\newtheorem{theorem}{Theorem}[section]
\newtheorem{lemma}[theorem]{Lemma}
\newtheorem{proposition}[theorem]{Proposition}
\newtheorem{corollary}[theorem]{Corollary}
\theoremstyle{definition}
\newtheorem{remark}[theorem]{Remark}
\numberwithin{equation}{section}
\newskip\aline \newskip\halfaline
\def\skipaline{\vskip\aline}
\def\qedbox{$\rlap{$\sqcap$}\sqcup$}
\def\qed{\nobreak\hfill\penalty250 \hbox{}\nobreak\hfill\qedbox\skipaline}
\def\proofend{\eqno{\mbox{\qedbox}}}
\newcommand{\one}{\mathbbm{1}}
\newcommand\bC{{\mathbb C}}
\newcommand\bR{{\mathbb R}}
\newcommand{\bT}{{\mathbb T}}
\newcommand\bZ{{\mathbb Z}}
\DeclareMathOperator{\tr}{{\rm tr}}
\DeclareMathOperator{\dist}{dist}
\DeclareMathOperator{\diag}{Diag}
\DeclareMathOperator{\spa}{span}
\DeclareMathOperator{\Sym}{\mathbf{Sym}}
\DeclareMathOperator{\Hess}{Hess}
\DeclareMathOperator{\var}{\boldsymbol{var}}
\DeclareMathOperator{\SO}{SO}
\DeclareMathOperator{\GOE}{GOE}
\newcommand{\be}{{\boldsymbol{e}}}
\newcommand{\bsf}{\boldsymbol{f}}
\newcommand{\ii}{\boldsymbol{i}}
\newcommand{\bp}{{\boldsymbol{p}}}
\newcommand{\bq}{{\boldsymbol{q}}}
\newcommand{\bu}{{\mathbf{u}}}
\newcommand{\bv}{{\boldsymbol{v}}}
\newcommand{\bx}{{\boldsymbol{x}}}
\newcommand{\bsD}{\boldsymbol{D}}
\newcommand{\bsE}{\boldsymbol{E}}
\newcommand{\bsH}{\boldsymbol{H}}
\newcommand{\bsI}{\boldsymbol{I}}
\newcommand{\bsJ}{\boldsymbol{J}}
\newcommand{\bsN}{\boldsymbol{N}}
\newcommand{\bsS}{\mathbf{S}}
\newcommand{\bsT}{\boldsymbol{T}}
\newcommand{\bsU}{{\mathbf{U}} }
\newcommand{\bsV}{\boldsymbol{V}}
\newcommand{\bsW}{\boldsymbol{W}}
\newcommand{\bgamma}{\boldsymbol{\gamma}}
\newcommand{\bGamma}{\boldsymbol{\Gamma}}
\newcommand{\bpi}{\boldsymbol{\pi}}
\newcommand{\bsi}{\boldsymbol{\sigma}}
\newcommand{\si}{{\sigma}}
\newcommand{\ve}{{\varepsilon}}
\newcommand{\vfi}{{\varphi}}
\newcommand{\eA}{\EuScript{A}}
\newcommand{\eC}{\EuScript{C}}
\newcommand{\eE}{\EuScript{E}}
\newcommand{\eG}{\EuScript{G}}
\newcommand{\eH}{\EuScript H}
\newcommand{\eN}{\EuScript{N}}
\newcommand{\eO}{\EuScript{O}}
\newcommand{\eQ}{\EuScript{Q}}
\newcommand{\eR}{\EuScript{R}}
\newcommand{\eS}{\EuScript{S}}
\newcommand{\eT}{\EuScript{T}}
\newcommand{\teE}{\widetilde{\mathscr{E}}}
\newcommand{\hra}{\hookrightarrow}
\newcommand{\Lra}{{\longrightarrow}}
\newcommand{\lan}{\langle}
\newcommand{\ran}{\rangle}
\def\inpr{\mathbin{\hbox to 6pt{\vrule height0.4pt width5pt depth0pt \kern-.4pt \vrule height6pt width0.4pt depth0pt\hss}}}
\newcommand{\pa}{\partial}
\newcommand{\dual}{\spcheck{}}
\newcommand{\reh}{\widetilde{\boldsymbol{H}}^\varepsilon}
\newcommand{\coh}{\widehat{\boldsymbol{H}}^\varepsilon}
\newcommand{\bbom}{\bar{\boldsymbol{\Xi}}}
\newcommand{\tom}{\widetilde{\Omega}}
\newcommand{\tsi}{\tilde{\sigma}}
\begin{document}

\title[Multidimensional random Fourier series]{Critical points  of  multidimensional  random Fourier series: variance estimates} 

%\date{Started  July 27, 2012. Completed  on  October 8, 2013. Last modified on {\today}. }

\author{Liviu I. Nicolaescu}

\address{Department of Mathematics, University of Notre Dame, Notre Dame, IN 46556-4618.}
\email{nicolaescu.1@nd.edu}
\urladdr{\url{http://www.nd.edu/~lnicolae/}}
\subjclass{28A33, 60D99, 60G15, 60G60}
\keywords{random Fourier series, critical points, energy landscape, Kac-Rice formula, asymptotics, Gaussian random matrices}

\begin{abstract}  We investigate the number of critical points  of a Gaussian  random smooth function $u^\ve$ on the $m$-torus  $T^m:=\mathbb{R}^m/\mathbb{Z}^m$. The randomness  is  specified  by a  fixed  nonnegative Schwartz  function $w$ on the real axis and  a small parameter $\varepsilon$ so that,  as $\varepsilon \to 0$, the random function $u^\ve$ becomes highly oscillatory  and converges in a special fashion to  the Gaussian white noise.   Let $N^\varepsilon$ denote the number of critical points of $u^\ve$. We  describe explicitly   in terms of $w$   two  constants $C, C'$ such that  as $\varepsilon$ goes to the  zero, the expectation  of the random variable $\ve^{-m}N^\ve$ converges to $C$, while its variance  is extremely small and   behaves like $C'\ve^{m}$.

\end{abstract}

\maketitle

\tableofcontents

\section{Introduction}
\setcounter{equation}{0}

\subsection{The setup} Consider the $m$-dimensional torus $\bT^m:=\bR^m/\bZ^m$  with  angular coordinates $\theta_1,\dotsc, \theta_m\in\bR/\bZ$  equipped with  the flat metric
\[
g:=\sum_{j=1}^m (d\theta_j)^2,\;\;{\rm vol}_g(\bT^m)=1.
\]
The eigenvalues of the corresponding  Laplacian $\Delta=-\sum_{k=1}^m \pa^2_{\theta_k}$ are
\[
(2\pi)^2|\vec{k}|^2,\;\;\vec{k}=(k_1,\dotsc, k_m)\in\bZ^m.
\]
For $\vec{\theta}=(\theta_1,\dotsc, \theta_m) \in\bR^m$ and $\vec{k}\in\bZ^m$ we set
\[
 \lan\vec{k},\vec{\theta}\ran =\sum_jk_j\theta_j. 
 \]
Denote by $\prec$ the lexicographic order on $\bR^m$.  An  orthonormal basis of $L^2(\bT^m)$ is given by the  functions $(\Psi_{\vec{k}})_{\vec{k}\in\bZ^m}$, where
\[
\Psi_{\vec{k}}(\vec{\theta}) =\begin{cases}1, &\vec{k}=0\\
\sqrt{2} \sin 2\pi\lan \vec{k},\vec{\theta}\ran, &\vec{k}\succ\vec{0}, \\
\sqrt{2}\cos 2\pi\lan\vec{k},\vec{\theta}\ran, &\vec{k}\prec\vec{0}. 
\end{cases}
\]
Fix a  nonnegative, even  Schwartz function $w\in \eS(\bR)$, set $w_\ve(t)=w(\ve t)$.  In particular, there exists   a unique Schwartz  function  $\vfi\in\eS(\bR)$ such that
\begin{equation}
w(t)=\vfi(t^2),\;\;\forall t\in\bR.
\label{eq: w-vfi}
\end{equation}
Consider the random  function
\begin{equation}
\bu^\ve(\vec{\theta})=\sum_{\vec{k}\in\bZ^m} X^\ve_{\vec{k}}\Psi_{\vec{k}}(\vec{\theta}),
\label{uve}
\end{equation}
where the coefficients $X^\ve_{\vec{k}}$ are independent  Gaussian random variables with mean $0$ and variances
\[
\var(X_{\vec{k}})= w_\ve(2\pi |\vec{k}|)=w(2\pi\ve|\vec{k}|). 
\]
For $\ve>0$ sufficiently small  the random function $\bu^\ve$ is  almost surely (a.s.) smooth and Morse.  By \emph{energy landscape}  of $\bu^\ve$ we  understand the catalogue containing the  basic information about the critical points of $\bu^\ve$: their location, their indices, and their corresponding critical values.   A first information about the energy landscape  concerns the number of the critical points.  Let $N(\bu^\ve)$ denote  the number of critical points  of $\bu^\ve$.  We denote   by $N_\ve$ the  expectation the  random variable $N(\bu^\ve)$,
\[
N_\ve :=\bsE\Bigl(\; N(\bu^\ve)\;\Bigr),
\]
and by $\var_\ve$ its variance
\[
\var^\ve=\bsE\Bigl(\, \bigl(\,N(\bu^\ve)-N_\ve\,\bigr)^2\,\Bigr)= \bsE\bigl(\, N(\bu^\ve)^2\,\bigr) -N_\ve^2.
\]
We  are interested in the the small $\ve$ behavior of the random variable $N_\ve$.  

To understand the analytic significance of the $\ve\to 0$ limit it is best to     think of the special case when $w$  ``approximates''  the characteristic function of the interval $[-1,1]$, i.e., it is  supported  in $[-1,1]$  and  it is identically $1$ in a neighborhood of $0$.   For fixed $\ve$, $\bu^\ve$ is a  trigonometric polynomial: the terms  corresponding to $|\vec{k}|>\ve$  do not contribute to $\bu^\ve$.    If we formally let $\ve\searrow 0$ in (\ref{uve}) , then we deduce that $\bu^\ve$ converges to the random Fourier series
\[
\sum_{\vec{k}\in\bZ^m} X^0_{\vec{k}}\Psi_{\vec{k}}(\vec{\theta}),
\]
where the coefficients $X^0_{\vec{k}}$ are independent, standard normal random variables with  mean $0$ and variance $1$.  

The  above  series is not convergent to any function in any meaningful way but, as explained in \cite{GeVi2},  it converges almost surely  in the sense of  distributions to a random  distribution on the torus, the  Gaussian white noise.   For $\ve>0$ very small,  the  trigonometric polynomial $\bu^\ve$ is highly oscillatory  and we expect it to have many critical points, i.e., $N_\ve\to\infty$ as $\ve\searrow  0$ .  In  \cite{Ncv2}  proved a more  precise statement,  namely 
\begin{equation}
N_\ve\sim  C_m(w) \ve^{-m}\bigl(1+O(\ve)\,\bigr)\;\;\mbox{as $\ve\searrow 0$},
\label{nve}
\end{equation}
where $C_m(w)$  is a certain  explicit constant positive  constant that depends only on $m$ and $w$.   In this paper we  describe the small $\ve$ asymptotics for  the variance of  the normalized random variable $\frac{1}{N_\ve} N(\bu^\ve)$. In particular,  we prove that this random variable  is highly concentrated around its mean.

\subsection{The main result} The goal of this paper is  an asymptotic formula (as $\ve\searrow 0$)  for  the variance of  the random variable  $N(\,\bu^\ve\,)$. 

\begin{theorem}  There exists a constant $C'_m(w)\geq 0$    such that
\begin{equation}
\var^\ve\sim C' _m(w) \ve^{-m}\bigl(\, 1+O(\ve^N)\,\bigr),\;\;\forall N>0,\;\;\mbox{as $\ve\searrow 0$}.
\label{eq: main}
\end{equation}
\label{th: main}
\end{theorem}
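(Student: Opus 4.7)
The plan is to apply the two-point Kac-Rice formula, use stationarity on $\bT^m$ to collapse the double integral, rescale at the microlocal scale $\ve$, and then extract both the leading constant and the super-polynomial remainder from the Schwartz hypothesis on $w$. Writing $N(\bu^\ve)^2 = N(\bu^\ve) + \sum_{p\ne q}1$ (the sum runs over ordered pairs of distinct critical points) and applying Kac-Rice at the second level gives
\begin{equation*}
\bsE\bigl[N(\bu^\ve)^2\bigr] = N_\ve + \int_{\bT^m\times\bT^m} K_2^\ve(\theta_1,\theta_2)\,d\theta_1 d\theta_2,
\end{equation*}
with two-point Kac-Rice density
\begin{equation*}
K_2^\ve(\theta_1,\theta_2) = \bsE\bigl[\,|\det\nabla^2\bu^\ve(\theta_1)|\,|\det\nabla^2\bu^\ve(\theta_2)|\,\bigm|\,\nabla\bu^\ve(\theta_1)=0=\nabla\bu^\ve(\theta_2)\bigr]\,p_{\nabla\bu^\ve(\theta_1),\nabla\bu^\ve(\theta_2)}(0,0).
\end{equation*}
Stationarity makes the one-point density constant and equal to $N_\ve$, and collapses the second-moment integral to $\int_{\bT^m}K_2^\ve(0,\theta)\,d\theta$. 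Subtracting $N_\ve^2$ yields the working identity
\begin{equation*}
\var^\ve = N_\ve + \int_{\bT^m}\bigl[K_2^\ve(0,\theta) - N_\ve^2\bigr]\,d\theta.
\end{equation*}

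Next I would rescale at the microlocal scale by introducing $U^\ve(x):=\ve^{m/2}\bu^\ve(\ve x)$ on $D_\ve := \ve^{-1}\bT^m$. Under this normalization $\nabla U^\ve$ and $\nabla^2 U^\ve$ have $O(1)$ variances, and the substitution $\theta = \ve x$ transforms the identity into
\begin{equation*}
\var^\ve = N_\ve + \ve^{-m}\int_{D_\ve}\bigl[\widetilde K_2^\ve(x) - (\widetilde K_1^\ve)^2\bigr]\,dx,
\end{equation*}
where $\widetilde K_2^\ve(x) := \ve^{2m}K_2^\ve(0,\ve x)$ and $\widetilde K_1^\ve := \ve^m N_\ve$ are the Kac-Rice densities of $U^\ve$. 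The rescaled field converges to a stationary Gaussian field $U^\infty$ on $\bR^m$ whose covariance is the Fourier transform of $W(\xi):=w(2\pi|\xi|)$; in particular $\widetilde K_1^\ve\to K_1^\infty = C_m(w)$ and $\widetilde K_2^\ve(x)\to K_2^\infty(x)$ locally uniformly. Combined with (\ref{nve}) this identifies the leading term
\begin{equation*}
C'_m(w) := K_1^\infty + \int_{\bR^m}\bigl[K_2^\infty(x) - (K_1^\infty)^2\bigr]\,dx,
\end{equation*}
whose nonnegativity follows from $\var^\ve\ge 0$ after dividing by $\ve^{-m}$ and letting $\ve\searrow 0$.

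The super-polynomial remainder $O(\ve^N)$ is fed by two independent inputs. The first is Poisson summation: since $w$ is even Schwartz, $W$ is Schwartz on $\bR^m$ and
\begin{equation*}
\sum_{\vec k\in\bZ^m}W(\ve\vec k)\,e^{2\pi i\lan\vec k,\theta\ran} = \ve^{-m}\sum_{\vec n\in\bZ^m}\widehat W\bigl(\ve^{-1}(\theta-\vec n)\bigr),
\end{equation*}
so inside a fundamental domain containing $0$ the $\vec n\ne 0$ contributions, and all their finite-order $\theta$-derivatives, are $O(\ve^N)$ for every $N$. This propagates through the Gaussian regression identities behind the Kac-Rice integrand to
\begin{equation*}
\widetilde K_2^\ve(x) = K_2^\infty(x) + O(\ve^N),\qquad \widetilde K_1^\ve = K_1^\infty + O(\ve^N),
\end{equation*}
uniformly on compact subsets of $\bR^m$ away from the origin. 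The second input is tail decay: the rapid decay of $K^\infty$ and its derivatives makes $(\nabla U^\infty(0),\nabla U^\infty(x))$ asymptotically independent, yielding $K_2^\infty(x) - (K_1^\infty)^2 = O(|x|^{-N})$ for every $N$, so the tail of the integral outside a large ball contributes $O(\ve^N)$ as well.

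The main obstacle is the delicate diagonal behavior at $x=0$, where the joint covariance of $(\nabla U^\ve(0), \nabla U^\ve(x))$ degenerates and the raw Kac-Rice integrand formally diverges. The standard remedy is Gaussian regression: decomposing $\nabla U^\ve(x) = A^\ve(x)\nabla U^\ve(0) + R^\ve(x)$ with $A^\ve(x) = I + O(|x|)$ and residual $R^\ve(x)$ independent of $\nabla U^\ve(0)$ and of order $|x|$, the near-singular Gaussian density at the diagonal is exactly compensated by a vanishing conditional expectation of $|\det\nabla^2 U^\ve(0)||\det\nabla^2 U^\ve(x)|$, so that $\widetilde K_2^\ve(x)$ extends continuously across $x=0$. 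Carrying this out rigorously requires control of the $C^4$ Taylor expansion of the covariance at $0$, which is available since $\vfi$ (and hence $w$) is smooth there. Once the diagonal is handled, uniform integrability, exchange of limits, and the $O(\ve^N)$ assembly are bookkeeping.
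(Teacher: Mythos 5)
Your high-level strategy coincides with the paper's: split off the second factorial moment via the two-point Kac--Rice formula, use translation invariance to reduce to a single integral in the difference variable, rescale by $\ve$ near the diagonal, and draw the $O(\ve^N)$ remainder from Poisson summation applied to the Schwartz function $w$ (which is precisely how the paper obtains the covariance (\ref{eq: cov-asy0}) in terms of $V^\ve$) together with the rapid decay of $V$ at infinity. The decomposition $\var^\ve = N_\ve + \int(\rho_2^\ve - \tilde\rho_1^\ve)$ and the identification of $C_m'(w)$ as the near-diagonal integral of the limiting difference density match equations (\ref{eq: mom22})--(\ref{eq: diff-mom}), (\ref{eq: delta0}) and (\ref{eq: punch}) in the text.

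There are, however, two concrete gaps. First, your assertion that the compensation between the degenerating Gaussian density and the vanishing conditional expectation makes $\widetilde K_2^\ve(x)$ ``extend continuously across $x=0$'' is false once $m\ge 3$. The Gaussian prefactor $(\det 2\pi\widetilde{\bsS}_\ve)^{-1/2}$ scales like $\dist(\Theta,\bsD)^{-m}$ (see (\ref{eq: det}) and (\ref{det_explode})), the conditional expectation vanishes only to order $\dist^2$ (see (\ref{eq: resc_cond_hess})--(\ref{eq: exp-growth})), and so the two-point density behaves like $\dist^{2-m}$ near the diagonal, as recorded in (\ref{2pointcorr}). This is locally integrable in dimension $m$, which is what one actually needs, but it is unbounded for $m\ge 3$; the continuity you claim would lead to a wrong picture of the diagonal singularity.

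Second, you treat the step from the regression decomposition $\nabla U^\ve(x) = A^\ve(x)\nabla U^\ve(0) + R^\ve(x)$ with $R^\ve = O(|x|)$ to the quadratic vanishing of the conditional expectation, and then to the $O(\ve^N)$ estimate near the diagonal, as ``bookkeeping.'' In the paper this is the hardest part. The quadratic vanishing does not follow automatically from the regression; one must first show that the conditional Hessian covariances with a repeated index along $\eta$ vanish at the diagonal (Lemma \ref{lemma: O2}, equation (\ref{eq: O3})), and then perform the anisotropic rescaling (\ref{eta_rescale}) of the conditional Hessian in which the $(1,1)$ entry is scaled by $|\eta|^{-1}$ and the $(1,j)$ entries by $|\eta|^{-1/2}$, so that the determinant factors as $|\eta|^2\det B^\eta$ with $B^\eta$ having a nondegenerate Gaussian limit. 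To turn the covariance estimates into $O(\ve^N)$ estimates on expectations, uniformly in $\eta$ as $\ve\to 0$, one also needs the local H\"older continuity of $\Gamma\mapsto\bsE_\Gamma(|\det A|)$ on Gaussian measures (Proposition \ref{prop: diff-gauss}). Your sketch signals awareness of the Taylor expansion at the covariance level, but does not identify either the required vanishing of specific conditional Hessian covariances or the anisotropic rescaling that makes the limit near the diagonal well defined; without those two ingredients the near-diagonal contribution to (\ref{eq: del_limb}) cannot be controlled uniformly in $\ve$.
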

Consider the normalized random variables
\[
\widehat{N}(\bu^\ve):=\frac{1}{N_\ve}N(\bu^\ve)=\frac{1}{\bsE(\,N(\bu^\ve)\,)}N(\bu^\ve).
\]

\begin{corollary}  If $(\ve_k)$ is a sequence of positive numbers such that $\sum_k \ve_k^m <\infty$, then   sequence of random  variables $\widehat{N}_{\ve_k}$ converges almost surely  to $1$.
\end{corollary}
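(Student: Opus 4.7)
The plan is short: combine the variance asymptotic (\ref{eq: main}) of Theorem~\ref{th: main} with the mean asymptotic (\ref{nve}) to see that $\widehat{N}(\bu^\ve)$ is $L^2$-close to $1$ with controlled rate, then upgrade this to almost-sure convergence along the subsequence $(\ve_k)$ via a Borel--Cantelli / Tonelli argument.

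First I would compute the $L^2$ deviation of $\widehat{N}(\bu^\ve)$ from its mean. Since $\bsE\bigl(\widehat{N}(\bu^\ve)\bigr)=1$, we have
\[
\bsE\Bigl(\,\bigl(\widehat{N}(\bu^\ve)-1\bigr)^2\,\Bigr)=\frac{\var^\ve}{N_\ve^2}.
\]
Plugging in (\ref{nve}) and (\ref{eq: main}) gives
\[
\frac{\var^\ve}{N_\ve^2}\sim \frac{C'_m(w)\,\ve^{-m}}{C_m(w)^2\,\ve^{-2m}}=\frac{C'_m(w)}{C_m(w)^2}\,\ve^{m},
\]
so there exists a constant $K>0$ (depending only on $m$ and $w$) and an $\ve_0>0$ such that for all $0<\ve<\ve_0$
\[
\bsE\Bigl(\,\bigl(\widehat{N}(\bu^\ve)-1\bigr)^2\,\Bigr)\le K\,\ve^m.
\]

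Next, since $\sum_k\ve_k^m<\infty$ forces $\ve_k\to 0$, only finitely many $\ve_k$ exceed $\ve_0$, so we may assume $\ve_k<\ve_0$ for all $k$. Then
\[
\sum_k \bsE\Bigl(\,\bigl(\widehat{N}(\bu^{\ve_k})-1\bigr)^2\,\Bigr)\le K\sum_k\ve_k^m<\infty.
\]
By Tonelli's theorem (applied to the nonnegative summands),
\[
\bsE\Bigl(\,\sum_k\bigl(\widehat{N}(\bu^{\ve_k})-1\bigr)^2\,\Bigr)<\infty,
\]
hence $\sum_k\bigl(\widehat{N}(\bu^{\ve_k})-1\bigr)^2<\infty$ almost surely. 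In particular the general term tends to $0$ almost surely, which is precisely $\widehat{N}(\bu^{\ve_k})\to 1$ a.s.

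The argument is essentially mechanical once Theorem~\ref{th: main} is in hand, so there is no real obstacle: the only delicate point is making sure the subsequence summability condition $\sum_k\ve_k^m<\infty$ matches the $O(\ve^m)$ decay of the normalized variance, which it does exactly. (One could equivalently invoke Chebyshev's inequality to bound $\bP(|\widehat{N}(\bu^{\ve_k})-1|>\delta)\le K\ve_k^m/\delta^2$ and then apply Borel--Cantelli along a countable sequence of $\delta$'s shrinking to $0$; the Tonelli route above is cleaner.)
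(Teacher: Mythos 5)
Your proposal is correct and follows essentially the same route as the paper: both first establish $\widehat{\var}^\ve = \var^\ve/N_\ve^2 \sim (C'_m(w)/C_m(w)^2)\,\ve^m$ from (\ref{nve}) and (\ref{eq: main}), and then use the summability of $\ve_k^m$ to deduce almost-sure convergence. The only difference is that you close with the Tonelli/$L^2$-summation version of the argument while the paper invokes Chebyshev plus Borel--Cantelli, but these are interchangeable forms of the same final step (and you note this yourself).
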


\begin{proof} The equalities  (\ref{nve}) and (\ref{eq: main}) imply that the    variance  $\widehat{\var}^\ve$ of  $\widehat{N}(\bu^\ve)$ is extremely small,
\begin{equation}
\widehat{\var}^\ve \sim\frac{C'_m(w)}{C_m(w)^2}\ve^m\;\;\mbox{as $\ve\searrow 0$}.
\label{eq: main2}
\end{equation}
Now conclude using  Chebysev's inequality  and the Borel-Cantelli's lemma.
\end{proof}

The constant $C_m'(w)$ has an explicit, albeit very complicated description.    Here is the gist of it.

Denote by $\Sym_m$ the space of real symmetric  $m\times m$ matrices.  The group $O(m)$ acts on $\Sym_m$ by conjugation and we denote by $\eG_m$ the space of $O(m)$-invariant  Gaussian measures on $\Sym_m$.  This is a $2$-dimensional convex cone  described explicitly in Appendix \ref{s: gmat}.     Then
\[
C_m(w)= \frac{1}{(2\pi d_m)^{\frac{m}{2}}}\left( \int_{\Sym_m} |\det A| \, \bGamma_{h_m,h_m}(dA)\right),
\]
where $h_m,d_m$ are certain momenta  of $w$ defined in (\ref{eq: sdh}) and the Gaussian measure $\bGamma_{h_m,h_m}\in\eG_m$ is described in (\ref{eq: gamauv}).

 For any $\eta\in\bR^m\setminus 0$ we denote by $O_\eta(m)$ the subgroup of $O(m)$ consisting of orthogonal maps which fix $\eta$.  We  denote by  $\eG_{m,\eta}$ the space of $O_\eta(m)$-invariant  Gaussian measures on $\Sym_m$. This is a $5$-dimensional convex cone described   explicitly in Appendix \ref{s: gmat}. We set $\Sym_m^{\times 2}=\Sym_m\oplus \Sym_m$ so that the elements in $B=\Sym_m^{\times 2}$  have the form $B=B^-\oplus B^+$, $B^\pm\in \Sym_m$.  
 
 The  constant $C'_m(w)$ is expressed  in terms of a  family of  Gaussian random matrices 
 \[
 B_\eta=B_\eta^-\oplus B_\eta^+\in\Sym_m^{\times 2}, \;\; \eta\in \bR^m\setminus 0. 
 \]
We denote by $\bbom^0(\eta)$ the distribution of $B_\eta$.   The Gaussian measure  $\bbom^0(\eta)$  has an explicit but quite complicated  description detailed in Appendix \ref{s: b}. 

 The  correspondence $\eta\mapsto\bbom^0(\eta)$   is equivariant with respect  to the action of $O(m)$ on $\bR^m$ and its diagonal action  on the space  of Gaussian  measures on $\Sym_m^{\times 2}$. The components $B_\eta^\pm$ are identically distributed, and their distributions are certain explicit Gaussian measures in $\eG_{m,\eta}$. These components \emph{are not independent},  but become less and less correlated  as $|\eta|\to\infty$. In fact, as $|\eta|\to \infty$ the Gaussian measure $\bbom^0(\eta)$ converges to the  product of the Gaussian measures $\bGamma_\infty=\bGamma_{h_m,h_m}\times \bGamma_{h_m,h_m}$.   We denote by $\bsE_{\bbom^0(\eta)}(|\det B|)$ the expectation of the   random variable $|\det B_\eta|$. 
 
 To each $\eta\in\bR^M\setminus 0$ we  associate  the symmetric $2m\times 2m$ matrix 
 \begin{equation}
 \eH(V, \eta)=\left[\begin{array}{cc}
 -\Hess(V,0)& -\Hess(V,\eta)\\
 -\Hess(V,\eta) &-\Hess(V,0)
 \end{array}
 \right]
 \label{hfeta}
\end{equation}
 where $\Hess(V,\eta)$ denotes the Hessian  at $\eta$ of the function $V:\bR^n\to\bR$,
 \[
 V(\xi)=\int_{\bR^m} e^{-\ii(\xi,x)} w(|x|)|dx|.
 \]
 We set
 \[
 K(\eta):=\frac{1}{\sqrt{\det 2\pi \eH(V,\eta)}}.
 \]
 Then
\[
 K(\eta)\sim const. |\eta|^{-m}\;\;\mbox{as $\eta\to 0$},
 \]
  and as $\eta\to \infty$  the  quantity $K(\eta)$ approaches rapidly  the constant 
  \[
  K(\infty)=\frac{1}{\det \bigl(\, 2\pi\Hess(V,0)\,\bigr)}.
  \]
  Then
 \[
 C_m'(w) = C_m(w) +\int_{\bR^m} \Bigr(\, K(\eta)\bsE_{\bbom^0(\eta)}\bigl(\,|\det B|\,\bigr)- K(\infty)\bsE_{\bGamma_\infty}\bigl(\,|\det B|\,\bigr)\,\Bigr) |d\eta|.
 \]
Let us point out that
\[
C_m(w)^2= K(\infty)\bsE_{\bGamma_\infty}\bigl(\,|\det B|\,\bigr).
\]
 The one-dimensional investigations  in \cite{GW, Ntrig} suggest  that $C_m'(w)>0$, but  all our attempts to proving this  were fruitless so far.

\subsection{Outline of the proof of Theorem \ref{th: main}.} \label{s: outline}
To help the  reader  better navigate the   many technicalities involved  in proving (\ref{eq: main}) we  describe in this section  the  bare-bones  strategy used in the proof of Theorem \ref{th: main}. 

Denote by $\bsD$ the  diagonal of $\bT^m\times \bT^m$
\[
\bsD=\bigl\{\,\Theta= (\vec{\theta},\vec{\vfi})\in\bT^m\times \bT^m;\;\;\vec{\theta}=\vec{\vfi}\,\bigr\}.
\]
We denote by  $\eN_{\bsD}$ the normal bundle of the embedding  $\bsD\hra \bT^m\times \bT^m$. For each $r>0$ sufficiently small   we denote by $\eT_r(\bsD)$ the tube of radius $r$ around $ \bsD$,
\[
\eT_r(\bsD)= \Bigl\{  \Theta\in\bT^m\times \bT^m;\;\;\dist\bigl(\,  \Theta,\bsD\,\bigr) < r\,\Bigr\},
\]
where $\dist$ denotes the geodesic   distance on  $\bT^m\times \bT^m$ equipped with the  product metric $g\times g$.

Denote by $B_r(\bsD)\subset \eN_{\bsD}$ the   radius $r$-disk bundle.  For $\hbar>0$  sufficiently small, the exponential map induces a diffeomorphism
\[
\exp: B_\hbar (\bsD)\to\eT_\hbar (\bsD).
\]
Fix such a $\hbar$. For $t>0$ we denote by $\eR_t:\eN_{\bsD} \to\eN_{\bsD}$ the rescaling by a factor of $t$ along the fibers of the normal bundle $\eN_{\bsD}$.  Thus, $\eR_t$ acts as multiplication by $t$ on each fiber of $\eN_{\bsD}$.

We can now   explain the  strategy.

\medskip

\noindent {\bf Step 1.} Using the Kac-Rice formula we produce a density $\rho_1^\ve(\vec{\theta})|d\vec{\theta}|$ on $\bT^m$ such that
\begin{equation}
N_\ve= \int_{\bT^m}  \rho_1^\ve(\vec{\theta})|d\vec{\theta}|.
\label{eq: KR1}
\end{equation}
Set 
\[
\tilde{\rho}_1^\ve\bigl(\,\Theta\,\bigr):=  \rho_1^\ve(\vec{\theta}) \rho_1^\ve(\vec{\vfi}).
\]
{\bf Step 2.}  Using the Kac-Rice formula we produce a density $\rho_2^\ve(\Theta)|d\Theta|$ on $\bT^m\times \bT^m\setminus \bsD$ such that
\begin{equation}
\underbrace{\bsE\bigl(\, N(\bu^\ve)^2- N(\bu^\ve)\,\bigr)}_{=:\mu^\ve_{(2)}}=\int_{\bT^m\times \bT^m\setminus \bsD} \rho_2^\ve(\Theta)|d\Theta|.
\label{eq: KR2}
\end{equation}
As an aside, let us point out that the ratio
\[
\frac{\rho_2^\ve(\Theta)}{\tilde{\rho}_1^\ve\bigl(\,\Theta\,\bigr)}=\ve^{2m}\frac{\rho_2^\ve(\Theta)}{C_m(w)^2}
\]
is the  so called \emph{two-point correlation function} of the set of critical points of the random function $\bu^\ve$.

The second combinatorial moment $\mu^\ve_{(2)}$ of $N(\bu^\ve)$ is related to the variance $\var^\ve$ via the equality
\begin{equation}
\var^\ve=\mu^\ve_{(2)}-N_\ve^2 + N_\ve.
\label{eq: mom22}
\end{equation}
In view of (\ref{nve}) the  asymptotic estimate (\ref{eq: main}) is equivalent to the existence  of a  real constant $c$ such that
\begin{equation}
\lim_{\ve\searrow 0} \ve^m\bigl(\,\mu_{(2)}^\ve-N_\ve^2\,\bigr)=c.
\label{eq: main3}
\end{equation}
Set
\[
\delta^\ve(\Theta):=\rho_2^\ve(\Theta)- \tilde{\rho}_1^\ve(\Theta).
\]
 Using (\ref{eq: KR1}) and (\ref{eq: KR2}) we   can rewrite
\begin{equation}
\mu_{(2)}^\ve-N_\ve^2= \underbrace{\int_{\eT_\hbar(\bsD)\setminus \bsD} \delta^\ve(\Theta) |d\Theta|}_{=:\bsI_0(\ve)}+\underbrace{\int_{\bT^m\times\bT^m\setminus \eT_\hbar(\bsD)} \delta^\ve(\Theta) |d\Theta|}_{=:\bsI_1(\ve)}.
\label{eq: diff-mom}
\end{equation}
{\bf Step 3. Off-diagonal estimates.}   We show that
\begin{equation}
\lim_{\ve\searrow 0} \ve^{-N} \bsI_1(\ve)=0,\;\;\forall  N>0.
\label{eq: lim1}
\end{equation}
\noindent {\bf Step 4. Near-diagonal estimates.} Denote by $\beta_\ve$ the composition
\[
\beta_\ve: B_{\hbar/\ve}(\bsD)\stackrel{\eR_\ve}{\Lra} B_\hbar(\bsD)\stackrel{\exp}{\Lra} \eT_\hbar(\bsD).
\]
Then
\[
\ve^m \bsI_0(\ve)= \int_{B_{\hbar/\ve}(\bsD)\setminus \bsD}\delta^\ve\bigl(\beta_\ve(\eta)\,\bigr) |d\eta|,
\]
and we  will show that the limit
\begin{equation}
 \lim_{\ve\searrow 0}\int_{B_{\hbar/\ve}(\bsD)\setminus \bsD}\delta^\ve\bigl(\beta_\ve(\eta)\,\bigr) |d\eta|
\label{eq: lim0}
\end{equation}
exists and it is finite. It boils down to showing that the function
\[
\hat{\delta}^\ve:\eN_{\bsD}\setminus \bsD\to \bR,\;\;\hat{\delta}^\ve(\eta)=\begin{cases}
\delta^\ve\bigl(\beta_\ve(\eta)\,\bigr), & \eta\in B_{\hbar/\ve}(\bsD)\setminus \bsD\\
0, &\mbox{otherwise},
\end{cases}
\]
converges as $\ve\searrow 0$ to  an \emph{integrable} function $\hat{\delta}^0:\eN_{\bsD}\to\bR$ and
\[
\lim_{\ve\searrow 0} \int_{\eN_{\bsD}}\hat{\delta}^\ve(\eta)|d\eta|= \int_{\eN_{\bsD}}\hat{\delta}^0(\eta)|d\eta|.
\]
This last step is the most challenging part of the proof.      A big part of the challenge is the fact that $\hat{\delta}^\ve(\eta)$  has a singularity  along the  zero section of $\eN_{\bsD}$.

At this point  we think it is useful  to  provide  a few more details to give  the reader a better sense of the complexity of the problem. Define a new random function
\[
\bsU^\ve:\bT^m\times \bT^m\to \bR,\;\;\bsU_\ve(\vec{\theta},\vec{\vfi})=\bu^\ve(\vec{\theta})+\bu^\ve(\vec{\vfi}). 
\]
We denote  by $N(\bsU_\ve)$ the number of critical points of $\bsU_\ve$ situated  outside the diagonal. Note that
\[
N(\bsU^\ve)= N(\bu^\ve)^2-N(\bu^\ve)
\]
and thus
\[
\bsE\bigl(\, N(\bsU^\ve)\,\bigr)=\mu^\ve_{(2)}. 
\]
The Kac-Rice formula, detailed in Section \ref{s: kac-rice}, shows that 
\begin{equation}
\bsE\bigl(\, N(\bsU^\ve)\,\bigr)=\int_{\bT^m\times \bT^m\setminus \bsD}\;\underbrace{ \frac {1}{\sqrt{ \det  2\pi\widetilde{\bsS}_\ve(\Theta)}} \bsE\Bigl(\, \bigr|\det \Hess \bsU^\ve(\Theta)\bigr|\,\Bigl|\, d\bsU^\ve(\Theta)=0\,\Bigr)}_{=:\rho_2^\ve(\Theta)}\, |d\Theta|,
\label{eq: combmom}
\end{equation}
where $\widetilde{\bsS}_\ve(\Theta)$ is a  symmetric, positive semidefinite $2m\times 2m$ matrix  describing the covariance form of the  random vector $d\bsU^\ve(\Theta)$, and $\bsE(X|Y=0)$ denotes the conditional expectation of the random variable $X$ given that $Y=0$.    

The matrix  $\widetilde{\bsS}_\ve(\Theta)$ becomes singular along the diagonal because the two components $d\bu^\ve(\vec{\theta})$ $d\bu^\ve(\vec{\vfi})$ become \emph{dependent}  random  vectors for $\vec{\theta}=\vec{\vfi}$.     What is worse, one can show that   $\det  \widetilde{\bsS}_\ve(\Theta)$   behaves like $\dist(\Theta,\bsD)^{2m}$ near $\bsD$. Hence the term
\[
\Theta\mapsto \frac {1}{\sqrt{ \det 2\pi \widetilde{\bsS}_\ve(\Theta)}}
\]
is not integrable near the diagonal.  For our strategy to work  we need that the second term 
\[
\Theta\mapsto \bsE\Bigl(\, \bigr|\det \Hess \bsU^\ve(\Theta)\bigr|\,\Bigl|\, d\bsU^\ve(\Theta)=0\,\Bigr)
\]
 vanish along the diagonal $\bsD$.   Let us give a heuristic argument why this is to be expected.  
 
The mean value theorem shows that 
\[
\frac{1}{|\vec{\vfi}-\vec{\theta}|}\bigl(\,d\bu^\ve(\vec{\vfi}) -d\bu^\ve(\vec{\theta})\,\bigr)= \frac{1}{|\vec{\vfi}-\vec{\theta}|}\int_0^{|\vec{\vfi}-\vec{\theta}|} \Hess \bu^\ve( \vec{\theta}+t\eta)\eta \, dt,
\]
where
\[
\eta=\eta(\Theta) :=\frac{1}{|\vec{\vfi}-\vec{\theta}|}(\vec{\vfi}-\vec{\theta}).
\]
 If we take into account the condition $d\bsU^\ve(\vec{\theta},\vec{\vfi})=0$, i.e., $d\bu^\ve(\vec{\vfi}) = d\bu^\ve(\vec{\theta})=0$, then we deduce
\[
\frac{1}{|\vec{\vfi}-\vec{\theta}|}\int_0^{|\vec{\vfi}-\vec{\theta}|} \Hess \bu^\ve( \vec{\theta}+t\eta)\eta dt=0.
\]
If we let $\vec{\vfi}\to\vec{\theta}$,  so that $\eta(\vec{\theta},\vec{\vfi})$  stays fixed,  i.e., $\Theta$ approaches the diagonal along a fixed direction given by the unit vector $\eta$,   we deduce  that    the linear operator $\Hess \bu^\ve( \vec{\theta}+t\eta)$ admits in the limit $t\searrow 0$  a one-dimensional kernel.  In particular, the Hessian $\Hess \bsU^\ve(\Theta)$ conditioned   by requirement $d\bsU^\ve(\Theta)=0$, ought to approach a linear operator with  a two dimensional kernel because 
\[
\Hess \bsU^\ve(\Theta)=\Hess \bu^\ve(\vec{\theta})\oplus \Hess \bu^\ve(\vec{\vfi}).
\]
We do not know how to  transform this heuristic argument into a rigorous one, but we can prove by analytic means  that near the diagonal  we have (see (\ref{eq: resc_cond_hess}) and (\ref{eq: exp-growth}) )
\[
 \bsE\Bigl(\, \bigr|\det \Hess \bsU^\ve(\Theta)\bigr|\,\Bigl|\, d\bsU^\ve(\Theta)=0\,\Bigr) \sim const.\dist(\Theta,\bsD)^2.
 \]
This  guarantees that the integrand $\rho_2^\ve(\Theta)$ in (\ref{eq: combmom}) is integrable  near the diagonal because
\begin{equation}\label{2pointcorr}
\rho_2^\ve(\Theta)\sim const.\dist(\Theta,\bsD)^{2-m}.
\end{equation}
 With a bit of work one can show that the integrand does not explode anywhere away from the diagonal so  the integral  in (\ref{eq: combmom}) is finite. There is an added complication because we are actually interested in the  singular limit $\ve\searrow 0$ so that  we need to produces estimates that are uniform   in $\ve$ small.  Alas, this  is  not the only obstacle.

The arguments  described above  lead  to the conclusion that 
\[
\mu_{(2)}^\ve\sim C_m(w)^2\ve^{-2m},
\]
where $C_m(w)$ is the constant in (\ref{nve}). On the other hand $N_\ve\sim  C_m(w)\ve^{-m}$   so that
\[
\mu_{(2)}^\ve-N_\ve^2=o(\ve^{-2m}).
\]
To prove Theorem  \ref{th: main} we  need to substantially improve this  estimate to an estimate  of the  form
\[
\mu_{(2)}^\ve-N_\ve^2\sim Z\ve^{-m}
\]
for some real constant $Z$. This is where the usage of the  singular rescaling maps $\beta_\ve$   saves the day.

\subsection{Related results}  There  has been considerable work on the statistics of zero sets  of random functions or sections.  The  simplest invariant of such a set is its volume. In particular, if the zero set is zero dimensional, its volume  coincides with its cardinality.  The Kac-Rice  formula leads rapidly  to    information  about the expectation of the volume   of such a random set.    Higher order information, such as the variance it is typically harder to obtain.

In the complex  case   B. Shiffman and S. Zeldich  \cite{SZ1,SZ2} have obtained rather precise information  on the variance of the number  of simultaneous zeros    of $m$ independent random sections of a positive  holomorphic line bundle over an $m$-dimensional K\"{a}hler manifold.

The statistics of  the zero set of a random eigenfunction  of the Laplacian on a flat torus  were investigated by Z. Rudnick and I. Wigman in \cite{RW}. In particular, in this paper the authors  produce upper estimates on the variance of the volume of the zero set of such a random eigenfunction   leading to  concentration results very similar to the one  we obtain in the present paper.  In  the case of two-dimensional tori, M. Krishnapur, P. Kurlberg and I. Wigman \cite{KKW} have refined  the above upper estimate to a precise  asymptotic estimate.    

The statistics of the volume of the zero set of a random eigenfunction on the round $m$-dimensional sphere were investigated  by I. Wigman in \cite{W1}.  In particular, he proves    upper estimates for the variance  leading to concentration results.        In the paper \cite{W2}  he  consider the special case of the $2$-sphere and  describes  exact asymptotic estimates   for the variance of the volume of the zero set of a random eigenfunction corresponding to a large eigenvalue.

Recently, Bleher, Homma and Roeder  \cite{BHR} proved a counterpart  of (\ref{2pointcorr})  for  the two-point correlations functions   determined by  the solutions of random \emph{real}  polynomial systems of several \emph{real} variables. 

A different  different type of concentration result  is discussed  by E. Subag in \cite{Sub}, where the author investigates  the behavior  of the energy  landscapes of certain random functions on the round sphere $S^n$ as $n\to\infty$. 

\subsection{Organization of the paper} In the short Subsection \ref{s: kac-rice}    we present the  version of the Kac-Rice formula we use to compute  expectations of the number of critical points of various random functions.  The rest of  Section  \ref{s: rho1} contains  a more detailed  analysis of the correlation kernel of $\bu^\ve$ together with an explicit decryption of the density $\rho_1^\ve$ that computes $N_\ve$.

Section \ref{s: rho2} is devoted to the computation of the  density $\rho_2^\ve$ on $\bT^m\times \bT^m\setminus \bsD$. This  density   is expressed  in terms  of two quantities.

\begin{itemize}

\item The covariance kernel of the random field defined by the differential of the random function
\[
\bsU^\ve: \bT^m\times\bT^m\to\bR, \;\;\bsU^\ve(\vec{\theta},\vec{\vfi})=\bu^\ve(\vec{\theta})+\bu^\ve(\vec{\vfi}).
\]
\item The   conditional Hessian of $\bsU^\ve$ given that  $d\bsU^\ve=0$.

\end{itemize}

The Gaussian random vector $d\bsU^\ve(\vec{\theta},\vec{\vfi})$ degenerates along the diagonal and  in  Subsections \ref{ss: 53}, \ref{ss: 54} we    investigate in great detail  this degeneration.  The  statistics of the above conditional Hessians are described in  Subsection \ref{ss: 55}.  Suitably rescaled, these conditional Hessians  have a limit as $\ve\to 0$. This limit is described in Appendix \ref{s: b}.  In  Subsection \ref{ss: 56}   we  give an explicit description of the density $\rho_2^\ve$.  The behavior of this density away from the diagonal  is described  in Subsection \ref{ss: 61}. The   behavior of $\rho_2^\ve$ near the diagonal is investigated in Subsection \ref{ss: 62}, \ref{ss: 63}.   We complete the proof of Theorem \ref{th: main} in Subsection \ref{ss: 64}.

Throughout the paper  we use frequently  the following special case of  Proposition \ref{prop: diff-gauss} in Appendix \ref{s: a}: if $V$ is a finite dimensional Euclidean space, $\Sym(V)$ is the space of symmetric operators on $V$ and $\eG$ is the set of centered Gaussian measures on $\Sym(V)$, then the map 
\[
\eG\ni \Gamma\mapsto \bsE_\Gamma(|\det A|)\in \bR
\]
is locally H\"{o}lder continuous with H\"{o}lder exponent $\frac{1}{2}$. The various Gaussian ensembles of  symmetric matrices  that appear   in the proof are described  in great detail  in Appendix \ref{s: gmat}.

\section{The density $\rho_1^\ve$}
\label{s: rho1}
\setcounter{equation}{0}

\subsection{The Kac-Rice formula}\label{s: kac-rice}

For the reader's convenience  we   give a brief  description of the Kac-Rice formula  used in the proof of Theorem \ref{th: main}. For  proofs and many more details we refer to \cite{AT, AzWs}.

Suppose that $(X,g)$ is a smooth, connected Riemann manifold of dimension $n$ and $u:X\to \bR$ is a Gaussian random  function with covariance kernel
\[
\eE:X\times X\to\bR,\;\;\eE(p,q)= \bsE\bigl(\,u(p)\cdot u(q)\,\bigr).
\]
Under certain explicit conditions on $\eE$  (satisfied  in the  the situations we are interested in) the random function $u$ is almost surely (a.s.)  smooth and Morse.       Assume  therefore that $u$ is  a.s. smooth and Morse.   For any  precompact open  set $\eO\subset X$ we denote by $N(u, \eO)$ the number of critical  points  of $u$ in $\eO$. This is a random variable   whose expectation $N(\eO)$ is given by the Kac-Rice formula. 

To state this formula we need to introduce a bit of terminology.  Fix a point $p\in X$ and  normal coordinates $(x^1,\dotsc, x^n)$ at $p$ so that $x^i(p)=0$, $\forall i$.  Thus in the neighborhood of $(p,p)\in X\times X$ we can view $\eE$ as a function of two (sets of) variables $\eE=\eE(x,y)$.

The differential  of $u$ at $p$ is a Gaussian random  vector $du(\bp)\in T^*_pX$   described by its covariance form
\[
S_p(du): T_\bp X\times T_p X\to \bR.
\]
This is  a   symmetric nonnegative  definite  form  described   in the  chosen coordinates by the $n\times n$ matrix $(S(du(p) )_{ij})$, where
\[
S_p(du)_{ij}= \bsE\bigl(\,\pa_{x^i}u(p)\pa_{x^j}u(p)\,\bigr)= \frac{\pa^2}{\pa x^i\pa y^j}\eE(x,y)|_{x=y=0}.
\]
We will assume that $S_p(du)$ is actually \emph{positive} definite.

The  Hessian of $u$ at $p$  is the linear operator  $\Hess u(p): T_p M\to T_pM$ which in the above coordinates is described by the symmetric matrix $(\Hess_{ij} (p))_=(\pa^2_{x^ix^j}u(p)\,)_{1\leq i,j\leq n}$. It is a Gaussian random  symmetric matrix described by the covariances
\[
 \bsE\bigl(\,\Hess_{ij}(p)\cdot\Hess_{k\ell}(p)\,\bigr)= \frac{\pa^4}{\pa x^i\pa x^j\pa y^k \pa y^\ell}\eE(x,y)_{x=y=0}.
 \]
  The \emph{Kac-Rice formula}    states that
\[
N(\eO)=\int_{\eO} \rho_u(p) |dV_g(p)|,
\]
where
\[
\rho_u(p)=\frac{1}{\sqrt{\det 2\pi S_p(du)}} \bsE\Bigl(\, \;\bigr|\,\det \Hess u(p)\,\bigr|\;\Bigr|\;  du(p)=0\,\Bigr).
\]
The \emph{regression formula}   reduces the  computation of the above conditional expectation to the computation of  an  absolute expectation $\bsE( |\det A_p|\,)$ where $A_p$ is  another  Gaussian random symmetric matrix. The Gaussian distribution  governing this new random   matrix can be expressed explicitly  in terms  of the covariance form of  $\Hess u(p) $ and the correlations between $du(p)$ and $\Hess u(p)$.

\subsection{The covariance kernel of ${\bu}^\ve$.} A simple computation shows that the covariance kernel of the random function  $\bu^\ve$ is
\begin{equation}
\eE^\ve(\vec{\theta},\vec{\vfi}):=\bsE\bigl(\,\bu^\ve(\theta)\cdot\bu^\ve(\vfi)\,\bigr)=\sum_{\vec{k}\in\bZ^m }w(2\pi\ve|\vec{k}|)e^{-2\pi\ii\lan\vec{k}, \vec{\vfi}-\vec{\theta}\ran}. 
\label{eq: cov0}
\end{equation}
Set $\vec{\tau}:=\vec{\vfi}-\vec{\theta}$ and define $\phi:\bR^m\to\bC$ by
\[
\phi(\vec{x}):=e^{-\ii\lan\vec{x},\frac{1}{\ve}\vec{\tau}\ran}w(|\vec{x}|).
\]
We deduce that
\[
\eE^\ve(\vec{\theta}, \vec{\vfi})=\sum_{\vec{k}\in\bZ^m}  \phi(2\pi\ve\vec{k}). 
\]
Using Poisson formula \cite[\S 7.2]{H1} we deduce  that for any $a>0$ we have
\[
\sum_{\vec{k}\in\bZ^m}\phi\left(\frac{2\pi}{a}\vec{k}\right)=\left(\frac{a}{2\pi}\right)^m \sum_{\vec{\nu}\in\bZ^m}\widehat{\phi}(a\vec{\nu}), 
\]
where  for any $u\in\eS(\bR^m)$ we denote by $\widehat{u}(\xi)$ its Fourier transform
\[
\widehat{u}(\xi)=\int_{\bR^m} e^{-\ii\lan\xi,\vec{x}\ran} u(\vec{x})|d\vec{x}|. 
\]
If we let  $a=\ve^{-1}$, then we deduce
\[
\eE^\ve(\vec{\theta},\vec{\vfi})=\frac{1}{(2\pi\ve)^m} \sum_{\vec{\nu}\in\bZ^m}\widehat{\phi}\left(\ve\vec{\nu}\right). 
\]
Define $V:\bR^m\to \bR$ by
\begin{equation}\label{vxi}
V(\xi): =\int_{\bR^m} e^{-\ii\lan\xi,\vec{x}\ran} w(|\vec{x}|)\, |d\vec{x}|.
\end{equation}
Then
\[
\widehat{\phi}(\xi)=V\Bigl(\;\xi+\frac{1}{\ve}\vec{\tau}\;\Bigr)=V\Bigl(\;\xi+\frac{1}{\ve}(\vec{\vfi}-\vec{\theta})\,\;\Bigr). 
\]
Hence
\[
\eE^\ve(\vec{\theta},\vec{\vfi})= \frac{1}{(2\pi\ve)^m}\sum_{\vec{\nu}\in\bZ^m}V\left(\,\frac{1}{\ve}\vec{\tau}+\frac{1}{\ve}\vec{\nu}\right).
\]
We set
\begin{equation}
V^\ve(\vec{\theta}):=\sum_{\vec{\nu}\in\bZ^m} V\left(\,\vec{\theta}+\frac{1}{\ve}\vec{\nu}\,\right).
\label{eq: vve}
\end{equation}
We deduce that
\begin{equation}
\eE^\ve(\vec{\theta},\vec{\vfi}) = \frac{1}{(2\pi\ve)^m} V^\ve\left(\frac{1}{\ve}\tau\right).\label{eq: cov-asy0}
\end{equation}
From the special  form (\ref{eq: vve}) of $V^\ve$ and the  fact that $V$ is a Schwartz function we deduce  that for any positive integers $k, N$ and any $R>0$ we have
 \begin{equation}
 \Vert V^\ve-V\Vert_{C^k(B_R(0))}=O(\ve^N)\;\;\mbox{as $\ve\searrow 0$},
 \label{eq: vve_appr}
 \end{equation}
where $B_R(0)$ denotes the  open ball of radius $R$  in $\bR^M$ centered at the origin.

\begin{remark} We define
 \begin{equation}
 \begin{split}
 s_m:=\int_{\bR^m} w(|x|) dx,\;\;d_m:= \int_{\bR^m} x_1^2w(|x|) dx,\\
 h_m:= \int_{\bR^m} x_1^2x_2^2w(|x|) dx.
  \end{split}
 \label{eq: sdh}
 \end{equation}
For any multi-index $\alpha$ we have
  \[
 \int_{\bR^m}w(|x|)x^{\alpha} dx =\left(\int_{|x|=1} x^{\alpha} dA(x)\right)\underbrace{\left(\int_0^\infty w(r) r^{m+|\alpha|-1} dr\right)}_{=:I_{m,|\alpha|}(w)}.
 \]
 On the other hand, according to \cite[Lemma 9.3.10]{N0} we have
 \begin{equation}
\int_{|x|=1} x^{\alpha} dA(x)=Z_{m,\alpha}:=\begin{cases}
\frac{2\prod_{i=1}^k\Gamma(\frac{\alpha_i+1}{2})}{\Gamma(\frac{m+|\alpha|}{2})}, &\alpha\in (2\bZ_{\geq 0})^m,\\
0, & {\rm otherwise.}
\end{cases}
\label{eq: zmab}
\end{equation}
Note that 
\[
\int_{\bR^m}w(|\vec{x}|) x_1^2x_2^2 \,|d\vec{x}|=\Gamma\left(\frac{3}{2}\right)^2\frac{2\prod_{k=3}^m\Gamma(\frac{1}{2})}{\Gamma(2+\frac{m}{2})} I_{m,4}(w),
\]
\[
\int_{\bR^m}w(|\vec{x}|) x_1^4\,|d\vec{x}|= \Gamma\left(\frac{5}{2}\right)\frac{2\prod_{k=2}^m\Gamma(\frac{1}{2})}{\Gamma(2+\frac{m}{2})} I_{m,4}(w)=3 \int_{\bR^m}w(|\vec{x}|) x_1^2x_2^2 \,|d\vec{x}|.
\]
We deduce from the above computations that
\begin{subequations}
\begin{equation}
\pa^2_{\xi_i\xi_j}V(0)= - \int_{\bR^m}x_ix_j w(|\vec{x}|)\,|d\vec{x}|= -d_m\delta_{ij}
\label{eq: deriv1}
\end{equation}
\begin{equation}
\pa^2_{\xi_i\xi_j\xi_k\xi_\ell}V(0)=\int_{\bR^m}x_ix_j x_kx_\ell w(|\vec{x}|)\,|d\vec{x}| =h_m(\delta_{ij}\delta_{k\ell}+\delta_{ik}\delta_{j\ell}+ \delta_{i\ell}\delta_{jk}).
\label{eq: deriv2}
\end{equation}
\end{subequations}

\qed
\end{remark}

For  $\ve \geq 0$,  an orthornormal basis $(\be_1,\dotsc,\be_m)$ of $\bR^m$, a multi-index $\alpha\in \bZ_{\geq 0}^m$  and $\eta\in\bR^m$, we set
\begin{equation}
V_\alpha(\eta):=(\pa^{\alpha_1}_{\be_1}\cdots \pa^{\alpha_m}_{\be_m} V)(\eta),\;\;V^\ve_\alpha(\eta):=( \pa^{\alpha_1}_{\be_1}\cdots \pa^{\alpha_m}_{\be_m} V^\ve)(\eta).
\label{eq: VM}
\end{equation}
Note that $V(\xi)$ is radially symmetric and   can be written as $f(|\xi|^2/2)$ ,  for some Schwartz function $f\in\eS(\bR)$,
 \begin{equation}
f\bigl(\, |\xi|^2/2\,\bigr)=V(\xi)= \int_{\bR^m} e^{-\ii\lan\xi,\vec{x}\ran} w(|\vec{x}|)\, |d\vec{x}|.
\label{f}
\end{equation}
We have
\begin{subequations}
\begin{equation}
V_i(\eta)= \eta_i f'\Bigl(\,\frac{|\eta|^2}{2}\,\Bigr),
\label{eq: d1}
\end{equation}
\begin{equation}
V_{i,j}(\eta)= \delta_{ij} f'\Bigl(\, \frac{|\eta|^2}{2}\,\Bigr) +\eta_i\eta_j f''\Bigl(\, \frac{|\eta|^2}{2}\,\Bigr),
\label{eq: d2}
\end{equation}
\begin{equation}
V_{i,j,k}(\eta)= \bigl(\; \delta_{ij}\eta_k+\delta_{ik}\eta_j+\delta_{jk}\eta_i\;\bigr)f''\Bigl(\, \frac{|\eta|^2}{2}\,\Bigr)+\eta_i\eta_j\eta_k f'''\Bigl(\, \frac{|\eta|^2}{2}\,\Bigr),
\label{eq: d3}
\end{equation}
\begin{equation}
\begin{split}
V_{i,j,k,\ell}(\eta) = \bigl(\;\delta_{ij}\delta_{k\ell}+\delta_{ik}\delta_{j\ell}+\delta_{jk}\delta_{i\ell}\;\bigr) f'' \Bigl(\, \frac{|\eta|^2}{2}\,\Bigr) & \\
 + \bigl(\; \delta_{ij}\eta_k\eta_\ell+\delta_{ik}\eta_j\eta_\ell+\delta_{jk}\eta_i\eta_\ell+\delta_{i\ell}\eta_j\eta_k+\delta_{j\ell}\eta_i\eta_k+\delta_{k\ell}\eta_i\eta_j\;\bigr) f'''\Bigl(\, \frac{|\eta|^2}{2}\,\Bigr) &  \\
+\eta_i\eta_j\eta_k\eta_\ell f^{(4)}\bigl(\,\frac{|\eta|^2}{2} \,\bigr) &. 
\label{eq: d4}
\end{split}
\end{equation}
\end{subequations}
 In particular,
\begin{equation}
s_m=f(0),\;\; d_m= -f'(0),\;\; h_m= f''(0).
\label{eq: sdh1}
\end{equation}

\subsection{The Kac-Rice formula for $N_\ve$}    Fix an orthonormal basis $\be_1,\dotsc,\be_m$ of $\bR^m$ and set $\pa_i=\pa_{\be_i}$  and
\[
\bu^\ve_{i_1,\dotsc, i_k}:=\pa_{i_1}\cdots \pa_{i_k}\bu^\ve.
\]
We denote by $\bsS_\ve(\vec{\theta})$ the covariance form of the vector  $d\bu^\ve(\vec{\theta})$, i.e. the  symmetric matrix 
\[
\bsS_\ve(\vec{\theta}) =\Bigl( \bsE\bigl(\, \bu_i^\ve(\vec{\theta})\cdot \bu_j^\ve(\vec{\theta})\,\bigr)\,\Bigr)_{1\leq i,j\leq m}=\Bigl( \,  \pa^2_{\theta_i\vfi_j}\eE^\ve(\vec{\theta},\vec{\vfi})|_{\vec{\theta}=\vec{\vfi}}\,\Bigr)_{1\leq i,j\leq m}.
\]
Using (\ref{eq: cov-asy0}) we deduce
\[
\pa^2_{\theta_i\vfi_j}\eE^\ve(\vec{\theta},\vec{\vfi})|_{\vec{\theta}=\vec{\vfi}}= -(2\pi)^{-m}\ve^{-m-2}V^\ve_{i,j}(0).
\]
Note that
\[
V^\ve_{i,j}(0)=V^0_{i,j}(0) + O(\ve^N),\;\;\forall N.
\]
For any $\eta\in\bR^m$  and any smooth function $F:\bR^m\to \bR$  we denote by $\bsH(F,\eta)$ the Hessian of $F$ at $\eta$, so that
\[
(2\pi)^m \ve^{m+2}\bsS_\ve(\vec{\theta})= \bsH(-V^\ve,0).
\]
We denote by $\si^\ve_{ij}$ the entries of $\bsH(-V^\ve, 0)^{-1}$ and by $\check{s}_{ij}^\ve$ the entries of $\bsS_\ve(\vec{\theta})^{-1}$ so that
\[
\check{s}_{ij}^\ve=(2\pi)^m\ve^{m+2}\si_{ij}^\ve. 
\]
Then the Kac-Rice formula \cite{AT} (together with the explanations in \cite{Ncv2}) show that
\begin{equation}
\begin{split}
N_\ve =\frac{1}{(2\pi)^{\frac{m}{2}}}\int_{\bT^m} \bigl(\det \bsS_\ve(\vec{\theta})\,\bigr)^{-\frac{1}{2}} \bsE\Bigl(\, |\det \bsH(\bu^\ve,\vec{\theta}\,)|\,\bigr|\, d\bu^\ve(\vec{\theta})=0\,\Bigr) |d\vec{\theta}|\\
=\frac{(2\pi)^{\frac{m^2}{2}}\ve^{\frac{m(m+2)}{2}}}{(2\pi)^{\frac{m}{2}}}\int_{\bT^m} \bigl(\det\bsH(-V^\ve,0)\,\bigr)^{-\frac{1}{2}} \bsE\Bigl(\, |\det \bsH(\bu^\ve,\vec{\theta})|\,\bigr|\, d\bu^\ve(\vec{\theta}\,)=0\,\Bigr) |d\vec{\theta}|.
\end{split}
\label{eq: nve1}
\end{equation}
The Hessian  $\bsH(\bu^\ve,\vec{\theta}\,)$ is a  Gaussian random matrix with entries $(x_{ij})$ satisfying the correlation  equalities
\[
\Omega^\ve_{i,j|k,\ell}:=\bsE(x_{ij} x_{k\ell}) =\pa^4_{\theta_i\theta_j\vfi_k\vfi_\ell}\eE^\ve(\vec{\theta},\vec{\vfi})|_{\vec{\theta}=\vec{\vfi}}=(2\pi)^{-m} \ve^{-m-4} V^\ve_{i,j,k,\ell}(0),\;\; 1\leq i,j,k,\ell\leq m.
\]
The  random matrix  $\hat{\bsH}_\ve(\vec{\theta}) $   obtained from  $\Hess_{\vec{\theta}}(\bu^\ve)$  by conditioning  that $d\bu^\ve(\vec{\theta})=0$     is also Gaussian   and  its entries $\hat{x}_{ij}$ satisfy correlation equalities determined by the regression formula
\[
\Xi^\ve_{i,j|k,\ell}=\Omega^\ve_{i,j| k,\ell} -\sum_{a,b=1}^m\bsE\bigl(\, \bu^\ve_{i,j}(\vec{\theta}) \bu^\ve_a(\vec{\theta})\,\bigr)\check{s}^\ve_{ab}\bsE\bigl(\,\bu^\ve_b(\vec{\theta})\bu^\ve_{k,\ell}(\vec{\theta})\,\bigr)
\]
\[
=(2\pi)^{-m}\ve^{-m-4}\left(V^\ve_{i,j,k,\ell}(0)- \sum_{a,b}^m\; \underbrace{V^\ve_{i,j,a}(0)V^\ve_{k,\ell, b}(0) }_{=0}\;\si^\ve_{ab}\right)
\]
\[
=(2\pi)^{-m}\ve^{-m-4} V^\ve_{i,j,k,\ell}(0) =(2\pi)^{-m}\ve^{-m-4}\Bigl(\; V_{i,j,k,\ell}(0) +O\bigl(\ve^N\bigr)\;\Bigr),\;\;\forall N>1.
\]
Denote by $\Gamma_{\Upsilon^\ve}$ the Gaussian  measure on $\Sym_m$   defined by the covariance equalities
\begin{equation}
\Upsilon^\ve_{i,j|k,\ell}:=\bsE(x_{i,j}x_{k,\ell})=(2\pi)^m\ve^{m+4}\Xi^\ve_{i,j| k,\ell}=V^\ve_{i,j,k,\ell}(0).
\label{eq: upsivea}
\end{equation}
Then
\begin{equation}
\bsE\Bigl(\, |\det \bsH(\bu^\ve,\vec{\theta}\,)|\,\bigr|\, d\bu^\ve(\vec{\theta})=0\,\Bigr)  = (2\pi)^{-\frac{m^2}{2}}\ve^{-\frac{m(m+4)}{2}}\int_{\Sym_m}|\det A| \, \Gamma_{\Upsilon^\ve}(dA).
\label{eq: upsive}
\end{equation}
Using (\ref{eq: nve1}) we deduce
\begin{equation}
N_\ve=\frac{\ve^{-m}}{(2\pi)^{\frac{m}{2}}}\int_{\bT^m} \bigl(\det\bsH(-V^\ve,0)\,\bigr)^{-\frac{1}{2}}\left(\, \int_{\Sym_m}|\det A|\, \Gamma_{\Upsilon^\ve}(dA)\,\right)\; |d\vec{\theta}|.
\label{eq: nve2}
\end{equation}
This shows that $\rho_1^\ve(\vec{\theta})$ is independent of  $\vec{\theta}$ and
\[
\rho_1^\ve(\vec{\theta})=\frac{\ve^{-m}}{(2\pi)^{\frac{m}{2}}}  \bigl(\det\bsH(-V^\ve,0)\,\bigr)^{-\frac{1}{2}}\left(\, \int_{\Sym_m}|\det A|\, \Gamma_{\Upsilon^\ve}(dA)\,\right).
\]
We  denote by $\Sym_m^{\times 2}$ the space  of symmetric $2m\times 2m$ matrices $B$ that have a diagonal block  decomposition
\[
B=\left[
\begin{array}{cc}  
B_-  & 0\\
0 & B_+
\end{array}
\right],\;\;B_\pm\in\Sym_m.
\]
The probability measure $\Gamma_{\Upsilon^\ve}$    on $\Sym_m$ induces a probability measure 
\[
\Gamma_{\Upsilon^\ve\times\Upsilon^\ve} :=\Gamma_{\Upsilon^\ve}\otimes \Gamma_{\Upsilon^\ve}
\]
 on $\Sym_m^{\times 2}$. Using the notation in Section \ref{s: outline} we deduce
\begin{equation}
\tilde{\rho}_1^\ve(\vec{\theta},\vec{\vfi}) =\frac{\ve^{-2m}}{(2\pi)^m}  \bigl(\det\bsH(-V^\ve,0)\,\bigr)^{-1}\left(\, \int_{\Sym_m^{\times 2}}|\det B|\, \Gamma_{\Upsilon^\ve\times \Upsilon^\ve}(|dB|)\,\right).
\label{eq: tildro}
\end{equation}
For any smooth function $F:\bR^m\to\bR$ we introduce the symmetric $2m\times 2m$ matrix
\begin{equation}
\eH_\infty(F):=\left[
\begin{array}{cc}
\bsH(-F,0) & 0\\
0 & \bsH(-F,0).
\end{array}
\right].
\label{hinfty}
\end{equation}
We observe that $\det \eH_\infty(F)=\bigl(\, \det \bsH(-F,0)\,\bigr)^2$. In view of this we  deduce
\begin{equation}
\tilde{\rho}_1^\ve(\Theta)=\frac{\ve^{-2m}}{(2\pi)^m\sqrt{\det \eH_\infty(V^\ve)}} \int_{\Sym_m^{\times 2}} |\det B| \, \Gamma_{\Upsilon^\ve\times \Upsilon^\ve}(|dB|),\;\;\Theta=(\vec{\theta},\vec{\vfi}).
\label{eq: tildro1}
\end{equation}

\begin{remark} Observe that
\[
\det \bsH(-V^\ve,0)= \det \bsH(-V,0) +O(\ve^N),\;\;\forall N>0,
\]
and
\[
\det \bsH(-V,0) = d_m^m.
\]
We set 
\begin{equation}
\begin{split}
\Upsilon^0_{i,j|k,\ell}:= V_{i,j,k,\ell}(0)= f''(0)\bigl(\, \delta_{ij}\delta_{k\ell}+ \delta_{ik}\delta_{j\ell}+ \delta_{i\ell}\delta_{jk}\,\bigr)\\
= h_m\bigl(\, \delta_{ij}\delta_{k\ell}+ \delta_{ik}\delta_{j\ell}+ \delta_{i\ell}\delta_{jk}\,\bigr).
\end{split}
\label{eq: upsi00}
\end{equation}
The collection  $\Upsilon^0=\bigl(\Upsilon^0_{i,j|k,\ell}\,\bigr)_{1\leq i,j,k,\ell\leq m}$ describes the covariance form   of an $O(m)$-invariant measure $\Gamma_{\Upsilon^0}$ on $\Sym_m$. Using  the terminology in Appendix \ref{s: gmat} we have
\[
\Gamma_{\Upsilon^0}=\bGamma_{h_m,h_m}.
\]
Observe that
\[
\bigl|\,\Upsilon^\ve_{i,j|k,\ell}-\Upsilon^0_{i,j|k,\ell}\,\bigr|=O(\ve^N)\,\;\;\forall N>0,\;;\forall1\leq i,j,k,\ell\leq m.
\]
Proposition \ref{prop: diff-gauss} implies that
\[
\int_{\Sym_m} |\det A| \Gamma_{\Upsilon^0}(dA)= \int_{\Sym_m} |\det A|\, \bGamma_{h_m,h_m}(dA)+O(\ve^N),\;\;\forall N >0.
\]
Using this in (\ref{eq: nve2}) we deduce
\begin{equation}
N_\ve=\frac{\ve^{-m}}{(2\pi d_m)^{\frac{m}{2}}}\left( \int_{\Sym_m} |\det A| \, \bGamma_{h_m,h_m}(dA)\right)\cdot\bigl(\,1+O(\ve^N)\,\bigr)\;\;\forall N>0.
\label{eq: nve3}
\end{equation}\qed
\label{rem: upsi0}
The constant $C_m(w)$ in (\ref{nve}) is given by
\begin{equation}
\begin{split}
C_m(w)=\frac{1}{(2\pi d_m)^{\frac{m}{2}}}\left( \int_{\Sym_m} |\det A| \, \bGamma_{h_m,h_m}(dA)\right)\\
= \left(\frac{h_m}{2\pi d_m}\right)^{\frac{m}{2}}\left( \int_{\Sym_m} |\det B| \, \bGamma_{1,1}(dB)\right).
\end{split}
\label{nvec}
\end{equation}
\end{remark}

\section{The density $\rho_2^\ve$}
\label{s: rho2}
\setcounter{equation}{0}

\subsection{The covariance kernel of $\bsU_\ve$} The covariance kernel  of $\bsU_\ve$ is the function
\[
{\teE}^\ve(\vec{\theta}_1,\vec{\vfi}_1; \vec{\theta}_2,\vec{\vfi}_2) =\bsE\Bigl(\,\bsU^\ve(\vec{\theta}_1,\vec{\vfi}_1)\bsU^\ve(\vec{\theta}_2,\vec{\vfi}_2)\,\Bigr)
\]
\[
=\eE^\ve(\vec{\theta}_1,\vec{\theta}_2)+\eE^\ve(\vec{\theta}_1,\vec{\vfi}_2)+\eE^\ve(\vec{\vfi}_1,\vec{\theta}_2)+\eE^\ve(\vec{\vfi}_1,\vec{\vfi}_2)
\]
\[
= (2\pi\ve)^{-m}\Biggl( V^\ve\Bigl(\ve^{-1}(\vec{\theta}_2-\vec{\theta}_1)\Bigr)+ V^\ve\Bigl(\ve^{-1}(\vec{\vfi}_2-\vec{\theta}_1)\,\Bigr)
+V^\ve\Bigl(\ve^{-1}(\vec{\theta}_2-\vec{\vfi}_1)\,\Bigr) +V^\ve\Bigl(\ve^{-1}(\vec{\vfi}_2-\vec{\vfi}_1)\,\Bigr)\Biggr).
\]
Let us introduce the notation
\[
\Theta:=(\vec{\theta},\vec{\vfi})\in\bT^m\times\bT^m , \tau(\Theta):=\vec{\vfi}-\vec{\theta},\;\;\tau^\ve=\tau^\ve(\Theta):=\ve^{-1}\tau(\Theta).
\]
We  need to understand the quantities
\[
\pa^\alpha_{\Theta_1}\pa^\beta_{\Theta_2}\teE^\ve(\Theta_1,\Theta_2)_{\Theta_1=\Theta_2=\Theta}=\bsE\bigl(\;\pa^\alpha_\Theta\bsU_\ve(\Theta)\cdot\pa^\beta_\Theta\bsU_\ve(\Theta)\;\bigr). 
\]
Using the fact that $V^\ve$ is an even function we deduce  that for  any multi-indices $\alpha,\beta$ we have
\begin{subequations}
\begin{equation}
\pa^\alpha_{\vec{\theta}_1}\pa^\beta_{\vec{\theta}_2}\teE^\ve(\Theta,\Theta)= (2\pi\ve)^{-m} \ve^{-|\alpha|-|\beta|} (-1)^{|\alpha|} V^\ve_{\alpha+\beta}(0) , 
\label{eq: cova}
\end{equation}
\begin{equation} \pa^\alpha_{\vec{\vfi}_1}\pa^\beta_{\vec{\vfi}_2}\teE^\ve(\Theta,\Theta)=(2\pi\ve)^{-m} \ve^{-|\alpha|-|\beta|} (-1)^{|\alpha|}  V^\ve_{\alpha+\beta}(0) ,
\label{eq: covb}
\end{equation}
\begin{equation}
\pa^\alpha_{\vec{\theta}_1}\pa^\beta_{\vec{\vfi}_2} \teE^\ve(\Theta,\Theta)=(2\pi\ve)^{-m} \ve^{-|\alpha|-|\beta}(-1)^{|\alpha|} V^\ve_{\alpha+\beta}(\,\tau^\ve(\Theta) \,) , 
\label{eq: covc}
\end{equation}
\begin{equation}
\pa^\alpha_{\vec{\vfi}_1}\pa^\beta_{\vec{\theta}_2}\teE^\ve(\Theta,\Theta) =(2\pi\ve)^{-m}\ve^{-|\alpha|-|\beta|}(-1)^{|\beta|} V^\ve_{\alpha+\beta}(\;\tau^\ve(\Theta)\;).
\label{eq: covd}
\end{equation}
\end{subequations}

\subsection{The covariance form of $d\bsU_\ve(\Theta)$. }

Denote by  $\widetilde{\bsS}_\ve(\Theta)$ the covariance form of the Gaussian vector $d\bsU_\ve(\theta,\vfi)=d\bu^\ve(\theta)+d\bu^\ve(\vfi)$, 
\[
\widetilde{\bsS}_\ve(\Theta)=\left[
\begin{array}{cc}
\bsE\bigl(\,\pa_i\bu^\ve(\vec{\theta}) \pa_j\bu^\ve(\vec{\theta})\,\bigr)  &   \bsE(\pa_i\bu^\ve(\vec{\theta}) \pa_j\bu^\ve(\vec{\vfi})\,\bigr) \\
&\\
 \bsE\bigl(\,\pa_i\bu^\ve(\vec{\theta}) \pa_j\bu^\ve(\vec{\vfi}) \,\bigr)&  \bsE\bigl(\,\pa_i\bu^\ve(\vec{\vfi}) \pa_j\bu^\ve(\vec{\vfi})\,\bigr)
 \end{array}
 \right].
 \]
 Let us observe that this form is degenerate along the diagona $\bsD\subset \bT^m\times\bT^m$.   Denote by $N(\bsU^\ve)$ the number  of critical points of $\bsU^\ve$ situated outside the diagonal. Then
 \[
 N(\bsU^\ve)= N(\bu^\ve)^2-N(\bu^\ve)
 \]
 so that 
 \[
  \mu^\ve_{(2)}=\bsE\bigl(\, N(\bsU^\ve)\,\bigr)
  \]
  and the Kac-Rice formula implies that
 \begin{equation}
  \mu^\ve_{(2)}= \int_{\bT^m\times \bT^m\setminus \bsD} \frac {1}{\sqrt{ \det 2\pi \widetilde{\bsS}_\ve(\Theta)}} \bsE\Bigl(\, \bigr|\det \Hess \bsU^\ve(\Theta)\bigr|\,\Bigl|\, d\bsU^\ve(\Theta)=0\,\Bigr)|d\Theta|.
 \label{eq: KR3}
 \end{equation}
 Set
\[
r_\ve:=\frac{1}{2}|\tau^\ve|^2=\frac{1}{2\ve^2}|\tau|^2.
\] 
For any $\eta\in\bR^m$  and any smooth function $F:\bR^m\to \bR$  we  denote by $\eH(F,\eta)$ the quadratic form on $\bR^m\oplus \bR^m=T^*_{\vec{\theta}}\bT^m\oplus  T^*_{\vec{\vfi}}\bT^m$      whose value on  $X_-\oplus X_+\in\bR^m\oplus \bR^m$ is given by
\[
\eH(F,\eta)(X_-\oplus X_+)=-\Bigl(\,\pa^2_{X_-} F(0)+\pa^2_{X_+}F(0) +2\pa^2_{X_-X_+} F(\eta)\,\Bigr).
\]
If we fix an orthonormal basis $\be_1,\dotsc,\be_m$ of $\bR^m$ we obtain an orthonormal basis of $\bR^m\oplus \bR^m$
\[
\underline{\bsf}=\bigl\{\bsf_1=\be_1\oplus 0,\dotsc, \bsf_m=\be_m\oplus 0,\,\bsf_{m+1}=0\oplus \be_1,\dotsc,\bsf_{2m}= 0\oplus \be_m\,\bigr\}.
\]
In the basis $\underline{\bsf}$  the quadratic form $\eH(F,\eta)$ can be identified with the symmetric matrix $\eH(F,\eta)$ defined in (\ref{hfeta}),
\[
\eH(F,\eta)= \left[
\begin{array}{cc}
-\bsH(F, 0) & -\bsH(F,\eta)\\
-\bsH(F,\eta) & -\bsH(F,0)
\end{array}
\right].
\]
Using (\ref{eq: cova}-\ref{eq: covd}) we deduce that  
\begin{equation}
(2\pi)^m\ve^{m+2} \widetilde{\bsS}_\ve(\Theta)=\eH(V^\ve,\tau^\ve).
\label{eq: covdU}
\end{equation}
There is one first  issue  we need to address, namely the nondegeneracy  of $\eH(-V^\ve,\eta)$.

\subsection{Some quantitative nondegeneracy results}\label{ss: 53} We begin with  a technical result whose proof can be found in Appendix \ref{s: a}.
\begin{lemma}
Let 
\begin{equation}
\alpha(x):=\min\bigl(\,  \sin^2(x/2),  \cos^2(x/2)\,\bigr),\;\;\forall x\in\bR.
\label{eq: alpha}
\end{equation}
Then for any $t\geq 0$ we have
\begin{subequations}
\begin{equation}
|f'(0)|- \bigl\vert\, f'(t^2/2)+t^2f''(t^2/2)\,\bigr\vert \geq 2 \int_{\bR^m} \alpha(tx_1)x_1^2  w(x)|dx| ,
\label{eq: technical}
\end{equation}
\begin{equation}
|f'(0)|-|f'(t^2/2)| \geq 2 \int_{\bR^m} \alpha(tx_1)x_2^2  w(x)|dx|.
\label{eq: technicalb}
\end{equation}
\end{subequations}\qed
\label{lemma: tech}
\end{lemma}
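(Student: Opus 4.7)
The plan is to compute both sides of the two inequalities in terms of the Fourier integral defining $V$, and then reduce each inequality to an elementary trigonometric identity applied pointwise under the integral sign.

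First, since $w$ is even, $V$ is real-valued, and from (\ref{vxi}) we have
\[
V(\xi)=\int_{\bR^m}\cos\lan\xi,x\ran\, w(|x|)\,|dx|.
\]
Differentiating under the integral, for any unit vector $\be_1\in\bR^m$ and any $t\geq 0$, I compute
\[
V_{1,1}(t\be_1)=-\int_{\bR^m}x_1^2\cos(tx_1)\,w(|x|)\,|dx|,\quad V_{2,2}(t\be_1)=-\int_{\bR^m}x_2^2\cos(tx_1)\,w(|x|)\,|dx|.
\]
On the other hand, combining (\ref{eq: d2}) with $\eta=t\be_1$ yields
\[
V_{1,1}(t\be_1)=f'(t^2/2)+t^2 f''(t^2/2),\qquad V_{2,2}(t\be_1)=f'(t^2/2),
\]
and at $t=0$ the identity (\ref{eq: sdh1}) gives $f'(0)=-\int_{\bR^m}x_1^2 w(|x|)\,|dx|=-\int_{\bR^m}x_2^2 w(|x|)\,|dx|$, so in particular $f'(0)\leq 0$ and $|f'(0)|=\int x_i^2 w(|x|)|dx|$ for $i=1,2$.

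Next I would invoke the elementary pointwise identity
\[
1-|\cos y|=2\min\bigl(\sin^2(y/2),\cos^2(y/2)\bigr)=2\alpha(y),\qquad\forall y\in\bR,
\]
which follows immediately from $\cos y=\cos^2(y/2)-\sin^2(y/2)$ together with $\cos^2(y/2)+\sin^2(y/2)=1$ (the larger of the two squares equals $1-\alpha(y)$ and the smaller equals $\alpha(y)$, so $|\cos y|=1-2\alpha(y)$).

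With these ingredients, inequality (\ref{eq: technical}) reduces to
\[
\int_{\bR^m}x_1^2 w(|x|)|dx|-\Bigl|\int_{\bR^m}x_1^2\cos(tx_1)w(|x|)|dx|\Bigr|\;\geq\;\int_{\bR^m}x_1^2\bigl(1-|\cos(tx_1)|\bigr)w(|x|)|dx|,
\]
which is nothing but the triangle inequality applied inside the absolute value, combined with the nonnegativity of $x_1^2 w(|x|)$; the right-hand side then equals $2\int\alpha(tx_1)x_1^2 w(|x|)|dx|$ by the identity above. Inequality (\ref{eq: technicalb}) is proved in exactly the same way with $x_2^2$ replacing $x_1^2$ in the outer factor. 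There is no real obstacle here: the only thing to verify carefully is the trigonometric identity relating $1-|\cos y|$ to $\alpha(y)$, which is immediate from the half-angle formula.
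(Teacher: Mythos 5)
Your proof is correct and follows essentially the same route as the paper's: both express the derivatives of $V$ as Fourier integrals of $w$ against powers of $x_i$ and then apply a half-angle identity to $\cos(tx_1)$. The only cosmetic difference is that the paper computes $|f'(0)| \pm \bigl(f'(t^2/2)+t^2f''(t^2/2)\bigr)$ exactly as $2\int\sin^2(tx_1/2)x_1^2 w$ and $2\int\cos^2(tx_1/2)x_1^2 w$ and then takes the pointwise minimum inside the integral, whereas you package the two identities into $1-|\cos y|=2\alpha(y)$ and invoke the triangle inequality $\bigl|\int x_1^2\cos(tx_1)w\bigr|\le\int x_1^2|\cos(tx_1)|w$ — mathematically the same step.
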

\begin{lemma} The quadratic form  $\eH(V,\eta)$ is  nondegenerate for any $\eta\in\bR^m\setminus 0$. 
\end{lemma}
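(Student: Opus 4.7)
The plan is to reduce nondegeneracy of the $2m \times 2m$ block matrix $\eH(V,\eta)$ to a spectral statement about the $m \times m$ Hessian $\bsH(V,\eta)$, and then to extract that spectral information from the radial symmetry of $V$ together with Lemma \ref{lemma: tech}.

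First I would observe, using (\ref{eq: deriv1}), that $\bsH(V,0) = -d_m I_m$ is a \emph{scalar} matrix. Substituting this into the definition (\ref{hfeta}),
$$\eH(V,\eta) = \begin{pmatrix} d_m I_m & -\bsH(V,\eta) \\ -\bsH(V,\eta) & d_m I_m \end{pmatrix}.$$
Because $d_m I_m$ commutes with every $m\times m$ matrix, the orthogonal change of variables $(u,v)\mapsto \tfrac{1}{\sqrt 2}(u+v,\,u-v)$ conjugates $\eH(V,\eta)$ to the block-diagonal matrix $\bigl(d_m I_m - \bsH(V,\eta)\bigr)\oplus\bigl(d_m I_m + \bsH(V,\eta)\bigr)$. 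Hence nondegeneracy of $\eH(V,\eta)$ follows the moment I can show that neither $+d_m$ nor $-d_m$ is an eigenvalue of $\bsH(V,\eta)$; in fact showing that every eigenvalue lies in the open interval $(-d_m, d_m)$ would give positive definiteness as a bonus.

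Next I would exploit the radial structure of $V$: by (\ref{eq: d2}) the vector $\eta$ is an eigenvector of $\bsH(V,\eta)$ with eigenvalue $\lambda_\parallel := f'(|\eta|^2/2) + |\eta|^2 f''(|\eta|^2/2)$, and the orthogonal complement $\eta^\perp$ is an $(m{-}1)$-dimensional eigenspace with eigenvalue $\lambda_\perp := f'(|\eta|^2/2)$. Since $d_m = -f'(0) = |f'(0)|$ by (\ref{eq: sdh1}), setting $t = |\eta|$, Lemma \ref{lemma: tech} delivers exactly
$$d_m - |\lambda_\parallel| \;\geq\; 2\!\int_{\bR^m}\!\! \alpha(tx_1)\, x_1^2\, w(|x|)\,|dx|, \qquad d_m - |\lambda_\perp| \;\geq\; 2\!\int_{\bR^m}\!\! \alpha(tx_1)\, x_2^2\, w(|x|)\,|dx|.$$

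The only remaining point, and the one that requires a little care, is to check that both integrals on the right are \emph{strictly} positive when $t > 0$. For $t > 0$ the function $\alpha(tx_1)$ vanishes only on the measure-zero set $\{x : tx_1 \in \pi\bZ\}$, while $w$ is a nonnegative, continuous, nontrivial Schwartz function, so $w(|x|)$ is strictly positive on some open annulus in $\bR^m$. On the intersection of that annulus with $\{x_j\neq 0,\; tx_1\notin\pi\bZ\}$ the integrand is strictly positive, so the integrals themselves are positive. This upgrades the two inequalities to strict ones, forces $|\lambda_\parallel|, |\lambda_\perp| < d_m$, and completes the proof. The real substance is Lemma \ref{lemma: tech}; this nondegeneracy statement is its clean spectral corollary, with the only mild obstacle being the strict-positivity check just described.
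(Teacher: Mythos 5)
Your argument is correct, and in substance it is the same as the paper's: both proofs use (\ref{eq: d2}) to read off the eigenvalues $\lambda_\parallel = f'(r)+|\eta|^2 f''(r)$ and $\lambda_\perp=f'(r)$ of $\bsH(V,\eta)$, and both derive the needed strict eigenvalue bound from Lemma \ref{lemma: tech}. The only real divergence is the algebra used to block-diagonalize the $2m\times 2m$ matrix: you conjugate by the fixed orthogonal matrix $\tfrac{1}{\sqrt2}\bigl(\begin{smallmatrix}I&I\\ I&-I\end{smallmatrix}\bigr)$, reducing things to nondegeneracy of $d_m\one_m\pm\bsH(V,\eta)$, whereas the paper left-multiplies by the auxiliary matrix $\widetilde{\bsT}(\eta)$ to arrive at $\bsH(V,0)^2-\bsH(V,\eta)^2$. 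Since $\bsH(V,0)=-d_m\one_m$ commutes with $\bsH(V,\eta)$, these factor as $\bigl(d_m\one_m-\bsH(V,\eta)\bigr)\bigl(d_m\one_m+\bsH(V,\eta)\bigr)$, so the two routes impose literally the same spectral condition; your version is slightly more transparent (a genuine orthogonal conjugation) but does not produce the explicit inverse formula (\ref{eq: sinv}) that the paper reuses in its later estimates, so within the paper's logic the $\widetilde{\bsT}(\eta)$ computation earns its keep. You are also more careful than the paper in one small respect: you explicitly verify that the integrals appearing on the right of (\ref{eq: technical})--(\ref{eq: technicalb}) are strictly positive for $\eta\neq 0$ (measure-zero zero set of $\alpha(tx_1)$, $w$ nontrivial), which the paper takes as read when it promotes $\geq$ to $<$. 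That check does require the standing assumption $w\not\equiv 0$, which the paper uses implicitly throughout; worth stating, but not a gap in your argument.
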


\begin{proof} Choose an orthonormal frame $(\be_1,\dotsc,\be_m)$ of $\bR^m$ such that $\eta=|\eta|\be_1$.  Using (\ref{eq: d2}) we deduce that
\begin{equation}
\begin{split}
\bsH(V, \eta)=f'(|\eta|^2/2)\one_m+ \diag\Bigl(\,|\eta|^2 f''(|\eta|^2/2),0,\dotsc, 0\,\Bigr)\\
=\diag\Bigl( \, f'\bigl(\, |\eta|^2/2\,\bigr)+|\eta|^2 f''\bigl(\, |\eta|^2/2\,\bigr),\, f'\bigl(\, |\eta|^2/2\,\bigr),\dotsc, f'\bigl(\,|\eta|^2/2\,\bigr)\,\Bigr).
\end{split}
\label{eq: hesv}
\end{equation}
Moreover, according to  (\ref{eq: technical}, \ref{eq: technicalb}) we have
\[
\vert\, f'(|\eta|^2/2)\,\vert,\;\;\bigl\vert\, f '(|\eta|^2/2)+|\eta|^2f''(|\eta|^2/2)\,\bigr\vert <\vert\,f'(0)\vert,\;\; \forall \eta\neq 0.
\]
We deduce
\[
\bsH(V, \eta)^2 < \bsH(V,0)^2, \;\;\forall \eta\neq 0.
\]
In particular $\bsH(V, \eta)^2 - \bsH(V,0)^2$ is invertible for any $\eta\neq 0$. We  set
\[
r=r(\eta):=|\eta|^2/2.
\]
Observe that if we let $\eta$ go to zero  along the line spanned by $\be_1$, then
\begin{equation}
\lim_{r\to 0} \frac{1}{r}\Bigl(\, \bsH(V,0)-\bsH(V,\eta)\,\Bigr)=-f''(0)\underbrace{\diag\bigl(\, 3, 1,\dotsc, 1,\bigr)}_{=:\dot{H}},
\label{eq: sing-lim}
\end{equation}
whereas
\[
\lim_{r\to 0} \Bigl(\, \bsH(V,0)+\bsH(V,\eta)\,\Bigr)=2 f'(0)\one_m.
\]
Hence
\[
\lim_{r\to 0} \frac{1}{r}\Bigl(\, \bsH(V,0)^2-\bsH(V,\eta)^2\,\Bigr)=-2f'(0)f''(0)\dot{H}.
\]
Let us point out that the notation $\dot{H}$ is a bit misleading. The symmetric  operator $\dot{H}$ depends on the unit vector $\frac{1}{|\eta|} \eta$, so it is really  a degree zero homogeneous map
\[
\dot{H}: \bR^m\setminus 0 \to \Sym_m,\;\; \eta \mapsto  \dot{H}(\eta)
\]
described explicitly by the equality
\[
\dot{H}(\eta)=-\Bigl(\, \one_m+2 P_{\eta}\,\bigr),
\]
where $P_{\eta}$ denote the orthogonal projection onto the line spanned by the  vector $\eta$.

Set
\[
\widetilde{\bsT}(\eta) = \left[
\begin{array}{cc}
-\bsH(V,0)  & \bsH(V,\eta)\\
 &\\
\bsH(V,\eta) & -\bsH(V, 0)
\end{array}
\right].
\]
Observe that
\[
\widetilde{\bsT}(\eta)\eH(V,\eta)= \left[
\begin{array}{cc} 
\bsH(V,0)^2-\bsH(V, \eta)^2 & 0\\
& \\
0 &  \bsH(V,0)^2-\bsH(V, \eta)^2
\end{array}
\right].
\]
The inverse of  $ \bsH(V,0)^2-\bsH(V,\eta)^2$, denoted by $R(\eta)$, is
\begin{equation} 
R(\eta)=\diag\left(\frac{1}{f'(0)^2-\bigl(f'(r)+2 r f''(r)\bigr)^2},\frac{1}{ f'(0)^2-f'(r)^2},\dotsc, \frac{1}{f'(0)^2-f'(r)^2}\,\right),
\label{eq: rinv}
\end{equation}
then  we deduce
\begin{equation}
\eH(V,\eta)^{-1}=R(\eta)T(\eta) =\left[
\begin{array}{cc}
-R(\eta)\bsH(V, 0)  & R(\eta)\bsH(V,\eta)\\
 &\\
R(\eta)\bsH(V,\eta) & -R(\eta)\bsH(V,0)
\end{array}
\right].
\label{eq: sinv}
\end{equation}
\end{proof} 

\begin{remark}  The above proof shows that there exists a constant $C>0$ such that
\begin{equation}
\left\| \eH(V,\eta)^{-1}\right\| \leq \underbrace{C\left( \frac{1}{ f'(0)^2- (f'(r)+|\eta|^2f''(r) )^2} +\frac{1}{f'(0)^2-f'(r)^2}\right)}_{=:\mu(\eta)},\;\;\forall \eta\in\bR^m\setminus 0.
\label{eq: lower}
\end{equation}
Observe that Using formula (\ref{eq: hesv})  where $r=\frac{1}{2}|\eta|^2$ we deduce after an elementary computation
\begin{equation}
\begin{split}
\det \eH(V, \eta)= & \det \left[\begin{array}{cc}
f'(0) &  f'(r)+|\eta|^2 f''(r)\\
f'(r)+|\eta|^2f''(r) &f'(0)
\end{array}
\right] \cdot\left( \det \left[\begin{array}{cc}
f'(0) &  f'(r)\\
f'(r) &f'(0)
\end{array}
\right]\right)^{m-1}\\
&=\left(\;f'(0)^2- \Bigr(|\eta|^2f''(r)-f'(r)\Bigr)^2 \right)\left(\; f'(0)^{2}-f'(r)^2\;\right)^{m-1}\\
&\sim 3\bigl(\;-f'(0)f''(0)\;\bigr)^m|\eta|^{2m}\;\;\mbox{as $\eta\to 0$}.
\end{split}
\label{det_explode}
\end{equation}
\qed
\end{remark}
We set
\[
G^\ve(\vec{\theta}):=V^\ve(\vec{\theta})-V^\ve(0)=\sum_{\vec{\nu}\in\bZ^m\setminus 0}V\left(\vec{\theta}+\frac{1}{\ve}\vec{\nu}\right).
\]
Observe that $G^\ve(\vec{\theta})$ is an even function so that, for any multi-index $\alpha$ such that $|\alpha|$ is even, the function $\pa^\alpha_\xi G^\ve$ is also even. Hence, under these circumstances, 
\[
\pa^\alpha_\xi G^\ve(\theta)-\pa^\alpha_\xi G^\ve(0)=O(|\vec{\theta}|^2)\;\;\mbox{as $\theta\to 0$}.
\]
We can  say a bit more. 

\begin{lemma}   Let $k,N\in\bZ_{>0}$. Then there exists a constant  $C=C_{k,N}>0$  and $\ve_0=\ve_0(k,N)>0$ such that for any multi-index $\alpha$, $i=0,1$,$|\alpha|=2k+i$, any $|\vec{\theta}|\leq 1$, and any $\ve<\ve_0$  we have
\[
\Bigl|\, \pa^\alpha_\xi G^\ve(\theta)-\pa^\alpha_\xi G^\ve(0)\,\Bigr|\leq C\ve^N |\vec{\theta}|^{2-i}.
\]
\label{lemma: dif-est}
\end{lemma}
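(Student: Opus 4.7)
The plan rests on two straightforward ingredients: rapid decay of $V$ and a parity (evenness) argument exploited through Taylor expansion.

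\textbf{Step 1: A uniform Schwartz-type estimate for derivatives of $G^\ve$.} Since $w$ is Schwartz, so is its Fourier transform $V$ defined in \eqref{vxi}, and hence every derivative $\pa^\alpha V$. Fix any $M>m$. For $|\vec{\theta}|\le 1$ and $\ve<\tfrac12$, any $\vec\nu\in\bZ^m\setminus 0$ satisfies $|\vec{\theta}+\vec{\nu}/\ve|\ge |\vec{\nu}|/\ve -1\ge \tfrac{1}{2\ve}|\vec{\nu}|$, so
\[
\bigl|\pa^\alpha V(\vec{\theta}+\vec{\nu}/\ve)\bigr|\le C_{\alpha,M}\bigl(1+|\vec{\theta}+\vec{\nu}/\ve|\bigr)^{-M}\le C'_{\alpha,M}\, \ve^{M}|\vec{\nu}|^{-M}.
\]
Summing over $\vec\nu\ne0$ (the sum $\sum|\vec\nu|^{-M}$ converges) I obtain, for every multi-index $\alpha$ and every $N>0$, a constant $C_{\alpha,N}$ and an $\ve_0$ such that
\[
\sup_{|\vec{\theta}|\le 1}\bigl|\pa^\alpha G^\ve(\vec{\theta})\bigr|\le C_{\alpha,N}\,\ve^{N},\qquad \forall \ve<\ve_0.
\]

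\textbf{Step 2: Parity.} Because $V$ is radial (hence even), a change of summation index $\vec\nu\mapsto-\vec\nu$ shows $G^\ve(-\vec{\theta})=G^\ve(\vec{\theta})$. Consequently $\pa^\alpha G^\ve$ is even when $|\alpha|$ is even and odd when $|\alpha|$ is odd. In the odd case ($i=1$, $|\alpha|=2k+1$), this gives the automatic vanishing $\pa^\alpha G^\ve(0)=0$.

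\textbf{Step 3: Taylor with remainder.} For $|\alpha|=2k+i$ and $|\vec{\theta}|\le 1$ write
\[
\pa^\alpha G^\ve(\vec{\theta})-\pa^\alpha G^\ve(0)=
\begin{cases}
\displaystyle\sum_{j}\theta_j\int_0^1 \pa_j\pa^\alpha G^\ve(t\vec{\theta})\,dt, & i=1,\\[2mm]
\displaystyle\sum_{j,\ell}\theta_j\theta_\ell\int_0^1(1-t)\,\pa_j\pa_\ell\pa^\alpha G^\ve(t\vec{\theta})\,dt, & i=0,
\end{cases}
\]
where for the case $i=0$ I have used that the linear term in the Taylor expansion at $0$ of the even function $\pa^\alpha G^\ve$ vanishes identically. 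Applying Step 1 to the multi-index $\alpha+e_j$ (respectively $\alpha+e_j+e_\ell$), which is still of the form ``derivative of order $\le 2k+2$'' and thus $O(\ve^N)$ uniformly on $|\vec{\theta}|\le 1$, one obtains
\[
\bigl|\pa^\alpha G^\ve(\vec{\theta})-\pa^\alpha G^\ve(0)\bigr|\le C_{k,N}\,\ve^{N}\,|\vec{\theta}|^{2-i},
\]
after absorbing the finite combinatorial sums over $j,\ell$ into the constant.

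\textbf{Anticipated difficulty.} There is no real difficulty; both ingredients are elementary. The only point that requires a moment's care is ensuring that, in applying Step 1 to $\pa_j\pa^\alpha G^\ve$ or $\pa_j\pa_\ell\pa^\alpha G^\ve$, the constants $C_{\alpha,N}$ remain controlled uniformly for $|\alpha|\le 2k+2$, which is immediate since only finitely many multi-indices are involved for each fixed $k$. The constant $\ve_0$ can be taken to be $\min(\tfrac12,\ve_0(k,N))$ so that the separation $|\vec{\theta}+\vec{\nu}/\ve|\ge \tfrac{1}{2\ve}|\vec{\nu}|$ used in Step 1 holds.
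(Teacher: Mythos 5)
Your proof is correct and rests on the same two ingredients as the paper's: Schwartz decay of $V$ to get $O(\ve^N)$ bounds, and the evenness of $V$ (hence of $G^\ve$) to kill the linear Taylor term and win the extra factor of $|\vec\theta|$. The only difference is organizational — the paper symmetrizes the series term-by-term as $\frac12\sum_{\vec\nu}[V_\alpha(\vec\theta+\vec\nu/\ve)+V_\alpha(-\vec\theta+\vec\nu/\ve)-2V_\alpha(\vec\nu/\ve)]$ and applies the mean value theorem to each summand, whereas you first establish the uniform derivative bound $\sup_{|\vec\theta|\le 1}|\pa^\beta G^\ve|=O(\ve^N)$ and then apply Taylor with integral remainder directly to $G^\ve$; these are the same estimate in two outfits.
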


\begin{proof} We consider only the case $i=0$. The case $i=1$ is dealt with in an analogous fashion. Set $V_\alpha:=\pa^\alpha_\xi V$. Note that  $V_\alpha$ is an even function and
\[
\pa^\alpha_\xi G^\ve(\theta)-\pa^\alpha_\xi G^\ve(0)=\sum_{\vec{\nu}\in\bZ^m\setminus 0} \left(\,V_\alpha\Bigr(\vec{\theta}+\frac{1}{\ve}\vec{\nu}\,\Bigr)-V_\alpha\Bigl( \frac{1}{\ve}\vec{\nu}\Bigr)\,\right)
\]
\[
=\frac{1}{2}\sum_{\vec{\nu}\in\bZ^m\setminus 0} \left(\,V_\alpha\Bigr(\vec{\theta}+\frac{1}{\ve}\vec{\nu}\,\Bigr)+V_\alpha\Bigr(-\vec{\theta}+\frac{1}{\ve}\vec{\nu}\,\Bigr)-2V_\alpha\Bigl( \frac{1}{\ve}\vec{\nu}\Bigr)\,\right)
\]
Now observe that the mean value theorem  implies that for $\vec{\nu}\in\bZ^m\setminus 0$ we have
\[
\left|\, V_\alpha\Bigr(\vec{\theta}+\frac{1}{\ve}\vec{\nu}\,\Bigr)+V_\alpha\Bigr(-\vec{\theta}+\frac{1}{\ve}\vec{\nu}\,\Bigr)-2V_\alpha\Bigl( \frac{1}{\ve}\vec{\nu}\Bigr)\right| \leq  \left(\sup_{|\eta-\ve^{-1}\vec{\nu}|\leq 2} | D^2 V_\alpha (\eta)|\right)|\vec{\theta}|^2
\]
(use the fact that $V\in\eS(\bR^m)$)
\[
\leq A_{N,k}\frac{\ve^N}{|\vec{\nu}|^N}
\]
for  $\ve>0$ sufficiently small. 
Hence
\[
\left|\pa^\alpha_\xi G^\ve(\theta)-\pa^\alpha_\xi G^\ve(0)\right|\leq\frac{A_{N,k}\ve^N}{2}\sum_{\vec{\nu}\in\bZ^m\setminus 0} \frac{1}{|\vec{\nu}|^N}.
\]
The above series is convergent as soon as $N>m$.
\end{proof}

\begin{lemma}  There exists $\ve_0>0$ such that  the following hold.

\smallskip

\noindent (a) For any $\ve<\ve_0$ and any $\eta\in\bR^m\setminus 0$ the operator $\eH(V^\ve,\eta)$ is invertible. 

\smallskip

\noindent (b) There exists   a constant $C>1$ such that for $\ve <\ve_0$ and $|\eta|<1$ we have 
\begin{equation}
\frac{1}{C}|\eta|^{2m} \leq \det \eH(V^\ve, \eta)\leq C |\eta|^{2m}.
\label{eq: det}
\end{equation}

\end{lemma}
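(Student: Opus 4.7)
The plan is to reduce the $2m\times 2m$ determinant to a product of two $m\times m$ determinants via the block-matrix identity
\[
\det\begin{bmatrix} A & B\\ B & A\end{bmatrix}=\det(A-B)\det(A+B),
\]
which is valid for any square $m\times m$ matrices $A,B$ (it follows from conjugation by the orthogonal matrix $\frac{1}{\sqrt 2}\begin{bmatrix}\one&\one\\\one&-\one\end{bmatrix}$ and does \emph{not} require $A$ and $B$ to commute). Applied to $\eH(V^\ve,\eta)$ this yields
\[
\det\eH(V^\ve,\eta)=\det\bigl(\bsH(V^\ve,0)+\bsH(V^\ve,\eta)\bigr)\cdot\det\bigl(\bsH(V^\ve,0)-\bsH(V^\ve,\eta)\bigr),
\]
and I would bound each factor separately, first for $V$ and then transfer the estimates to $V^\ve$.

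For $F=V$ the diagonal form (\ref{eq: hesv}) together with Lemma \ref{lemma: tech} (supplying the strict inequalities $|f'(r)|,|f'(r)+2rf''(r)|<|f'(0)|$ for all $\eta\neq 0$) shows that the eigenvalues of $\bsH(V,0)+\bsH(V,\eta)$ all share the sign of $f'(0)$, so the sum factor stays bounded away from $0$ uniformly for $|\eta|\leq 1$. For the difference factor, the same diagonal form together with the Taylor expansion of $f'$ at $0$ makes every eigenvalue comparable to $|\eta|^2$ for small $\eta$, while on any compact annulus $\{\delta\leq|\eta|\leq 1\}$ each eigenvalue is bounded below in absolute value by a positive constant; combining the two regimes,
\[
c_0^{-1}|\eta|^{2m}\leq\det\bigl(\bsH(V,0)-\bsH(V,\eta)\bigr)\leq c_0|\eta|^{2m},\qquad 0<|\eta|\leq 1.
\]

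To transfer these bounds to $V^\ve$, I would write $V^\ve=V+G^\ve$ and then use Lemma \ref{lemma: dif-est} (with $|\alpha|=2,\,i=0$) together with (\ref{eq: vve_appr}) evaluated at the origin to obtain, uniformly for $|\eta|\leq 1$ and in operator norm,
\[
\bsH(V^\ve,0)+\bsH(V^\ve,\eta)=\bsH(V,0)+\bsH(V,\eta)+O(\ve^N),
\]
\[
\bsH(V^\ve,0)-\bsH(V^\ve,\eta)=\bsH(V,0)-\bsH(V,\eta)+O(\ve^N|\eta|^2).
\]
The first perturbation is harmless for the sum factor. The second is precisely matched to the leading behavior: every eigenvalue of the unperturbed difference is of size $|\eta|^2$ and the perturbation is also of size $\ve^N|\eta|^2$, so Weyl's inequality keeps every perturbed eigenvalue $\sim|\eta|^2$ for $\ve<\ve_0$. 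Multiplying the bounds for the two factors yields (\ref{eq: det}) and the invertibility assertion (a) on $0<|\eta|\leq 1$; for $|\eta|$ in the remaining bounded range relevant to (a) (kept strictly inside one fundamental domain of the $\ve^{-1}\bZ^m$-periodic function $V^\ve$), invertibility follows from continuity of $\det$ and (\ref{eq: vve_appr}), since $\det\eH(V,\eta)$ is strictly positive on every compact subset of $\bR^m\setminus 0$ by (\ref{det_explode}) and Lemma \ref{lemma: tech}.

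The hard part will be controlling the difference factor near $\eta=0$, where it degenerates to order $|\eta|^{2m}$: a naive use of (\ref{eq: vve_appr}) alone would give only an $O(\ve^N)$ operator-norm perturbation, which would overwhelm the $O(|\eta|^2)$ leading eigenvalues as soon as $|\eta|^2\lesssim\ve^N$. The quadratic-in-$|\eta|$ refinement of Lemma \ref{lemma: dif-est} is exactly what is needed to make the estimate uniform all the way down to $\eta=0$.
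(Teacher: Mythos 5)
Your proof is correct, and it takes a genuinely different (and arguably cleaner) route for part (b) than the paper. The paper first establishes invertibility by inverting $\eH(V^\ve,\eta)$ explicitly via the Schur complement $A_\ve-B_\ve A_\ve^{-1}B_\ve$, then proves the two-sided determinant bound by the factorization
\[
\eH(V^\ve,\eta)=\left[\begin{array}{cc}\one & B_\ve A_\ve^{-1}\\ B_\ve A_\ve^{-1} & \one\end{array}\right]\left[\begin{array}{cc}A_\ve & 0\\ 0 & A_\ve\end{array}\right],
\]
reducing to $\det C_\ve(\eta)$ with $C_\ve=B_\ve A_\ve^{-1}-\one=O(|\eta|^2)$. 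This gives the \emph{upper} bound $\det\eH(V^\ve,\eta)=O(|\eta|^{2m})$ directly; to obtain the matching \emph{lower} bound the paper then observes that all partials of order $<2m$ of $\eta\mapsto\det\eH(V^\ve,\eta)$ vanish at the origin, invokes $C^\infty$-convergence $\det\eH(V^\ve,\cdot)\to\det\eH(V,\cdot)$, and reads off the leading coefficient from the explicit computation (\ref{det_explode}). Your orthogonal block diagonalization $\det\eH(V^\ve,\eta)=\det\bigl(\bsH(V^\ve,0)+\bsH(V^\ve,\eta)\bigr)\det\bigl(\bsH(V^\ve,0)-\bsH(V^\ve,\eta)\bigr)$ reaches the same conclusion more directly: the sum factor is uniformly nondegenerate via Lemma \ref{lemma: tech} (all eigenvalues strictly negative on the compact set $|\eta|\leq 1$, hence bounded away from zero), while every eigenvalue of the difference factor is pinned to $\sim|\eta|^2$ by the Taylor expansion plus compactness, and both upper and lower bounds on the $2m$-determinant drop out at once. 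You also correctly identify that the quadratic refinement in Lemma \ref{lemma: dif-est} (giving an $O(\ve^N|\eta|^2)$ perturbation for the difference factor, matched to the $|\eta|^2$ size of the unperturbed eigenvalues) is the key uniform-in-$\ve$ ingredient — the paper uses the same lemma at the analogous spot. One minor point: your dispatch of part (a) for $1\leq|\eta|$ up to the boundary of the fundamental domain cites (\ref{eq: vve_appr}), which is stated only on balls of fixed radius $R$; what is really needed there is the uniform-on-$|\eta|\lesssim\ve^{-1}$ version (\ref{eq: vve0small}), which does hold by Schwartz decay. The paper's own treatment of this range (the claim that $\eH(V^\ve,\cdot)\to\eH(V,\cdot)$ uniformly on all of $|\eta|\geq\rho_0$) has the same issue, so this is a shared imprecision rather than a flaw in your argument.
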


\begin{proof} Let us observe that if $X=X(A,B)$ is a symmetric $2m\times 2m$ symmetric matrix  
\[
X=\left[
\begin{array}{cc}
A & B\\
B & A
\end{array}
\right]
\]
where $A$, $B$ are  symmetric $m\times m$ matrices,  and if the matrices $A$  and $A-BA^{-1}B$ are invertible, then $X$ is invertible and 
\begin{equation}
X^{-1}=\left[
\begin{array}{cc}
\bigl(\, A- BA^{-1}B\,\bigr)^{-1} & -A^{-1}B\bigl(\, A- BA^{-1}B\,\bigr)^{-1}\\
&
\\
-A^{-1}B\bigl(\, A- BA^{-1}B\,\bigr)^{-1} &  \bigl(\, A- BA^{-1}B\,\bigr)^{-1}
\end{array}
\right].
\label{eq: inv_block}
\end{equation}
The matrix $\eH(V^\ve,\eta)$ has this form $X(A,B)$ where
\[
A=A_\ve= \bsH(-V^\ve,0),\;\;B=B_\ve=B_\ve(\eta)=\bsH(-V^\ve,\eta).
\]
Again, we assume  that we have chosen an orthonormal  basis $\be_1,\dotsc,\be_m$ such that
\[
\eta=|\eta|\be_1.
\]
Since $V\in\eS(\bR^m)$ we deduce that
\[
A_\ve= -\bsH(V,0)+ O(\ve^N)
\]
so that there exists $\ve_1>0$ such that  $A_\ve$ is invertible for $\ve<\ve_1$.  Moreover
\[
A_\ve^{-1}=-\bsH(V,0)^{-1}\left(\,\one_m+O\bigl(\,\ve^N\,\bigr)\,\right)\stackrel{(\ref{eq: hesv})}{=}-\frac{1}{f'(0)}\left(\,\one_m+O\bigl(\,\ve^N\,\bigr)\,\right).
\]
Note that
\begin{equation}
B_\ve-A_\ve =\bsH(V^\ve,0)-\bsH(V^\ve,\eta)= \bsH(V,0)-\bsH(V,\eta) + \bsH(G_\ve,0)-\bsH(G_\ve,\eta) .
\label{eq: diff0}
\end{equation}
Using  (\ref{eq: sing-lim}) we deduce
\[
\bsH(V,0)-\bsH(V,\eta)=-rf''(0)\dot{H} + O(r^2),\;\;r=\frac{1}{2}|\eta|^2,
\]
while Lemma  \ref{lemma: dif-est} implies  that
\[
\bsH(G_\ve,0)-\bsH(G_\ve,\eta) = O(\ve^N r),
\]
where above, and in what follows,  the constants implied by the above $O$-symbols are independent of $\eta$ and $\ve$. Hence
\[
B_\ve-A_\ve=\bsH(V^\ve,0)-\bsH(V^\ve,\eta)= -f''(0)r\dot{H} + O(r\ve^N+r^2).
\]
Thus
\[
B_\ve A_\ve^{-1} =\left(\, A_\ve -rf''(0) \dot{H} + O(r\ve^N+r^2)\,\right) A_\ve^{-1}
\]
\[
= \one_m -  rf''(0)\dot{H} A_\ve^{-1}+O(r\ve^N+r^2)
\]
\[
=\one_m -  r\frac{f''(0)}{f'(0)}\dot{H} A_\ve^{-1}+O(r\ve^N+r^2),
\]
\[
B_\ve A_\ve^{-1}B_\ve = B_\ve A_\ve^{-1} \left(\, A_\ve -f''(0) r\dot{H} + O(r\ve^N+r^2)\,\right)
\]
\[
= A_\ve- 2f''(0)r\dot{H} + O(\ve^Nr+ r^2).
\]
Hence
\[
A_\ve -A_\ve B_\ve^{-1}A_\ve= 2f''(0) r\dot{H} + O(r\ve^N+r^2),
\]
We deduce that  there exists $\rho_0,\ve_2>0$ such that if $|\eta|<\rho_0$ and $\ve<\ve_2$,  then  $A_\ve-B_\ve A_\ve^{-1}$ is invertible and 
\[
\bigl(\,A_\ve-B_\ve A_\ve^{-1}\,\bigr)^{-1}=\frac{1}{2f''(0)r}\dot{H}^{-1}\Bigl(\,\one_m +O\bigl(\ve^N+r\,\bigr)\,\Bigr).
\]
We deduce that if $0<|\eta|<\rho_0$,  and $\ve <\min(\ve_1,\ve_2)$,  then the matrix $\eH(V^\ve,\eta)$ is invertible.  

Set
\[
\mu^*(\rho_0)=\sup_{|\eta|\geq \rho_0}\mu(\eta)
\]
where $\mu$ is defined in (\ref{eq: lower}). Since $\mu^*(\rho_0) <\infty$ and $\eH(V^\ve,\eta)$ converges uniformly to $\eH(V,\eta)$ on $|\eta|\geq \rho_0$ as $\ve\to 0$ we deduce from (\ref{eq: lower}) that  there exists $\ve_0<\min(\ve_1,\ve_2)$ such that $\eH(V^\ve,\eta)$ is invertible for $|\eta|\geq \rho_0$.

To prove (\ref{eq: det}) observe that 
\[
\eH(V^\ve,\eta)= \left[
\begin{array}{cc}
\one & B_\ve(\eta)A_\ve^{-1}\\
 B_\ve(\eta) A_\ve^{-1}& \one
\end{array}
\right] \cdot \left[
\begin{array}{cc}
A_\ve & 0\\
0 & A_\ve
\end{array}
\right].
\]
Set 
\[
C_\ve(\eta):=B_\ve(\eta)A_\ve^{-1}-\one.
\]
Then
\[
\det \left[
\begin{array}{cc}
\one & \one+ C_\ve(\eta)\\
\one + C_\ve(\eta) & \one
\end{array}
\right] 
= \det \left[
\begin{array}{cc}
\one &  C_\ve(\eta)\\
\one + C_\ve(\eta) & -C_\ve(\eta)
\end{array}
\right]= \det \left[
\begin{array}{cc}
2\one +C_\ve\eta &  0\\
\one + C_\ve(\eta) & -C_\ve(\eta)
\end{array}
\right]
\]
\[
= (-1)^m\det\bigl(\; 2\one-C_\ve(\eta)\;\bigr)\det C_\ve(\eta) .
\]
On the other hand
\[
C_\ve(\eta)= O(r)=O(|\eta|^2)
\]
so that
\begin{equation}
\det \eH(V^\ve,\eta)= O(|\eta|^{2m}).
\label{eq: det1}
\end{equation}
Thus, all the partial derivatives   at $0$ of order $< 2m$ of the function $\eta\mapsto \det \eH(V^\ve,\eta)$   are zero. Now observe that the family of functions $\bR^m\ni \eta \mapsto \det \eH(V^\ve, \eta)$ converges as $\ve\to 0$ in the the topology of $C^\infty(\bR^m)$ to  the function 
\[
\bR^m\ni \eta \mapsto \det \eH(V, \eta).
\]
The estimate (\ref{eq: det}) now follows from (\ref{det_explode})  coupled with  (\ref{eq: det1}).
\end{proof}

The above arguments, coupled with (\ref{eq: vve_appr}) prove a bit more, namely 
\begin{equation}
\sup_{|\eta|<1}|\eta|^{2m}\left|\,\frac{1}{\det \eH(V^\ve,\eta)} -\frac{1}{\det\eH(V,\eta)}\,\right|=O(\ve^N)\;\;\mbox{as $\ve\searrow 0$},\;\;\forall  N>0.
\label{eq: det_appr}
\end{equation}

\subsection{The behavior near the diagonal of the covariance form of $d\bsU^\ve$.}\label{ss: 54} Let 
\begin{equation}
\bsJ_m:=\bigl\{ -m,\dotsc, -1, 1,\dotsc, m\,\bigr\},\;\;\bsJ^\pm_m=\bigl\{\, i\in\bsJ_m;\;\;\pm i>0\,\bigr\}.
\label{jm}
\end{equation}
For any orthonormal basis  $\be_1,\dotsc,\be_m$ of $\bR^m$ set
\[
\tilde{\be}_{-i}=\be_i\oplus 0,\;\;\tilde{\be}_i=0\oplus \be_i,\;\;1\leq i\leq m.
\]
The collection $(\tilde{\be}_i)_{i\in\bsJ_m}$ is an orthonormal basis of $\bR^m\oplus \bR^m$. For $\eta\neq 0$ we denote by $\tsi_{i,j}^\ve=\tsi^\ve_{i,j}(\eta)$, $i,h\in\bsJ_m$  the entries of the  matrix  $\eH(V^\ve,\eta)^{-1}$ with respect to the basis $(\tilde{\be}_i)_{i\in\bsJ_m}$. These entries  satisfy the symmetry conditions
\begin{equation}
\tsi^\ve_{i,j}=\tsi^\ve_{-i,-j},\;\;\forall i,j\in \bsJ_m.
\label{eq: sinv1}
\end{equation}
Similarly, we denote by $\tsi_{i,j}^0=\tsi^0_{i,j}(\eta)$, $i,j\in\bsJ_m$  the entries of the  matrix  $\eH(V,\eta)^{-1}$ with respect to the basis $(\tilde{\be}_i)_{i\in\bsJ_m}$.  

The equality (\ref{eq: inv_block}) implies that  for $i,j>0$    and $\ve\geq 0$  we have the  equalities of matrices
\begin{subequations}
\begin{equation}
\bigl(\tsi^\ve_{i,j}\bigr)_{1\leq i,j\leq m}= \Bigl( A_\ve -B_\ve(\eta)A_\ve^{-1} B_\ve(\eta) \;\Bigr)^{-1},
\label{eq: sia}
\end{equation}
\begin{equation} \bigl(\tsi^\ve_{i,-j}\bigr)_{1\leq i,j\leq m}= -A_\ve^{-1}B_\ve(\eta) \Bigl( A_\ve -B_\ve(\eta)A_\ve^{-1} B_\ve(\eta)\;\Bigr)^{-1}.
\label{eq: sib}
\end{equation}
\end{subequations}

\begin{lemma} Fix  an orthonormal basis  $\be_1,\dotsc,\be_m$ of $\bR^m$. Then for   $\ve>0$ sufficiently small the matrix 
\[
\Bigl(\;V_{i,j,1,1}^\ve(0)\;\Bigr)_{1\leq i,j\leq m}=\Bigl(\;V_{1,1,i,j}^\ve(0)\;\Bigr)_{1\leq i,j\leq m}
\]
is invertible. Moreover
\begin{subequations}
\begin{equation}
\lim_{t\to 0} t^2\Bigl(\;\tsi_{i,j}^\ve(t\be_1)\;\Bigr)_{1\leq i,j\leq m}= \Bigl(\; V_{i,j,1,1}^\ve(0)\;\Bigr)_{1\leq i,j\leq m}^{-1},
\label{eq: silima}
\end{equation}
\begin{equation}
\lim_{t\to 0} t^2\Bigl(\;\tsi_{-i,j}^\ve(t\be_1)\;\Bigr)_{1\leq i,j\leq m}= -\Bigl(\; V_{i,j,1,1}^\ve(0)\;\Bigr)_{1\leq i,j\leq m}^{-1},
\label{eq: silimb}
\end{equation}
\end{subequations}
\underline{uniformly} for    sufficiently small positive $\ve$.
\label{lemma: silim}
\end{lemma}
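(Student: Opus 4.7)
The plan is to first check invertibility of $\bigl(V^\ve_{i,j,1,1}(0)\bigr)_{1\le i,j\le m}$ and then extract both limits by reusing the expansion of $A_\ve - B_\ve(\eta)A_\ve^{-1}B_\ve(\eta)$ that has already appeared in the proof of the preceding lemma.

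First I would combine (\ref{eq: vve_appr}) with (\ref{eq: d4}) specialized at $\eta=0$. This gives
\[
V^\ve_{i,j,1,1}(0) = h_m\bigl(\delta_{ij}+2\delta_{i1}\delta_{j1}\bigr) + O(\ve^N) = h_m\dot H + O(\ve^N),
\]
where $\dot H := \one_m + 2P_{\be_1} = \diag(3,1,\ldots,1)$ is the positive definite matrix appearing in (\ref{eq: sing-lim}). Since $h_m = f''(0) > 0$, the matrix is invertible for all sufficiently small $\ve$, with inverse $\frac{1}{h_m}\dot H^{-1} + O(\ve^N)$.

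For the two limits, I would set $\eta = t\be_1$ and $r=t^2/2$, and invoke formulas (\ref{eq: sia})--(\ref{eq: sib}), which express the relevant $m\times m$ blocks of $\eH(V^\ve,\eta)^{-1}$ in terms of $A_\ve = \bsH(-V^\ve,0)$ and $B_\ve(\eta) = \bsH(-V^\ve,\eta)$. The main input is the expansion
\[
A_\ve - B_\ve(\eta) A_\ve^{-1} B_\ve(\eta) = 2 h_m r\,\dot H + O(\ve^N r + r^2)
\]
already derived during the invertibility proof in the preceding lemma. Factoring $2h_m r\,\dot H$ out on the right and inverting via a geometric series yields
\[
\bigl(A_\ve - B_\ve A_\ve^{-1} B_\ve\bigr)^{-1} = \frac{1}{2 h_m r}\,\dot H^{-1}\bigl(\one_m + O(\ve^N + r)\bigr).
\]
Multiplying by $t^2 = 2r$ kills the $r^{-1}$ singularity and produces the limit $\frac{1}{h_m}\dot H^{-1}$, which by the previous paragraph coincides with $\bigl(V^\ve_{i,j,1,1}(0)\bigr)^{-1}$ modulo an $O(\ve^N)$ error; this establishes (\ref{eq: silima}). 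For (\ref{eq: silimb}) I would apply the same computation to (\ref{eq: sib}), using the additional factor $-A_\ve^{-1} B_\ve(\eta) = -\one_m + O(r + \ve^N)$, which supplies the overall minus sign in the limit.

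Uniformity of both limits in small $\ve$ comes essentially for free, because the $O$-constants in the above expansions are all inherited from (\ref{eq: vve_appr}) and Lemma \ref{lemma: dif-est} and are therefore independent of $\ve$. The only genuine obstacle is the bookkeeping required to separate the $r^{-1}$ singular leading behavior of $\eH(V^\ve,\eta)^{-1}$ from the $O(r^2)$ and $O(\ve^N r)$ subleading corrections, but that work has already been carried out in the preceding lemma, so the present statement reduces to reading off the leading coefficient in that expansion.
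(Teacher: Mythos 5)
Your overall strategy --- Taylor-expand $B_\ve(t\be_1)$ to second order, substitute into the block formulas (\ref{eq: sia})--(\ref{eq: sib}), and multiply by $t^2$ --- is exactly the paper's, and your invertibility argument is fine. But there is a genuine gap in how you set up the expansion: you replace the $\ve$-dependent second Taylor coefficient $\ddot{B}_\ve := \bigl(-V^\ve_{i,j,1,1}(0)\bigr)_{1\le i,j\le m}$ of $B_\ve(t\be_1)$ by its $\ve$-independent leading approximation $-h_m\dot{H}$ \emph{before} passing to the limit $t\to 0$. The $O(\ve^N)$ error this introduces does not vanish as $t\to 0$, so what your computation actually yields is
\[
\lim_{t\to 0}\, t^2\bigl(\tsi^\ve_{i,j}(t\be_1)\bigr) = \frac{1}{h_m}\dot{H}^{-1}\bigl(\,\one_m + O(\ve^N)\,\bigr),
\]
that is, (\ref{eq: silima}) only modulo an $O(\ve^N)$ error, not the exact identity the lemma asserts (and your phrase ``produces the limit $\frac{1}{h_m}\dot{H}^{-1}$'' already drops that persistent $O(\ve^N)$). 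The exact equality is actually used later: in Lemma \ref{lemma: O2} one combines (\ref{eq: silima}) with the exact inverse relation (\ref{eq: inv}) to conclude $\bbom^\ve_{1,j|k,\ell}(0)=0$ \emph{identically}, and that exact vanishing is what keeps the rescaled covariances $\bbom^{\ve,\eta}(\eta)$ bounded as $\eta\to 0$.

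The fix is to carry $\ddot{B}_\ve$ exactly, which is precisely what the paper does. Writing $B_\ve(t\be_1)=A_\ve+\frac{t^2}{2}\ddot{B}_\ve+O(t^4)$, with the $O(t^4)$-constant uniform in small $\ve$ (this is where (\ref{eq: vve_appr}) and Lemma \ref{lemma: dif-est} enter), one obtains
\[
A_\ve - B_\ve(t\be_1)A_\ve^{-1}B_\ve(t\be_1) = -t^2\ddot{B}_\ve + O(t^4),
\]
whose leading coefficient is the exact matrix $-\ddot{B}_\ve$, not an $\ve$-independent approximation. Inverting and multiplying by $t^2$ gives $t^2\bigl(\tsi^\ve_{i,j}(t\be_1)\bigr)=-\ddot{B}_\ve^{-1}+O(t^2)$, so the limit equals $\bigl(V^\ve_{i,j,1,1}(0)\bigr)^{-1}$ on the nose, and uniformity in $\ve$ follows from the uniformity of the $O(t^2)$-constant. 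The same adjustment handles (\ref{eq: silimb}).
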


\begin{proof} Observe that for any $N>0$ we have
\[
V_{i,j,1,1}^\ve(0)=V_{i,j,1,1}(0)+O(\ve^N),
\]
and
\[
\Bigl(\; V_{i,j,1,1}(0)\;\Bigr)_{1\leq i,j\leq m}\stackrel{(\ref{eq: d4})}{ =} \diag(3,1,\dotsc, 1)
\]
This proves that  the matrix
\[
\Bigl(\;V_{i,j,1,1}^\ve(0)\;\Bigr)_{1\leq i,j\leq m}
\]
is invertible for $\ve>0$ sufficiently small. Observe that
\[
\Bigl(\tsi_{i,j}^\ve(\eta)\;\Bigr)_{1\leq i,j\leq m}= \Bigl( A_\ve- B_{\ve}(\eta)A_{\ve}^{-1} B_\ve(\eta)\;\Bigr)^{-1},
\]
\[
B_{\ve}(\eta)=\bsH(-V^\ve(\eta)\,)= \Bigl(\;-V_{i,j}^\ve(\eta)\;\Bigr)_{1\leq i ,j\leq m},\;\;A_\ve= B_\ve(0),
\]
\[
B_\ve(t\be_1)  =B_\ve(0)+\frac{t^2}{2}\ddot{B}_\ve+O(t^4),\;\;\ddot{B}_\ve = \Bigl(\; -V_{i,j,1,1}^\ve(0)\;\Bigr)_{1\leq i ,j\leq m}.
\]
Above and  in  what follows, the constants implied by the $O$-symbol are \emph{independent} of $\ve$ sufficiently small. Hence
\[
B_\ve(t\be_1)B_\ve(0)^{-1} B_\ve(t\be_1)=  B_\ve(0)+ t^2\ddot{B}_\ve+ O(t^4),
\]
\[
B_\ve(0) -B_\ve(t\be_1)B_\ve(0)^{-1} B_\ve(t\be_1)= -t^2\ddot{B}_\ve+O(t^4),
\]
\[
\Bigl( A_\ve- B_{\ve}(t\be_1)A_{\ve}^{-1} B_\ve(t\be_1)\;\Bigr)^{-1}= -t^{-2}\ddot{B}_\ve +O(t^4),
\]
\begin{equation}
t^2\Bigl(\; \tsi_{ij}^\ve(t\be_1)\;\Bigr)_{1\leq i,j\leq m} =-\ddot{B}_\ve^{-1}+ O(t^2).
\label{eq: silimc}
\end{equation}
This proves (\ref{eq: silima}), including the uniform convergence. The equality (\ref{eq: silimb}) is proved in a similar fashion.
\end{proof}

 Denote by $R_\ve(t)$ the error in the approximation (\ref{eq: silimc}), i.e.,
\[
R_\ve(t)=t^2\Bigl(\; \tsi_{i,j}^\ve(t\be_1)\;\Bigr)_{1\leq i,j\leq m} +\ddot{B}_\ve^{-1}.
\]
A quick look at  the proof of  the above lemma shows that $R_\ve(t)$ converges to $R_0(t)$  as $\ve\searrow 0$ in the topology of $C^\infty(\, -t_0,t_0\,)$, where $t_0$ is some small positive number.   Moreover (\ref{eq: vve_appr}) implies that  
\[
\Vert R_\ve-R_0\Vert_{C^0(-t_0,t_0)}= O(\ve^N),\;\;\mbox{as $\ve\searrow 0$},\;\;\forall N\geq 0.
\]   
This observation has the following important consequence.

\begin{corollary}  For any $\ve>0$ sufficiently small the function
\[
\bR\setminus 0\ni t \mapsto t^2 \tsi^\ve_{i,j}(t\be_1)
\]
admits a \emph{smooth} extension to $\bR$.  Moreover, there exists $t_0>0$ such that for all $i,j\in\bsJ_m$ the smooth functions
\[
 (-t_0,t_0)\ni t \mapsto t^2\tsi^\ve_{i,j}(t\be_1)
 \]
 converge as $\ve\to 0$  in  the  topology of $C^\infty(-t_0,t_0)$ to the  smooth function
 \[
 (-t_0,t_0)\ni t\mapsto  t^2\tsi^0_{i,j}(t\be_1),
\]
and
\[
\sup_{|t|<t_0}\bigl\vert\, t^2\tsi^\ve_{i,j}-t^2\tsi^0_{i,j}\,\bigr\vert=O(\ve^N),\;\;\forall N\geq 0. \proofend
\]
\label{cor: silim}
\end{corollary}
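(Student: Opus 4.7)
\noindent The strategy is to exploit the evenness of $V^\ve$ along the line $\bR\be_1$ in order to rewrite the apparent singularity of $\tsi^\ve_{i,j}(t\be_1)$ at $t=0$ as a removable $0/0$ factorization in the auxiliary variable $s:=t^2/2$. In view of (\ref{eq: sia}), (\ref{eq: sib}), and the symmetry (\ref{eq: sinv1}), it suffices to establish the claim for the two matrix-valued expressions
\[
t\mapsto t^2\bigl(A_\ve-B_\ve(t\be_1)A_\ve^{-1}B_\ve(t\be_1)\bigr)^{-1}, \qquad t\mapsto -A_\ve^{-1}B_\ve(t\be_1)\cdot t^2\bigl(A_\ve-B_\ve(t\be_1)A_\ve^{-1}B_\ve(t\be_1)\bigr)^{-1}.
\]

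Since $V^\ve$ is even, each entry $V^\ve_{i,j}(t\be_1)$ is a smooth even function of $t$, so we may write $B_\ve(t\be_1)=\tilde B_\ve(t^2/2)$ for a map $s\mapsto \tilde B_\ve(s)$ smooth on a fixed neighborhood of $0$, with $\tilde B_\ve(0)=A_\ve$ and $\tilde B_\ve'(0)=\ddot B_\ve$. A direct computation using Taylor's theorem with integral remainder yields
\[
A_\ve-\tilde B_\ve(s)A_\ve^{-1}\tilde B_\ve(s)=s\,\Psi_\ve(s),\qquad \Psi_\ve(0)=-2\ddot B_\ve,
\]
with $\Psi_\ve$ smooth in $s$. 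By Lemma \ref{lemma: silim}, $\ddot B_\ve$ is invertible for $\ve$ small, so by continuity there exists $s_0>0$ (independent of $\ve<\ve_0$) on which $\Psi_\ve(s)$ remains invertible. Consequently
\[
t^2\bigl(A_\ve-B_\ve(t\be_1)A_\ve^{-1}B_\ve(t\be_1)\bigr)^{-1}=2\,\Psi_\ve(t^2/2)^{-1}
\]
extends smoothly across $t=0$ on $(-t_0,t_0)$ with $t_0:=\sqrt{2s_0}$, and multiplication by the smooth factor $-A_\ve^{-1}B_\ve(t\be_1)$ produces the analogous smooth extension of the off-diagonal block. Combined with (\ref{eq: sinv1}) this handles all index pairs in $\bsJ_m$.

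For the convergence statement, the estimate (\ref{eq: vve_appr}) asserts that $V^\ve\to V$ in $C^\infty$ on any fixed ball with all derivatives converging at rate $O(\ve^N)$ for every $N>0$. Consequently $\tilde B_\ve\to \tilde B_0$ and $\Psi_\ve\to \Psi_0$ in $C^\infty(-s_0,s_0)$ with sup-norm errors $O(\ve^N)$. Since matrix inversion is smooth on the open set of invertible matrices and the family $\{\Psi_\ve(s):|s|<s_0,\,\ve<\ve_0\}$ is relatively compact in that set, the $O(\ve^N)$ estimate is preserved under inversion; pulling back via $s=t^2/2$ then delivers both the asserted $C^\infty$ convergence and the $O(\ve^N)$ $C^0$-estimate.

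The only delicate point is the uniformity in $\ve$ of the invertibility neighborhood of $\Psi_\ve(s)$ around $s=0$, but this is harmless: by (\ref{eq: vve_appr}) and (\ref{eq: d4}), $\ddot B_\ve$ converges as $\ve\searrow 0$ to $-\mathrm{diag}(3h_m,h_m,\dotsc,h_m)$, which is invertible provided $h_m\neq 0$; the uniform neighborhood $s_0$ is then extracted from the continuity of inversion on this compact one-parameter family. Everything else is bookkeeping.
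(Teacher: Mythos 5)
Your proof is correct and follows essentially the same route as the paper: the paper deduces the corollary by inspecting the proof of Lemma \ref{lemma: silim} and noting that the remainder $R_\ve(t)$ in the expansion (\ref{eq: silimc}) is smooth in $t$ and converges rapidly in $\ve$ by (\ref{eq: vve_appr}). Your explicit change of variables $s=t^2/2$, the factorization $A_\ve-\tilde B_\ve(s)A_\ve^{-1}\tilde B_\ve(s)=s\Psi_\ve(s)$ with $\Psi_\ve(0)=-2\ddot B_\ve$ invertible, and the resulting identity $t^2\bigl(\tsi^\ve_{i,j}(t\be_1)\bigr)=2\Psi_\ve(t^2/2)^{-1}$ is a clean way of making transparent the smoothness and the $O(\ve^N)$ estimate that the paper asserts after a "quick look at the proof."
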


We will denote by $K^\ve_{ij}$ the entries  of the \emph{inverse} of the matrix
\[
\Bigl(\;V_{1,1,i,j}^\ve(0)\;\Bigr)_{1\leq i,j\leq m},
\]
so that
\begin{equation}
\sum_{a>0} V^\ve_{1,1,j,a}K_{ab}=\delta_{jb},\;\;\forall 1\leq j,b\leq m.
\label{eq: inv}
\end{equation}

 \subsection{Conditional hessians} \label{ss: 55} Fix $\vec{\Theta}:=(\vec{\theta},\vec{\vfi})\in\bT^m\times \bT^m$  set $\tau= \vec{\vfi}-\vec{\theta}$ and fix an orthonormal basis  $(\be_1,\dotsc,\be_m)$ of $\bR^m$ such that 
\[
\tau=|\tau|\be_1.
\]
We obtain  a basis  $(\tilde{\be}_i)_{i\in\bsJ_m}$ of $T_{\vec{\Theta}}\bT^m\times \bT^m$. 

Using these conventions we can regard  $\widetilde{\bsS}_\ve(\Theta)$ as a matrix with  entries $s_{ij}^\ve$, $i,j\in\bsJ_m$. The entries of the the matrix 
\[
(2\pi)^{-m}\ve^{-m-2} \widetilde{\bsS}_\ve(\Theta)^{-1}
\]
 are $\bsi^\ve_{a,b}(\tau^\ve)$, $a,b\in\bsJ_m$. For $i_1,\dotsc, i_k\in\bsJ_m$ we set
\[
\bsU^\ve_{i_1,\dotsc, i_k}:=\pa^k_{\tilde{\be}_{i_1}\dotsc\tilde{\be}_{i_k}}\bsU_\ve(\vec{\Theta}),\;\;V^\ve_{i_1,\dotsc, i_k}(\eta): =V^\ve_{|i_1|,\dotsc ,|i_k|}(\eta).
\]
We have the random matrix
\[
\widetilde{\bsH}^\ve=\left( \,\bsU^\ve_{i,j}\,\right)_{1\leq |i|,|j|\leq m}\in\Sym_m^{\times 2},
\]
and the conditional random matrix
\[
\widehat{\bsH}^\ve := \Bigl(\, \widetilde{\bsH}^\ve\,\bigl| \, d\bsU_\ve=0\,\Bigr)\in\Sym_m^{\times 2}.
\]
Both random matrices $\reh$ and $\coh$ admit block decompositions
\[
\reh=\reh_-\oplus \reh_+,\;\;\coh=\coh_-\oplus \coh_+
\]
corresponding to the partition $\bsJ_m=\bsJ_m^-\sqcup\bsJ_m^+$. We denote by $\widehat{\bsU}^\ve_{i,j}$ the entries of $\coh$ and we set
\[
\widetilde{\Omega}^\ve_{i,j|k,\ell}(\Theta) :=\bsE\bigl(\bsU^\ve_{i,j}\bsU^\ve_{k,\ell}),\;\; i,j,k,\ell \in\bsJ_m,
\]
\[
\widehat{\Xi}^\ve_{i,j|k\ell}(\Theta) :=\bsE\bigl(\,\widehat{\bsU}^\ve_{i,j}\widehat{\bsU}^\ve_{k,\ell}),\;\; i,j,k,\ell \in\bsJ_m.
\]
Using the regression formula \cite[Prop. 1.2]{AzWs}  we deduce 
\[
\widehat{\Xi}^\ve_{i,j|k,\ell}(\Theta)=\widetilde{\Omega}^\ve_{i,j|k,\ell}(\Theta)-(2\pi)^{m}\ve^{m+2}\sum_{a,b\in\bsJ_m} \bsE\bigl(\, \bsU^\ve_{i,j}\bsU^\ve_a\,\bigr)\tsi^\ve_{ab} \bsE\bigl(\,\bsU^\ve_b\bsU^\ve_{k,\ell}\,\bigr).
\]
Observe that
\begin{subequations}
\begin{equation}
\bsU^\ve_{i,j}=0,\;\;\mbox{if $i\cdot j<0$},
\label{eq: U1}
\end{equation}
\begin{equation}
\tom^\ve_{i,j|k,\ell}(\Theta)= 0,\;\;\mbox{if $i\cdot j<0$ or $k\cdot \ell<0$},
\label{eq: tom1}
\end{equation}
\begin{equation}
\tom^\ve_{i,j|k,\ell}(\Theta)=\tom^\ve_{-i,-j|-k-\ell}(\Theta),\;\;\forall i,j,k,\ell\in\bsJ_m.
\label{eq: tom2}
\end{equation}
\end{subequations}
Using (\ref{eq: covc}) we deduce that if $a,i,j>0$, then
\begin{subequations}
\begin{equation}
\bsE\bigl(\,\bsU^\ve_{i,j}\cdot \bsU^\ve_a\,\bigr)= \bsE\bigl(\,\bsU_{-i,-j}^\ve\cdot \bsU^\ve_{-a}\,\bigr)= (2\pi)^{-m}\ve^{-m-3} V^\ve_{i,j,a}(0),
\label{eq: 3cov0a}
\end{equation}
\begin{equation}
\bsE\bigl(\,\bsU^\ve_{-i,-j}\cdot \bsU^\ve_a\,\bigr)= -\bsE\bigl(\,\bsU_{i,j}^\ve\cdot \bsU^\ve_{-a}\,\bigr)= (2\pi)^{-m}\ve^{-m-3} V^\ve_{i,j,a}(\tau^\ve).
\label{eq: 3cov0b}
\end{equation}
\end{subequations}
Using (\ref{eq: covb})  and the fact that $V^\ve$ is an even function we deduce that  if $a,i,j>0$, then
\begin{equation}
\bsE\bigl(\, \bsU^\ve_{-i,-j}\bsU^\ve_{-a}\,\bigr)= \bsE\bigl(\, \bsU^\ve_{i,j}\bsU^\ve_{a}\,\bigr) = 0.
\label{eq: 3cov}
\end{equation}
Invoking (\ref{eq: cova}, \ref{eq: covb}) we deduce that if $i,j,k,\ell>0$, then
\[
\widetilde{\Omega}^\ve_{i,j|k,\ell}(\Theta)=  \widetilde{\Omega}^\ve_{-i,-j|-k,-\ell}= (2\pi)^{-m}\ve^{-4-m}V^\ve_{i,j,k,\ell}(0),
\]
while (\ref{eq: covc}, \ref{eq: covd}) imply that
\[
\widetilde{\Omega}^\ve_{-i,-j|k,\ell}(\Theta)=  \widetilde{\Omega}^\ve_{i,j|-k,-\ell}= (2\pi)^{-m}\ve^{-4-m}V^\ve_{i,j,k,\ell}(\tau^\ve).
\]
Hence if $i,j,k,\ell>0$ then
\begin{subequations}
\begin{equation}
(2\pi)^m\ve^{m+4} \widehat{\Xi}^\ve_{i,j|k,\ell}(\Theta)=V^\ve_{i,j,k,\ell}(0) -\sum_{a,b>0}V^\ve_{i,j,a}(\tau^\ve)V^\ve_{k,\ell, b}(\tau^\ve)\tsi^\ve_{a,b},
\label{eq: condhessa}
\end{equation}
\begin{equation}
(2\pi)^m\ve^{m+4} \widehat{\Xi}^\ve_{-i,-j|k,\ell}(\Theta)=V^\ve_{i,j,k,\ell}(\tau^\ve) +\sum_{a,b>0}V^\ve_{i,j,a}(\tau^\ve)V^\ve_{k,\ell, b}(\tau^\ve)\tsi^\ve_{a,-b},
\label{eq: condhessb}
\end{equation}
\begin{equation}
\widehat{\Xi}^\ve_{-i,-j|-k,-\ell}(\Theta)=\widehat{\Xi}^\ve_{i,j|k,\ell}(\Theta).
\label{eq: cndhessc}
\end{equation}
\end{subequations}
For $\eta\in\bR^m\setminus 0$   and $i,j,k,\ell>0$ we set
\begin{subequations}
\begin{equation}
\bbom^\ve_{i,j|k,\ell}(\eta)=\bbom^\ve_{-i,-j|-k,-\ell}(\eta):= V^\ve_{i,j,k,\ell}(0) -\sum_{a,b>0}V^\ve_{i,j,a}(\eta)V^\ve_{k,\ell, b}(\eta)\tsi^\ve_{a,b}(\eta),
\label{eq: bboma}
\end{equation}
\begin{equation}
\bbom^\ve_{-i,-j|k,\ell}(\eta):=V^\ve_{i,j,k,\ell}(\eta) +\sum_{a,b>0}V^\ve_{i,j,a}(\eta)V^\ve_{k,\ell, b}(\eta)\tsi^\ve_{a,-b}(\eta).
\label{eq: bbomb}
\end{equation}
\end{subequations}
The collection
\begin{equation}\label{barbbom}
\bbom^\ve(\eta):=(\bbom^\ve_{i,j|k,\ell})_{i,j,k,\ell\in \bsJ_m},\;\;\eta \in\bR^m\setminus 0,
\end{equation}
 describes the covariance forms of a gaussian measure $\Gamma_{\bbom^\ve(\eta)}$  on the space $\Sym_m^{\times 2}$.  By construction
 \begin{equation}
 \widehat{\Xi}^\ve(\Theta)=\frac{1}{(2\pi)^m\ve^{m+4} } \bbom^\ve\bigl(\, \tau^\ve(\Theta)\,\bigr).
 \label{eq: hat-bar}
 \end{equation}
For any $\eta\neq 0$ the Gaussian measure $\Gamma_{\bbom^\ve(\eta)}$   on $\Sym_m^{\times 2}$  describes a random matrix 
\[
B=B^+\oplus B^+, \;\;B^\pm\in \Sym_m 
\]
characterized as follows.

\begin{itemize}

\item The two components $B^\pm$ are identically distributed Gaussian random symmetric matrices.  

\item The covariance form  of the distribution of $B^+$ is given by $(\bbom^\ve(\eta)_{i,j|k,\ell})_{i,j,k,\ell\in\bsJ_m^+}$.    

\item The correlations   between the two components  $B^\pm$ are described by $(\,\bbom^\ve_{-i,-j|k,\ell}(\eta)\,)_{i,j,k,\ell\in\bsJ_m^+}$. 

\end{itemize}

The Gaussian  measure on $\Sym_m$ defined by  $(\bbom^\ve(\eta)_{i,j|k,\ell})_{i,j,k,\ell\in\bsJ_m^+}$ is invariant under the subgroup $O_\eta(m)$ consisting of orthogonal transformations of $\bR^m$ that fix $\eta$. In Appendix \ref{s: gmat} we give a more detailed description of the $O_\eta(m)$-invariant Gaussian measures on $\Sym_m$.

\subsection{Putting all the parts together}\label{ss: 56} From the  Kac-Rice formula we deduce that
\[
\rho_2^\ve(\Theta)= \frac{1}{(2\pi)^m\sqrt{\det  \widetilde{\bsS}_\ve(\,\tau^\ve(\Theta)\,)}}\int_{\Sym_m^{\times 2}} |\det B| \, \Gamma_{\widehat{\Xi}^\ve(\, \Theta\,)}(|dB|).
\]
From (\ref{eq: covdU}) we deduce 
\[
\frac{1}{\sqrt{\det  \widetilde{\bsS}_\ve(\eta)}}= \frac{(2\pi)^{m^2}\ve^{m(m+2)}}{\sqrt{\det \eH(V^\ve,\eta)}}.
\]
If $B\in\Sym_m^{\times 2}$ is a random matrix  distributed accoding to $\Gamma_{\widehat{\Xi}^\ve(\Theta)}$, then the equality  (\ref{eq: hat-bar}) shows that the random matrix $C=(2\pi)^{\frac{m}{2}}\ve^{\frac{m+4}{2}} B$ is distributed according to  $\Gamma_{\bbom^\ve(\tau^\ve(\Theta))}$. Observing that
\[
|\det B|=\frac{1}{(2\pi)^{m^2} \ve^{m(m+4)}}|\det C|.
\]
Putting all the above together we obtain the following important formula
\begin{equation}
\rho^\ve_2(\Theta)=\frac{\ve^{-2m}}{(2\pi)^m\sqrt{\det \eH(V^\ve,\tau^\ve(\Theta)\,)}} \int_{\Sym_m^{\times 2}} |\det B| \, \Gamma_{\bbom^\ve(\tau^\ve(\Theta)\,)}(|dB|).
\label{eq: rho2}
\end{equation}

\section{The proof of Theorem \ref{th: main}}\label{s: 6}
\setcounter{equation}{0}

\subsection{The off-diagonal behavior of $\rho_2^\ve$}\label{ss: 61}  For $\Theta\in\bR^m\times \bR^m$  we  define $\tau(\Theta)$ to be the unique vector in $[0,1)^m\subset \bR^m$ such that
\[
\tau(\Theta)-(\vec{\vfi}-\vec{\theta})\in\bZ^m.
\]
We set
\[
\Vert\tau(\Theta)\Vert:=\dist\bigr(\, (\vec{\vfi}-\vec{\theta},\bZ^m\,\bigr).
\]
The function
\[
\bR^m\times\bR^m\ni \Theta\mapsto \Vert\tau(\Theta)\Vert\in [0,\infty)
\]
descends to a  continuous function
\[
\bT^m\times\bT^m\ni \Theta\mapsto \Vert\tau(\Theta)\Vert\in [0,\infty).
\]
For $\hbar>0$ sufficietly small consider the     region
\[
\eC_\hbar:=\bigl\{ \,\Theta\in\bT^m\times\bT^m;\;\;\Vert\tau(\Theta)\Vert\geq \hbar\,\bigr\}.
\]
For $\hbar$ small the above set  $\eC_\hbar$ is the complement of a small tubular neighborhood  of the diagonal $\bsD$.  For $i,j,k,\ell\in\bsJ_m^+$ we  set
\[
\bbom^\infty_{i,j|k,\ell} =\bbom^\infty_{-i,-j|-k,-\ell}:= V_{i,j,k,\ell}(0),
\]
\[
\bbom^\infty_{-i,-j|k,\ell} =\bbom^\infty_{i,-j|k,-\ell}:=0.
\]
The  collection $(\bbom^\infty_{i,j|k,\ell})_{i,j,k,\ell\in\bsJ_m^+}$ describes the covariance form of  an $O(m)$-invariant Gaussian measure  on $\Sym_m$.   As explained in Appendix \ref{s: gmat}, there exists a two-parameter  family  $\bGamma_{u,v}$ of such measures on   $\Sym_m$. The equalities (\ref{eq: deriv2}) show that the measure   defined by  $(\bbom^\infty_{i,j|k,\ell})_{i,j,k,\ell\in\bsJ_m^+}$ corresponds to
\[
u=v =f''(0)\stackrel{(\ref{eq: sdh1})}{=} h_m=\int_{\bR^m} x_1^2x_2^2 w(|x|) dx.
\]
The collection $(\bbom^\infty_{i,j|k,\ell})_{i,j,k,\ell\in\bsJ_m^+}$   describes the product Gaussian measure 
\[
\Gamma_{\bbom^\infty}=\bGamma_{h_m,h_m}\times \bGamma_{h_m,h_m}.
\]
Statistically,  $\Gamma_{\bbom^\infty}$  describes a pair an independent     random   symmetric $m\times m$ matrices  each distributed  according to $\bGamma_{h_m,h_m}$.  We set
\[
\tilde{\eC}_\hbar=\bigl\{ \tau\in\bR^m;\;\dist(\tau,\bZ^m)\geq \hbar\,\bigr\}.
\]
Since $V$ is a Schwartz  function,   we deduce from (\ref{eq: vve}) that the functions $\tau\mapsto V^\ve(\frac{1}{\ve}\tau)$  and their derivatives converge \emph{uniformly} on $\tilde{\eC}_\hbar$ to $0$.  More precisely for any $K>0$  and any $N>0$ there exists $C=C(K,N,\hbar)$ such that
\[
|\pa^\alpha V^\ve(\tau/\ve)|\leq C \ve^N,\;\;\forall \tau\in\tilde{\eC}_\hbar, \;\;|\alpha|\leq K.
\]
This implies that for any $N>0$ we have the following   estimate, \emph{uniform} on $\eC_\hbar$
\begin{equation}
\bigl\Vert \eH\bigl(V^\ve,\ve^{-1}\tau(\Theta)\,\bigr)- \eH_\infty(V)\,\bigr\Vert= O(\ve^N)\;\;\mbox{as $\ve\searrow 0$},
\label{eq: cov-offd}
\end{equation}
where $\eH_\infty$ was defined in (\ref{hinfty}). This implies that 
\begin{equation}
\bigl| \det \eH\bigl(V^\ve,\ve^{-1}\tau(\Theta)\,\bigr)- \det \eH_\infty(V)\,\bigr|= O(\ve^N)\;\;\mbox{as $\ve\searrow 0$},
\label{eq: cov-offd1}
\end{equation}
uniformly in $\Theta\in\eC_\hbar$. The equalities   (\ref{eq: bboma}) and (\ref{eq: bbomb}) that  $\forall i,j,k,\ell\in\bsJ_m$ show that
\begin{equation}
\Bigl| \bbom^\ve_{i,j|k,\ell}\bigl(\ve^{-1}\tau(\Theta)\,\bigr)-\bbom^\infty_{i,j|k.\ell}\Bigr|= O(\ve^N)\;\;\mbox{as $\ve\searrow 0$},
\label{eq: cov-offd2}
\end{equation}
uniformly in $\Theta\in\eC_\hbar$. Recalling that $\tau^\ve(\Theta)=\ve^{-1}\tau(\Theta)$ and  $\eH_\infty(V)$ is invertible,  we deduce from   Proposition \ref{prop: diff-gauss},  (\ref{eq: cov-offd1}) and (\ref{eq: cov-offd2}) that as $\ve\searrow 0$
\begin{equation}
\begin{split}
\frac{\ve^{-2m}}{(2\pi)^m\sqrt{\det \eH(V^\ve,\tau^\ve(\Theta)\,)}} \int_{\Sym_m^{\times 2}} |\det B| \, \Gamma_{\bbom^\ve(\tau^\ve(\Theta)\,)}(|dB|)\\
= \frac{\ve^{-2m}}{(2\pi)^m\sqrt{\det \eH_\infty(V)}} \int_{\Sym_m^{\times 2}} |\det B| \, \Gamma_{\bbom^\infty}(|dB|) +O(\ve^{N-2m}),
\end{split}
\label{eq: cov-offd3}
\end{equation}
uniformly in $\Theta\in\eC_\hbar$. Arguing as above, using (\ref{eq: upsivea}) instead of   (\ref{eq: bboma}) and (\ref{eq: bbomb}),    we  deduce in similar fashion that as $\ve\searrow 0$ we have
\begin{equation}
\begin{split}
\frac{\ve^{-2m}}{(2\pi)^m\sqrt{\det \eH_\infty(V^\ve)}} \int_{\Sym_m^{\times 2}} |\det B| \, \Gamma_{\Upsilon^\ve\times \Upsilon^\ve}(|dB|)\\
= \frac{\ve^{-2m}}{(2\pi)^m\sqrt{\det \eH_\infty(V)}} \int_{\Sym_m^{\times 2}} |\det B| \, \Gamma_{\bbom^\infty}(|dB|) +O(\ve^{N-2m}),
\end{split}
\label{eq: cov-offd4}
\end{equation}
uniformly in $\Theta\in\eC_\hbar$.

Using (\ref{eq: tildro1}) and (\ref{eq: rho2}) we deduce that for any $N>0$
\begin{equation}
\rho_2^\ve(\Theta)-\tilde{\rho}^\ve_1(\Theta)= O(\ve^{N-2m}), \;\;\mbox{as $\ve\searrow 0$},
\label{eq: dif-ro}
\end{equation}
uniformly in $\Theta\in\eC_\hbar$.

\subsection{The behavior of the conditional hessians near the diagonal}\label{ss: 62}
Observe that  both functions  $\bbom^\ve_{ij|k,\ell}(\eta)$ and $\bbom^\ve_{-i,-j|k,\ell}(\eta)$ are even and smooth on $\bR^m\setminus 0$. Moreover, they  restrict to  smooth functions on each one-dimensional subspace of $\bR^m$.  Indeed, if  as usual we  pick an orthonormal basis  $\be_1,\dotsc,\be_m$ of  $\bR^m$ such  $\eta=t\be_1$, then for $t\neq 0$ we have,  
\[
\sum_{a,b>0}\frac{1}{t}V^\ve_{i,j,a}(t\be_1)\frac{1}{t}V^\ve_{k,\ell, b}(t\be_1) t^2\tsi^\ve_{a,b}(t\be_1),
\]
and each of the functions
\[
t\mapsto \frac{1}{t}V^\ve_{i,j,a}(t\be_1),\;\;t\mapsto t^2\bsi^\ve_{a,b}(t\be_1)
\]
extends to smooth  functions on $\bR$ satisfying
\[
\lim_{t\to 0}\frac{1}{t}V^\ve_{i,j,a}(t\be_1)= V^\ve_{ija1}(0),\;\; \lim_{t\to 0}t^2\bsi^\ve_{a,b}(t\be_1)= K^\ve_{ij}.
\]
Moreover, $V^\ve\to V$ in the natural topology of $C^\infty(\bR^m)$  we deduce  from Corollary \ref{cor: silim}  the following important result.

\begin{lemma} Let $t_0>0$ be as in Corollary \ref{cor: silim}.  For any $i,j,k,\ell>0$ the smooth functions
\[
(0,t_0)\ni t\mapsto  \bbom^\ve_{i,j|k,\ell}(t\be_1),\;\;(0,t_0)\ni t\mapsto \bbom^\ve_{-i,-j|k,\ell}(t\be_1),
\]
converge in the topology of $C^\infty(0,t_0)$   as $\ve\searrow 0$  to the  smooth functions
\[
(0,t_0)\ni t\mapsto \bbom^0_{i,j|k,\ell}(t\be_1):= V_{i,j,k,\ell}(0) -\sum_{a,b>0}V_{i,j,a}(t\be_1)V_{k,\ell, b}(t\be_1)\tsi^0_{a,b}(t\be_1),
\]
and
\[
(0,t_0)\ni  t\mapsto \bbom^0_{-i,-j|k,\ell}(t\be_1):=V_{i,j,k,\ell}(t\be_1) +\sum_{a,b>0}V_{i,j,a}(t\be_1)V_{k,\ell, b}(t\be_1)\tsi^0_{a,-b}(t\be_1).\proofend
\] 
\label{lemma: xilim}
\end{lemma}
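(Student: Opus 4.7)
The plan is to reduce the lemma to two ingredients already in the paper: the Schwartz-type approximation \eqref{eq: vve_appr} which gives $V^\ve \to V$ in $C^\infty(B_R)$ for every $R>0$, and Corollary \ref{cor: silim} which gives $C^\infty(-t_0,t_0)$-convergence of the rescaled entries $t^2\tsi^\ve_{a,b}(t\be_1)$ and $t^2\tsi^\ve_{a,-b}(t\be_1)$. Both limit quantities in the lemma are smooth on $(0,t_0)$ because $V$ is Schwartz and, by the nondegeneracy analysis in Subsection \ref{ss: 53}, $\eH(V,t\be_1)$ is invertible for $t\neq 0$; so there is nothing to check concerning smoothness of the target, only convergence.

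First I would record that by differentiating the Schwartz estimate \eqref{eq: vve_appr} we have, for every multi-index $\alpha$, the convergence $V^\ve_\alpha \to V_\alpha$ in $C^\infty(B_R)$ for every $R>0$. Restricting to the line $t\mapsto t\be_1$, this yields $V^\ve_{i,j,a}(t\be_1)\to V_{i,j,a}(t\be_1)$ and $V^\ve_{i,j,k,\ell}(t\be_1)\to V_{i,j,k,\ell}(t\be_1)$ in $C^\infty([-r,r])$ for every $r>0$, hence \emph{a fortiori} in $C^\infty(K)$ for any compact $K\subset(0,t_0)$.

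Next I would deduce from Corollary \ref{cor: silim} that $\tsi^\ve_{a,b}(t\be_1)\to\tsi^0_{a,b}(t\be_1)$ and $\tsi^\ve_{a,-b}(t\be_1)\to\tsi^0_{a,-b}(t\be_1)$ in $C^\infty(K)$ for every compact $K\subset(0,t_0)$. Indeed, on such a $K$ the function $t\mapsto t^{-2}$ is smooth and bounded together with all its derivatives, so dividing the $C^\infty(-t_0,t_0)$-convergent sequence $t^2\tsi^\ve_{a,b}(t\be_1)$ by $t^2$ preserves $C^\infty(K)$-convergence (by the Leibniz rule).

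Finally I would plug these two statements into the defining formulas \eqref{eq: bboma} and \eqref{eq: bbomb}. Since $C^\infty(K)$-convergence is preserved under finite sums and products, the composite expressions
\[
V^\ve_{i,j,k,\ell}(0)\;-\;\sum_{a,b>0} V^\ve_{i,j,a}(t\be_1)\,V^\ve_{k,\ell,b}(t\be_1)\,\tsi^\ve_{a,b}(t\be_1)
\]
and
\[
V^\ve_{i,j,k,\ell}(t\be_1)\;+\;\sum_{a,b>0} V^\ve_{i,j,a}(t\be_1)\,V^\ve_{k,\ell,b}(t\be_1)\,\tsi^\ve_{a,-b}(t\be_1)
\]
converge in $C^\infty(K)$ to the corresponding $\ve=0$ expressions, for every compact $K\subset (0,t_0)$. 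This is precisely convergence in the Fréchet topology of $C^\infty(0,t_0)$, completing the proof. No step presents a genuine obstacle; the lemma is an essentially mechanical consequence of Corollary \ref{cor: silim} combined with \eqref{eq: vve_appr}, which is why the only care required is to keep track of the fact that the factor $t^{-2}$ used to invert the rescaling is uniformly smooth away from $t=0$.
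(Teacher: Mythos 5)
Your proof is correct for the lemma as stated, and it is a mild variation of the paper's own argument rather than a radically different one. Both use the same two inputs: $V^\ve\to V$ in $C^\infty$ (from \eqref{eq: vve_appr}) and the $C^\infty(-t_0,t_0)$-convergence of $t^2\tsi^\ve_{a,b}(t\be_1)$ (Corollary \ref{cor: silim}). The difference is how the singular factor $t^{-2}$ is handled. You simply multiply the convergent sequence $t^2\tsi^\ve_{a,b}(t\be_1)$ by $t^{-2}$, which is smooth and bounded with all derivatives on each compact $K\subset(0,t_0)$, and conclude. The paper instead redistributes the factor as $\frac{1}{t}\cdot\frac{1}{t}$ onto the two $V^\ve$-factors, writing the sum as
\[
\sum_{a,b>0}\Bigl(\tfrac{1}{t}V^\ve_{i,j,a}(t\be_1)\Bigr)\Bigl(\tfrac{1}{t}V^\ve_{k,\ell,b}(t\be_1)\Bigr)\bigl(t^2\tsi^\ve_{a,b}(t\be_1)\bigr),
\]
and then uses that $V^\ve_{i,j,a}$ is \emph{odd} in $t$ so that each factor $\frac{1}{t}V^\ve_{i,j,a}(t\be_1)$ extends to a smooth function on all of $\bR$. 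The payoff of the paper's bookkeeping is that it yields convergence in $C^\infty$ up to and including $t=0$, a stronger fact than the $(0,t_0)$ claim in the statement; that stronger fact (that the $\bbom^\ve$ are smooth, even functions converging in $C^\infty$ through $t=0$) is what the proof of the subsequent Lemma \ref{lemma: O2} actually relies on. So your argument proves exactly what the lemma asserts and nothing more, while the paper's proof of the same lemma silently establishes the slightly stronger statement that the rest of Subsection \ref{ss: 62} needs. This is worth being aware of, but it is not a flaw in your proof of the lemma as written.
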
 
A more explicit  description of the covariances  $(\bbom^0_{i,j|k,\ell})_{i,j,k,\ell\in\bsJ_m}$ can be found in  Section \ref{ss: 63}. The above  result has an immediate consequence.
\begin{corollary}  As $\ve \searrow 0$ we have
\[
\bbom^\ve_{i,j|k,\ell}(\eta)=O(1),\;\;\forall  i,j,k,\ell\in\bsJ_m,
\]
\emph{uniformly} in $|\eta|\leq 1$. \qed
\label{cor: O1}
\end{corollary}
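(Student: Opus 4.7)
The plan is to show that $\bbom^\ve_{i,j|k,\ell}$ extends continuously across the singular set $\{\eta = 0\}$ of each summand in its definition, with bounds that are uniform in $\ve$. The main task is to exhibit enough cancellation between the potentially singular factor $\tsi^\ve_{a,b}(\eta)$ and the factors $V^\ve_{i,j,a}(\eta)$, $V^\ve_{k,\ell,b}(\eta)$ that multiply it in (\ref{eq: bboma})--(\ref{eq: bbomb}).

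First, I would reduce to a directional statement: for every unit vector $\bn\in\bR^m$, consider the one-parameter family $\eta = t\bn$ with $t\in (-t_0,t_0)$. By rotational covariance it suffices to treat $\bn = \be_1$ for the chosen orthonormal frame, and to bound the entries in that frame; the entries in any other fixed frame are orthogonal linear combinations (with coefficients of absolute value $\leq 1$) of those entries, so bounds transfer up to a constant depending only on $m$. For $|\eta|$ bounded away from zero, say $|\eta|\in[t_0/2,1]$, the matrix $\eH(V^\ve,\eta)$ is uniformly nondegenerate by (\ref{eq: lower}) and (\ref{eq: vve_appr}), hence all the ingredients depend continuously on $(\ve,\eta)$ on a compact set and $\bbom^\ve_{i,j|k,\ell}$ is trivially $O(1)$ there.

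Second, for $\eta = t\be_1$ with $|t|<t_0$ the point is to exploit two cancellations. Because $V^\ve$ is even, any odd-order derivative is an odd function of its argument, so the third derivatives satisfy $V^\ve_{i,j,a}(t\be_1)=t\cdot W^\ve_{i,j,a}(t)$ with $W^\ve_{i,j,a}\in C^\infty(\bR)$ and $W^\ve_{i,j,a}(0)=V^\ve_{i,j,a,1}(0)$. By Corollary \ref{cor: silim}, $t^2\tsi^\ve_{a,b}(t\be_1)$ and $t^2\tsi^\ve_{a,-b}(t\be_1)$ extend to smooth functions on $(-t_0,t_0)$ that converge in $C^\infty(-t_0,t_0)$ as $\ve\searrow 0$. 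Writing
\[
V^\ve_{i,j,a}(t\be_1)\,V^\ve_{k,\ell,b}(t\be_1)\,\tsi^\ve_{a,b}(t\be_1)
=\frac{V^\ve_{i,j,a}(t\be_1)}{t}\cdot \frac{V^\ve_{k,\ell,b}(t\be_1)}{t}\cdot\bigl(t^2\tsi^\ve_{a,b}(t\be_1)\,\bigr),
\]
each factor extends smoothly across $t=0$ and is uniformly bounded in $\ve$ small by (\ref{eq: vve_appr}) and Corollary \ref{cor: silim}. The same factorization, with $\tsi^\ve_{a,-b}$ in place of $\tsi^\ve_{a,b}$, handles the mixed block in (\ref{eq: bbomb}); the stand-alone term $V^\ve_{i,j,k,\ell}(t\be_1)$ in (\ref{eq: bbomb}) is manifestly smooth and uniformly bounded. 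Summing over $a,b$ (a fixed finite range) preserves the bound, so $\bbom^\ve_{i,j|k,\ell}(t\be_1)$ extends to a smooth function on $(-t_0,t_0)$, uniformly bounded in $\ve$ small.

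The main obstacle I anticipate is a bookkeeping one: verifying that the $|\eta|^{-2}$-type singularity of $\tsi^\ve_{a,b}(\eta)$ predicted by (\ref{eq: silima})--(\ref{eq: silimb}) occurs with the right tensorial structure to be absorbed by the factor $t^2$ produced from the odd functions $V^\ve_{i,j,a}$. Corollary \ref{cor: silim} resolves this cleanly by giving the smooth extension of $t^2\tsi^\ve_{a,b}(t\be_1)$ already packaged and uniformly controlled in $\ve$; the rest of the argument is then just the uniform version of Lemma \ref{lemma: xilim} together with the orthogonal-frame reduction. Combining the near-diagonal bound with the easy off-diagonal bound completes the proof of the uniform estimate on $\{|\eta|\leq 1\}$.
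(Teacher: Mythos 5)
Your proposal is correct and follows essentially the same path as the paper: the factorization of each summand into the three uniformly bounded, smoothly extendable factors $t^{-1}V^\ve_{i,j,a}(t\be_1)$, $t^{-1}V^\ve_{k,\ell,b}(t\be_1)$, and $t^2\tsi^\ve_{a,b}(t\be_1)$ is precisely the argument the paper gives in the discussion leading up to Lemma \ref{lemma: xilim}, after which the corollary is declared an immediate consequence. Your reduction to the line $\eta=t\be_1$ is in fact automatic in the paper, since the entries $\bbom^\ve_{i,j|k,\ell}$ are always computed in a frame adapted to $\eta$.
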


The above estimate can be    substantially improved  in some instances.

\begin{lemma}  Assume $\eta=t\be_1$, $t>0$. For any $i,j,k,\ell\in \bsJ_m$ and any $0<\ve\ll 1$ we have
\begin{equation}
\bbom^\ve_{\pm1,j|k,\ell}(0)=0.
\label{eq: O3}
\end{equation}
Moroever, as $\ve \searrow 0$  the smooth functions
\begin{equation}
(0,t_0)\ni t\mapsto t^{-2}\bbom^\ve_{\pm 1,j|k,\ell}(t\be_1)
\label{eq: O2}
\end{equation}
converge \emph{uniformly}  on $(0,t_0)$ to the smooth functions
\[
(0,t_0)\ni t\mapsto t^{-2}\bbom^0_{\pm 1,j|k,\ell}(t\be_1).
\]
\label{lemma: O2}
\end{lemma}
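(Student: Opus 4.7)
The plan is to Taylor expand $\bbom^\ve_{\pm 1, j|k,\ell}(t\be_1)$ at $t = 0$, show the would-be constant term cancels via the inverse-matrix identity (\ref{eq: inv}), and then deduce both the vanishing (\ref{eq: O3}) and the uniform convergence (\ref{eq: O2}) from smooth extensions of the constituent factors. By the covariance symmetry $\bbom^\ve_{i,j|k,\ell} = \bbom^\ve_{k,\ell|i,j}$ together with the sign-reversal symmetry $\bbom^\ve_{i,j|k,\ell} = \bbom^\ve_{-i,-j|-k,-\ell}$ built into (\ref{eq: bboma}), it suffices to treat two representative cases: (i) $\bbom^\ve_{1,j|k,\ell}$ with $j,k,\ell>0$, governed by (\ref{eq: bboma}); (ii) $\bbom^\ve_{-1,-j|k,\ell}$ with $j,k,\ell>0$, governed by (\ref{eq: bbomb}). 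All other sign patterns for a first index equal to $\pm 1$ reduce to one of these.

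For the expansion I would use the parity of $V^\ve$: since $V^\ve$ is even, $V^\ve_{1,j,a}$ is odd on $\bR^m$, so along the $\be_1$-axis
\[
V^\ve_{1,j,a}(t\be_1) = t\,V^\ve_{1,1,j,a}(0) + O(t^3),\qquad V^\ve_{k,\ell,b}(t\be_1) = t\,V^\ve_{1,k,\ell,b}(0) + O(t^3),
\]
while (\ref{eq: silima}) and (\ref{eq: silimb}) combined with the symmetry of $\tsi^\ve$ give $\tsi^\ve_{a,b}(t\be_1) = t^{-2} K^\ve_{a,b} + O(1)$ and $\tsi^\ve_{a,-b}(t\be_1) = -\,t^{-2} K^\ve_{a,b} + O(1)$, where $K^\ve=(M^\ve)^{-1}$ and $M^\ve_{a,b}=V^\ve_{1,1,a,b}(0)$. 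In case (i) these expansions yield
\[
\sum_{a,b > 0} V^\ve_{1,j,a}(t\be_1) V^\ve_{k,\ell,b}(t\be_1) \tsi^\ve_{a,b}(t\be_1) \;=\; \sum_{a,b > 0} M^\ve_{j,a}\, V^\ve_{1,k,\ell,b}(0)\, K^\ve_{a,b} \;+\; O(t^2),
\]
and the inverse-matrix identity (\ref{eq: inv}) collapses this double sum to $\sum_b V^\ve_{1,k,\ell,b}(0)\,\delta_{j,b} = V^\ve_{1,j,k,\ell}(0)$, exactly cancelling the constant term in (\ref{eq: bboma}). Case (ii) runs identically: the extra minus sign in $\tsi^\ve_{a,-b}$ pairs with the $+$ sign in (\ref{eq: bbomb}) and with $V^\ve_{1,j,k,\ell}(t\be_1)=V^\ve_{1,j,k,\ell}(0)+O(t^2)$ to produce the same cancellation. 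This proves (\ref{eq: O3}).

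For the smooth extension and uniform convergence claim (\ref{eq: O2}), I would rewrite each summand as
\[
\bigl[t^{-1} V^\ve_{1,j,a}(t\be_1)\bigr]\bigl[t^{-1} V^\ve_{k,\ell,b}(t\be_1)\bigr]\bigl[t^2 \tsi^\ve_{a,\pm b}(t\be_1)\bigr].
\]
Each bracketed factor extends smoothly to $(-t_0,t_0)$: the first two by the oddness of $V^\ve_{1,j,a}$, $V^\ve_{k,\ell,b}$ along the $\be_1$-line; the third by Corollary \ref{cor: silim}. Hence $\bbom^\ve_{\pm 1,j|k,\ell}(t\be_1)$ extends to a smooth even function of $t$ on $(-t_0,t_0)$, and combined with the vanishing just proved one gets $\bbom^\ve_{\pm 1,j|k,\ell}(t\be_1) = t^2 \phi^\ve(t)$ for some $\phi^\ve \in C^\infty(-t_0,t_0)$. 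Uniform convergence $\phi^\ve \to \phi^0$ on $(0,t_0)$ then follows because each factor converges uniformly on $(-t_0,t_0)$: $V^\ve \to V$ in $C^\infty_{\mathrm{loc}}$ by (\ref{eq: vve_appr}), and $t^2 \tsi^\ve_{a,\pm b}(t\be_1) \to t^2 \tsi^0_{a,\pm b}(t\be_1)$ in $C^\infty(-t_0,t_0)$ by Corollary \ref{cor: silim}.

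The only delicate point is the sign bookkeeping in case (ii): one must check that the minus sign emerging in (\ref{eq: silimb}) combines with the $+$ sign in (\ref{eq: bbomb}) to produce the same cancellation structure as the $+/-$ pair (\ref{eq: silima})/(\ref{eq: bboma}). Once this is verified, the remainder is routine Taylor expansion plus the $C^\infty$ convergence of the ingredients already established.
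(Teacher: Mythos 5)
Your proposal is correct and takes essentially the same route as the paper: reduce by symmetry to the two cases governed by (\ref{eq: bboma}) and (\ref{eq: bbomb}), Taylor-expand using the oddness of $t\mapsto V^\ve_{1,j,a}(t\be_1)$ and the limits (\ref{eq: silima}), (\ref{eq: silimb}), cancel the constant term via (\ref{eq: inv}), and then deduce the $t^2\phi^\ve(t)$ factorization and uniform convergence from evenness and $C^\infty$-convergence of the ingredients (Corollary \ref{cor: silim} and (\ref{eq: vve_appr})). One small wording caveat for the last step: ``each factor converges uniformly'' is not literally sufficient to push the convergence through the $t^{-2}$ division near $t=0$; what you actually need (and what you do invoke implicitly via $C^\infty$-convergence) is $C^2$-convergence of $t\mapsto\bbom^\ve(t\be_1)$ together with a Taylor-with-remainder argument, which is also what the paper relies on.
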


\begin{proof}  It suffices to show that
\begin{equation}
\bbom^\ve_{1,j|k,\ell}(0)=0,\;\;\forall j,k,\ell\in \bsJ_m,\;\; 0<\ve \ll 1,
\label{eq: O3a}
\end{equation}
because  the    similar equalities
\[
\bbom^\ve_{-1,j|k,\ell}(0)=0,\;\;\forall j,k,\ell\in \bsJ_m,\;\; 0<\ve \ll 1
\]
follow  from  (\ref{eq: O3a}) via the symmetry conditions (\ref{eq: tom1}, \ref{eq: tom2}) and the defining equations (\ref{eq: condhessa}, \ref{eq: condhessb}).

Note that (\ref{eq: O3a}) holds if $j<0$ or $k\cdot \ell <0$. We assume $j>0$ and we distinguish two cases.

\smallskip

\noindent {\bf A.} $k,\ell>0$. Using  the equality
\[
\bbom^\ve_{1,j|k,\ell}(t\be_1)=V^\ve_{1,j,k,\ell}(0)-\sum_{a,b>0}\frac{1}{t}V^\ve_{1,j,a}(t\be_1)\frac{1}{t}V^\ve_{k,\ell, b}(t\be_1) t^2\tsi^\ve_{a,b}(t\be_1).
\]
Letting $t\to 0$ and invoking (\ref{eq: silima}) and (\ref{eq: inv}) we deduce
\[
\bbom_{1,j|k,\ell}^\ve(0)= V^\ve_{1,j,k,\ell}(0)-\sum_{b>0}\; \underbrace{\left(\sum_{a>0}V^\ve_{1,1,j,a}(0)K^\ve_{ab}\right)}_{=\delta_{jb}}V^\ve_{1,b,k,\ell}(0)=0.
\]
\noindent {\bf B.} $k,\ell<0$. Use  the equalities (\ref{eq: bbomb}), (\ref{eq: silimb}) , (\ref{eq: inv}) and argue as in {\bf A.}

The second part of the lemma follows by observing that the smooth  functions 
\[
t\mapsto  \bbom^\ve_{i,j|k,\ell}(t\be_1),\;\;t\mapsto \bbom^\ve_{-i,-j;k,\ell}(t\be_1),\;\;1\leq i,j,k,\ell\leq m,
\]
are even and,  as $\ve\to 0$, they  converge  in the topology of $C^\infty(\bR)$ to  the \emph{even} functions
\[
t\mapsto  \bbom^0_{i,j|k,\ell}(t\be_1),\;\;t\mapsto \bbom^0_{-i,-j|k,\ell}(t\be_1),\;\;1\leq i,j,k,\ell\leq m.
\]

\end{proof}
 From the above lemma, Corollary \ref{cor: silim} and (\ref{eq: vve_appr}) we deduce
\begin{corollary}
\begin{subequations}
\begin{equation}
\sup_{|t|\leq 1}t^{-2}\bigl|\,  \bbom^\ve_{\pm 1,j|k,\ell}(t\be_1)- \bbom^0_{\pm 1,j|k,\ell}(t\be_1)\,\bigr|= O(\ve^N),\;\;\forall j\geq 1,\;\; \ell\geq k\geq 1,
\label{eq: silim1a}
\end{equation}
\begin{equation}
\sup_{|t|\leq 1}\bigl|\,  \bbom^\ve_{\pm i,j|k,\ell}(t\be_1)- \bbom^0_{\pm i,j|k,\ell}(t\be_1)\,\bigr|= O(\ve^N),\;\;\forall 1<i\leq j,\;\;1<k\leq \ell.
\label{eq: silim1b}
\end{equation}
\end{subequations}
\label{cor: silim1}\qed
\end{corollary}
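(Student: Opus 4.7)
The plan is to combine three ingredients: (i) the $C^k$-approximation $V^\ve = V + O(\ve^N)$ from \eqref{eq: vve_appr}; (ii) the uniform rescaled convergence $t^2\tsi^\ve_{a,b}(t\be_1) = t^2\tsi^0_{a,b}(t\be_1) + O(\ve^N)$ on $(-t_0,t_0)$ from Corollary \ref{cor: silim}; and (iii) the fact that third-order derivatives of an even function vanish at $0$, which yields a Hadamard factorization $V^\ve_{i,j,a}(t\be_1) = t\,\psi^\ve_{i,j,a}(t)$ with $\psi^\ve_{i,j,a}(t) = \int_0^1 V^\ve_{i,j,a,1}(st\be_1)\,ds$ smooth in $t$.

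Setting $\alpha^\ve_{a,b}(t) := t^2\tsi^\ve_{a,b}(t\be_1)$, I would rewrite
\[
\bbom^\ve_{i,j|k,\ell}(t\be_1) = V^\ve_{i,j,k,\ell}(0) - \sum_{a,b>0}\psi^\ve_{i,j,a}(t)\,\psi^\ve_{k,\ell,b}(t)\,\alpha^\ve_{a,b}(t),
\]
which exhibits $\bbom^\ve_{i,j|k,\ell}(t\be_1)$ as a smooth function of $t$ near $0$. Each factor on the right converges to its $\ve=0$ counterpart at rate $O(\ve^N)$ uniformly on $|t|\leq t_0$: for $V^\ve_{i,j,k,\ell}(0)$ by \eqref{eq: vve_appr}; for $\psi^\ve_{i,j,a}(t)$ by the same estimate applied under the integral; for $\alpha^\ve_{a,b}(t)$ by Corollary \ref{cor: silim}. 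Since all factors are uniformly bounded, \eqref{eq: silim1b} follows on $|t|\leq t_0$. On the annulus $t_0\leq|t|\leq 1$ the matrix $\eH(V^\ve,t\be_1)$ is uniformly invertible, so $\tsi^\ve_{a,b}(t\be_1)$ stays bounded and converges to $\tsi^0_{a,b}(t\be_1)$ at rate $O(\ve^N)$, and \eqref{eq: silim1b} follows there by the same argument, now without any $t^{-2}$.

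The extra $t^{-2}$ in \eqref{eq: silim1a} requires more care. By Lemma \ref{lemma: O2}, both $\bbom^\ve_{\pm 1,j|k,\ell}(t\be_1)$ and its $\ve=0$ counterpart vanish at $t=0$ and are even in $t$, so their difference vanishes to order $\geq 2$. I would promote the uniform $O(\ve^N)$ convergence of $\psi^\ve$ and $\alpha^\ve$ above to a $C^2$-convergence at the same rate, and apply Taylor's theorem with integral remainder to obtain
\[
t^{-2}\bigl(\bbom^\ve_{\pm 1,j|k,\ell} - \bbom^0_{\pm 1,j|k,\ell}\bigr)(t\be_1) = \int_0^1 (1-s)\,\pa_t^2\bigl(\bbom^\ve - \bbom^0\bigr)(st\be_1)\,ds,
\]
which is $O(\ve^N)$ uniformly on $|t|\leq t_0$; on $t_0\leq|t|\leq 1$ the factor $t^{-2}$ is bounded and the estimate reduces to \eqref{eq: silim1b}.

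The main technical obstacle is strengthening Corollary \ref{cor: silim} from the $C^0$-rate stated there to a $C^k$-rate at the same $O(\ve^N)$ order. This should follow by repeating its proof in $C^k$: the rescaled inverse $t^2\tsi^\ve_{a,b}(t\be_1)$ is a smooth function of finitely many derivatives $V^\ve_\alpha$ at $t\be_1$, matrix inversion is smooth in the entries on the region of uniform nondegeneracy supplied by Lemma \ref{lemma: silim}, and each $V^\ve_\alpha$ converges in every $C^k$ norm at rate $O(\ve^N)$ by \eqref{eq: vve_appr}.
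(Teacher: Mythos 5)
The proposal is correct and traces exactly the route the paper leaves implicit: the corollary is deduced from Lemma \ref{lemma: O2}, Corollary \ref{cor: silim} and \eqref{eq: vve_appr}, and that is precisely the combination you spell out. You are right that the $O(\ve^N)$ rate in Corollary \ref{cor: silim} is stated only in $C^0$ while the Taylor-remainder step needs a $C^2$-rate, and your fix is valid: $t^2\tsi^\ve_{a,b}(t\be_1)$ is the inverse of a smooth matrix-valued function of $t$ built out of finitely many derivatives $V^\ve_\alpha$, so \eqref{eq: vve_appr} together with the local Lipschitz continuity of matrix inversion on the nondegeneracy region supplied by Lemma \ref{lemma: silim} promotes the $C^0$-rate to a $C^k$-rate by the same argument the paper uses.
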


 \subsection{The behavior of $\rho_2^\ve$ in a neighborhood of the diagonal}\label{ss: 63}  Fix a point $\Theta_0$ on the diagonal $\bsD$. Without loss of generality we can assume that $\Theta_0=(0,0)\in(\bR^m/\bZ^m)\times (\bR^m/\bZ^m)$. Fix an open neighborhood $\eO_0$ of $(0,0)\in\bR^m\times\bR^m$ defined by 
\[
\eO_0=\bigl\{ (\vec{\theta},\vec{\vfi})\in\bR^m\times\bR^m;\;\;|\vec{\theta}\pm\vec{\vfi}|<\sqrt{2}\hbar\,\bigr\}.
\]
We regard $\eO_0$ as a neighborhood of $\Theta_0$ in $\bT^m\times \bT^m$.  Introduce a new system of orthogonal coordinates on $\eO_0$
\[
\vec{\omega}=\vec{\omega}(\Theta)= \frac{1}{\sqrt{2}}\bigl(\,\vec{\theta}+\vec{\vfi}\,\bigr),\;\;\vec{\nu}=\vec{\nu}(\Theta)= \frac{1}{\sqrt{2}}\bigl(\,\vec{\vfi}-\vec{\theta}\,\bigr).
\]
In these coordinates, the diagonal $\bsD\cap\eO_0$ is described  by  the equation
\[
\vec{\nu}=0.
\]
We have a natural projection
\[
\bpi:\eO_0\to \bsD \cap \eO_0,\;\; (\vec{\omega},\vec{\eta})\mapsto (\vec{\omega}, )\in\bsD\cap \eO_0.
\]
The projection $\bpi$ associates to a point $\Theta\in\eO_0$ the  (unique) point $\bpi(\Theta)\in\bsD\cap\eO_0$ clossest to $\Theta$.  The  vector $\vec{\nu}(\Theta)$    can be viewed as  a  vector in the  fiber  at $\bpi(\Theta)$  of the normal bundle $\eN_{\bsD}$.  We set
\[
\bsE_{\Upsilon^\ve\times \Upsilon^\ve}(|\det B|):=\int_{\Sym_m^{\times 2}}|\det B|\, d\Gamma_{\Upsilon^\ve\times \Upsilon^\ve}(|dB|),
\]
\[
\bsE_{\bbom^\ve(\tau^\ve(\Theta)\,)}(|\det B|):=\int_{\Sym_m^{\times 2}} |\det B| \, \Gamma_{\bbom^\ve(\tau^\ve(\Theta)\,)}(|dB|),
\]
where we recall that  the Gaussian measure  $\Gamma_{\Upsilon^\ve}$ is defined by (\ref{eq: upsivea}) and the Gaussian measure $\Gamma_{\bbom^\ve}$ is defined by (\ref{barbbom}).  In the coordinates $(\vec{\omega},\vec{\theta})$ we have $\tau(\Theta)=\sqrt{2}\vec{\nu}$.  Using (\ref{eq: tildro1}) and  (\ref{eq: rho2}) we deduce that on $\eO_0$ we have
\[
\rho^\ve_2(\Theta)-\tilde{\rho}_1^\ve(\Theta)
\]
\[
=\frac{\ve^{-2m}}{(2\pi)^m}\left(\underbrace{\frac{1}{\sqrt{\det\eH(\,V^\ve, \frac{\sqrt{2}}{\ve}\vec{\nu})}}\bsE_{\bbom^\ve(\,\frac{\sqrt{2}}{\ve}\vec{\nu}\,)}(|\det B|)}_{=:E_2^\ve(\frac{\sqrt{2}}{\ve}\vec{\nu})}\; -\; \underbrace{\frac{1}{\sqrt{\det\eH_\infty(V^\ve)}}\bsE_{\Upsilon^\ve\times \Upsilon^\ve}(|\det B|)}_{=:E_1^\ve}\right).
\]
The quantity $E_1^\ve$ is independed of  $\Theta$, while the quantity $E_2^\ve(\frac{\sqrt{2}}{\ve}\vec{\nu})$ depends only on the normal coordinate   $\vec{\nu}$.

\begin{lemma} There exists  a constant $C>0$ such that for any $\ve>0$ and any $|\vec{\eta}|$ sufficiently small we ahve
\begin{equation}
\bigl|\,E_2^\ve\bigl(\,\eta\,\bigr)\,\bigr|\leq C|\eta|^{2-m}
\label{eq: newt}
\end{equation}
\label{lemma: newt}
\end{lemma}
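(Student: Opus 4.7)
The strategy is to factor
\[
E_2^\ve(\eta) \;=\; \frac{1}{\sqrt{\det\eH(V^\ve,\eta)}}\,\cdot\,\bsE_{\bbom^\ve(\eta)}\bigl(|\det B|\bigr)
\]
and bound each factor separately. The determinant factor is handled immediately by the lower bound in (\ref{eq: det}), which gives $1/\sqrt{\det\eH(V^\ve,\eta)}\le C|\eta|^{-m}$ uniformly in $\ve$ small. It therefore remains to show
\[
\Psi^\ve(\eta):=\bsE_{\bbom^\ve(\eta)}\bigl(|\det B|\bigr)=O(|\eta|^2)\quad\text{uniformly in }\ve\ll 1.
\]

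By the $O(m)$-equivariance of the correspondence $\eta\mapsto\bbom^\ve(\eta)$, I may assume $\eta=t\be_1$ with $t>0$ small. Writing $B=B^-\oplus B^+$ gives $\det B=\det B^-\cdot\det B^+$, and expanding each factor along its first row,
\[
\det B^\pm \;=\; \sum_{j=1}^m (-1)^{j+1}\,B^\pm_{1,j}\,M^\pm_j,
\]
produces minors $M^\pm_j$ that are polynomials of degree $m-1$ in the entries $B^\pm_{k,\ell}$ with $k,\ell\ge 2$. The crucial input is Lemma \ref{lemma: O2}: by (\ref{eq: O3}) and the uniform boundedness of (\ref{eq: O2}), the six covariances
\[
\var(B^\pm_{1,j}),\qquad \cov(B^-_{1,j},B^+_{1,k})
\]
are all $O(t^2)$ uniformly in $\ve$ small. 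Meanwhile, Corollary \ref{cor: O1} ensures that all remaining covariances $\bbom^\ve_{i,j|k,\ell}(t\be_1)$ with $|i|,|j|,|k|,|\ell|\ge 2$ stay uniformly bounded as $\ve\to 0$ and $|t|\le 1$, so every polynomial moment of each minor $M^\pm_j$ is bounded uniformly in $(\ve,t)$.

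Combining the two expansions and applying Cauchy–Schwarz to each term yields
\[
\bsE\bigl[\,|B^-_{1,j}|\,|B^+_{1,k}|\,|M^-_j|\,|M^+_k|\,\bigr]\;\le\;\sqrt{\bsE[(B^-_{1,j})^2(B^+_{1,k})^2]}\;\cdot\;\sqrt{\bsE[(M^-_j)^2(M^+_k)^2]}\,.
\]
The Wick (Isserlis) formula applied to the centered Gaussian quadruple gives
\[
\bsE[(B^-_{1,j})^2(B^+_{1,k})^2]=\var(B^-_{1,j})\var(B^+_{1,k})+2\cov(B^-_{1,j},B^+_{1,k})^2=O(t^4),
\]
so the first square root is $O(t^2)$, while the second is $O(1)$ by the uniform minor bound. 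Summing over $j,k$ yields $\Psi^\ve(t\be_1)=O(t^2)$, and multiplication by $C|\eta|^{-m}$ produces (\ref{eq: newt}).

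The only subtle point is that every constant must be uniform in $\ve\to 0$; this uniformity is handed to us by Corollary \ref{cor: silim1} (uniform convergence of the rescaled diagonal covariances) together with the uniform determinant estimate (\ref{eq: det_appr}). I expect no further analytic obstacle beyond bookkeeping these uniform bounds.
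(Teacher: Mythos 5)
Your proof is correct but follows a genuinely different route from the paper's. After the shared reduction to bounding $\bsE_{\bbom^\ve(\eta)}(|\det B|)$ by $O(|\eta|^2)$, the paper introduces the rescaled matrix $B^\eta$ of (\ref{eta_rescale}), which conjugates each block $B^\pm$ by $\diag(|\eta|^{-1/2},1,\dots,1)$; this gives the clean algebraic identity $\det B = |\eta|^2\det B^\eta$, and Lemma \ref{lemma: O2} then shows that the rescaled covariance form $\bbom^{\ve,\eta}(\eta)$ extends continuously to $\eta=0$, so $\bsE(|\det B^\eta|)$ is uniformly bounded. You instead Laplace-expand $\det B^\pm$ along the first rows and control each cross term by Cauchy--Schwarz and Wick, pulling the factor $t^2$ out of $\bigl(\bsE[(B^-_{1,j})^2(B^+_{1,k})^2]\bigr)^{1/2}$. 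Both arguments rest on the same input --- (\ref{eq: O3}) and (\ref{eq: O2}) force every covariance carrying a $\pm1$ index to be $O(t^2)$ uniformly in small $\ve$ --- and produce the same bound; the paper's rescaling is slicker and is reused later in the proof of Lemma \ref{lemma: punch}, while your expansion is more elementary and self-contained. One small inaccuracy: for $j\ge 2$ the minor $M^\pm_j$ is \emph{not} a polynomial only in the entries $B^\pm_{k,\ell}$ with $k,\ell\ge 2$, because deleting row $1$ and column $j\ge 2$ still leaves column $1$ in the submatrix, so $M^\pm_j$ also involves the small entries $B^\pm_{k,1}=B^\pm_{1,k}$, $k\ge 2$. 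This does not break your argument, since Corollary \ref{cor: O1} bounds \emph{all} the covariances $\bbom^\ve_{i,j|k,\ell}(\eta)$ uniformly on $|\eta|\le 1$, so all moments of the minors remain $O(1)$ (indeed those terms acquire an extra, unused factor of $O(t)$), but the sentence as written should be corrected.
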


\begin{proof}  Using (\ref{eq: det}) we deduce that there exists a constant $C>0$ such that for any $\ve>0$  and $|\eta|$ sufficiently small
\begin{equation}
\frac{1}{C}|\eta|^{m}
\leq \sqrt{\det\eH^\ve(V^\ve, \eta)}\leq C|\eta|^{m},
\label{eq:  det-growth}
\end{equation}
Next, we need to estimate the  behavior of $\bsE_{\bbom^\ve(\,\eta\,)}(|\det B|)$ for $\eta$ small.

As explained at the end of Subsection \ref{ss: 53},  $\bbom^\ve(\eta)$ is  the covariance form  of  a of a Gaussian  measure on $\Sym^{\times 2}_m$. It describes a random  symmetric matrix  $B$ of the form
\[
B= B^+\oplus B^-,\;\;B^\pm\in\Sym_m,
\]
where the two components $B^\pm$ are  identically distributed. Their distribution is the Gaussian measure on $\Sym_m$  defined by the covariance form $(\bbom^\ve_{i,j|k,\ell})_{i,j,k,\ell\in\bsJ_m^+}$ detailed in (\ref{barbbom}). 

We now define a rescaling  $B^\eta$ of the random matrix $B$
\[
B^\eta=B^{-,\eta}\oplus B^{+,\eta},
\]
  where for $1\leq i\leq j\leq m$ we have
  \begin{equation}
  B^{\pm,\eta}_{ij}=\begin{cases}
  B^\pm_{ij}, & i>1,\\
  |\eta|^{-\frac{1}{2}}B^\pm_{1j}, &1=i<j,\\
|\eta|^{-1}B^\pm_{11}, & i=j=1.
  \end{cases}
  \label{eta_rescale}
  \end{equation}
 Observe that
 \begin{equation}
 \det B= |\eta|^2 \det B^\eta.
 \label{eq: resc_det}
 \end{equation}
 The rescaled matrix  $B^\eta$ is Gaussian,  with covariance form $\bbom^{\ve,\eta}=(\bbom^{\ve,\eta}_{i,j|k,\ell})_{i,j,k,\ell}$  described  by the equalities.
 \begin{equation}
 \begin{split}
 \bbom^{\ve,\eta}_{i,j|k,\ell}(\eta) = \bbom^{\ve,\eta}_{-i,-j|-k,-\ell}(\eta)=\bbom^\ve_{i,j|k,\ell}(\eta),\;\;\forall 1<i\leq j,\;\;1<k\leq \ell,\\
 \bbom^{\ve,\eta}_{1,j|k,\ell}(\eta) =\bbom^{\ve,\eta}_{-1,-j|-k,-\ell}(\eta)=|\eta|^{-\frac{1}{2}} \bbom^{\ve}_{1,j|k,\ell}(\eta),\;\;\forall j>1,\;\;\ell\geq k >1,\\
  \bbom^{\ve,\eta}_{1,j|1,\ell}(\eta) =\bbom^{\ve,\eta}_{-1,-j|-1,-\ell}(\eta)=|\eta|^{-1} \bbom^{\ve}_{1,j|1,\ell}(\eta),\;\;\forall j,\ell>1,\\
  \bbom^{\ve,\eta}_{1,1|1,\ell}(\eta) =\bbom^{\ve,\eta}_{-1,-1|-1,-\ell}(\eta)=|\eta|^{-\frac{3}{2}} \bbom^{\ve}_{1,1|1,\ell}(\eta),\;\;\forall \ell>1,\\
\bbom^{\ve,\eta}_{1,1|1,1}(\eta) =\bbom^{\ve,\eta}_{-1,-1|-1,-1}(\eta)=|\eta|^{-2} \bbom^{\ve}_{1,1|1,1}(\eta),
 \end{split}
 \label{eq: bbomet}
 \end{equation}
\begin{equation}
\begin{split}  \bbom^{\ve,\eta}_{-i,-j|k,\ell}(\eta) = \bbom^{\ve,\eta}_{i,j|-k,-\ell}(\eta)=\bbom^\ve_{-i,-j|k,\ell}(\eta),\;\;\forall 1<i\leq j,\;\;1<k\leq \ell,\\
 \bbom^{\ve,\eta}_{1,j|-k,-\ell}(\eta) =\bbom^{\ve,\eta}_{-1,-j|k,\ell}(\eta)=|\eta|^{-\frac{1}{2}} \bbom^{\ve}_{1,j|-k,-\ell}(\eta),\;\;\forall j>1,\;\;\ell\geq k >1,\\
  \bbom^{\ve,\eta}_{1,j|-1,-\ell}(\eta) =\bbom^{\ve,\eta}_{-1,-j|1,\ell}(\eta)=|\eta|^{-1} \bbom^{\ve}_{-1,-j|1,\ell}(\eta),\;\;\forall j,\ell>1,\\
  \bbom^{\ve,\eta}_{-1,-1|1,\ell}(\eta) =\bbom^{\ve,\eta}_{1,1|-1,-\ell}(\eta)=|\eta|^{-\frac{3}{2}} \bbom^{\ve}_{-1,-1|1,\ell}(\eta),\;\;\forall \ell>1,\\
\bbom^{\ve,\eta}_{-1,-1|1,1}(\eta) =\bbom^{\ve,\eta}_{1,1|-1,-1}(\eta)=|\eta|^{-2} \bbom^{\ve}_{-1,-1|1,1}(\eta).
 \end{split}
 \label{eq: bbometb}
 \end{equation}
 The above equalities coupled with Lemma \ref{lemma: O2} imply that  the limits
 \[
\bbom^{\ve,0}_{i,j|l,\ell}:= \lim_{|\eta|\to 0}\bbom^{\ve,\eta}_{i,j|k,\ell} (\eta)
 \]
 exist and are finite for any $i,j,k,\ell\in\bsJ_m$.  Thus the Gaussian measure $\Gamma_{\bbom^{\ve,\eta}(\eta)}$ converges as $|\eta|\to 0$ to   a  Gaussian measure\footnote{The limiting Gaussian measure  is degenerate, can be described explicitly, but  we will not need this  level of  detail.} $\Gamma_{\bbom^{\ve,0}}$.  Using (\ref{eq: resc_det})  we deduce
 \begin{equation}
 E_2^\ve(\eta)= E_{\bbom^{\ve}(\eta)}(|\det B|) =|\eta|^2 E_{\bbom^{\ve,\eta}}(\eta)(|\det B^\eta|)
 \label{eq: resc_cond_hess}
 \end{equation}
 so that
 \begin{equation}
 \lim_{|\eta|\to 0} |\eta|^{-2}E_{\bbom^{\ve}(\eta)}(|\det B|)= \bsE_{\bbom^{\ve,0}}(|\det C|)<\infty.
 \label{eq: exp-growth}
 \end{equation}
 The lemma now follows from the above equality coupled with the estimate (\ref{eq: det-growth}).
 \end{proof}

The estimate (\ref{eq:  newt}) shows that the function $E_2^\ve(\frac{\sqrt{2}}{\ve}\vec{\nu})$ is integrable on the tube $\eT_\hbar(\bsD)$.

\subsection{Proof of Theorem \ref{th: main}}\label{ss: 64} Using the notations in Section \ref{s: outline} we deduce
\[
\var^\ve=N_\ve+\int_{\bT^m\times \bT^m}\bigr(\rho_2^\ve(\Theta)-\tilde{\rho}_1^\ve(\Theta)\,\bigr)|d\Theta| 
\]
\[
=N_\ve+\int_{(\bT^M\times \bT^m)\setminus \eT_\hbar(\bsD)} \bigl(\,\rho^\ve_2(\Theta)-\tilde{\rho}_1^\ve(\Theta)\,\bigr)|d\Theta|+\int_{\eT_\hbar(\bsD)} \bigl(\,\rho^\ve_2(\Theta)-\tilde{\rho}_1^\ve(\Theta)\,\bigr)|d\Theta|.
\]
The estimate (\ref{eq: dif-ro}) implies 
\[
\int_{(\bT^M\times \bT^m)\setminus \eT_\hbar(\bsD)} \bigl(\,\rho^\ve_2(\Theta)-\tilde{\rho}_1^\ve(\Theta)\,\bigr)|d\Theta|=O(\ve^N),\;\;\forall N >0.
\]
Hence
\begin{equation}
\var^\ve=N_\ve+ \int_{\eT_\hbar(\bsD)} \bigl(\,\rho^\ve_2(\Theta)-\tilde{\rho}_1^\ve(\Theta)\,\bigr)|d\Theta|+O(\ve^N),\;\;\forall N>0.
\label{eq: varvediag}
\end{equation}
 For $\eta\in\bR^m$ we set
\[
\delta_\ve(\eta)= \begin{cases}
E_2^\ve(\eta)-E_1^\ve, & |\eta|\leq \frac{\hbar\sqrt{2}}{\ve}\\
0, &   |\eta| > \frac{\hbar\sqrt{2}}{\ve}.
\end{cases}
\]
Then
\[
\int_{\eT_\hbar(\bsD)} \bigl(\,\rho^\ve_2(\Theta)-\tilde{\rho}_1^\ve(\Theta)\,\bigr)|d\Theta|=\frac{\ve^{-2m}}{(2\pi)^m}{\rm vol}\,(\bsD)\int_{\bR^m} \delta_\ve\Bigl(\frac{\sqrt{2}}{\ve}\vec{\nu}\Bigr) |d\vec{\nu}|.
\]
If we make the change in variables $\eta=\frac{\sqrt{2}}{\ve}\vec{\nu}$,  and we observe that 
\[
{\rm vol}\,(\bsD)= 2^{\frac{m}{2}}\,{\rm vol}\,(\bT^m)= 2^{\frac{m}{2}},
\]
we deduce
\begin{equation}
\int_{\eT_\hbar(\bsD)} \bigl(\,\rho^\ve_2(\Theta)-\tilde{\rho}_1^\ve(\Theta)\,\bigr)|d\Theta|=\frac{\ve^{-m}}{(2\pi)^m}\int_{\bR^m} \delta_\ve(\eta) |d\eta|.
\label{eq: delve}
\end{equation}
\begin{lemma} For  any $\eta\in\bR^m\setminus 0$ the limit
\[
\delta_0(\eta)=\lim_{\ve\searrow 0}\delta_\ve(\eta)
\]
exists, the resulting function $\eta\mapsto \delta_0(\eta)$ is integrable on $\bR^m$ and  for any $N>0$
\begin{equation}
\lim_{\ve\searrow 0}\int_{\bR^m} \bigl(\,\delta_\ve(\eta)-\delta_0(\eta)\,\bigr)|d\eta|=O(\ve^N),\;\;\mbox{as $\ve\searrow 0$}.
\label{eq: del_lim}
\end{equation}
\label{lemma: punch}
\end{lemma}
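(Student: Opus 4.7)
The plan is to construct an explicit candidate limit $\delta_0$, verify that $\delta_0\in L^1(\bR^m)$, and then bound $\int|\delta_\ve-\delta_0|\,|d\eta|$ by splitting $\bR^m$ into a singular near-origin region, a fixed intermediate compact region, an expanding annular region $R\le|\eta|\le\hbar\sqrt 2/\ve$, and the exterior $|\eta|>\hbar\sqrt 2/\ve$ where $\delta_\ve\equiv 0$ by construction. At each step the $O(\ve^N)$ convergence will come either from the Schwartz decay in (\ref{eq: vve_appr}) (which is what makes $V^\ve-V$ super-polynomially small on any fixed compact set) or from Corollary~\ref{cor: silim1} near the origin.

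First I would define
\[
E_2^0(\eta):=\frac{1}{\sqrt{\det\eH(V,\eta)}}\,\bsE_{\bbom^0(\eta)}\bigl(|\det B|\bigr),\qquad E_1^0:=\frac{1}{\sqrt{\det\eH_\infty(V)}}\,\bsE_{\bGamma_\infty}\bigl(|\det B|\bigr),
\]
and set $\delta_0(\eta):=E_2^0(\eta)-E_1^0$ for $\eta\neq 0$. Pointwise convergence $\delta_\ve(\eta)\to\delta_0(\eta)$ is immediate from (\ref{eq: vve_appr}), Lemma~\ref{lemma: xilim}, the nondegeneracy of $\eH(V,\eta)$, and Proposition~\ref{prop: diff-gauss}. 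Integrability of $\delta_0$ follows from the same rescaling used in Lemma~\ref{lemma: newt} applied at $\ve=0$, which yields $|E_2^0(\eta)|\le C|\eta|^{2-m}$ on $|\eta|\le 1$ (integrable since $2-m>-m$), and from the Schwartz decay of $V$: each $V_\alpha(\eta)$ decays faster than any polynomial, so $\eH(V,\eta)-\eH_\infty(V)$ and the covariance entries $\bbom^0(\eta)-\bbom^\infty$ are $O(|\eta|^{-M})$ for every $M$, and Proposition~\ref{prop: diff-gauss} then gives $|\delta_0(\eta)|=O(|\eta|^{-M})$ for every $M$.

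For the quantitative $O(\ve^N)$ estimate I would treat the four regions as follows. On $|\eta|\le 1$, the rescaled matrices $B^\eta$ of (\ref{eta_rescale}) together with Corollary~\ref{cor: silim1} give $|\bbom^{\ve,\eta}(\eta)-\bbom^{0,\eta}(\eta)|=O(\ve^N)$ uniformly in $|\eta|\le 1$; combined with (\ref{eq: det_appr}) and Proposition~\ref{prop: diff-gauss} this yields the pointwise bound $|\delta_\ve(\eta)-\delta_0(\eta)|\le C_N\ve^N|\eta|^{2-m}$, whose integral is $O(\ve^N)$. On $1\le|\eta|\le R$, uniform $C^\infty$ convergence $V^\ve\to V$ with rate $O(\ve^N)$ on compacts together with uniform invertibility of $\eH(V,\eta)$ gives $|\delta_\ve-\delta_0|=O(\ve^N)$ pointwise and hence $O(\ve^N)$ after integration. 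On $R\le|\eta|\le\hbar\sqrt 2/\ve$, choosing $\hbar<1/\sqrt 2$ ensures that the nonzero lattice terms in (\ref{eq: vve}) satisfy $|\eta+\nu/\ve|\ge(1-\hbar\sqrt 2)/\ve$, so Schwartz decay of $V$ yields $|\pa^\alpha V^\ve(\eta)-\pa^\alpha V(\eta)|=O(\ve^N)$ uniformly in this annulus; combined with the Schwartz decay of $V$ itself and the uniform invertibility of $\eH$ for $|\eta|\ge R$, this gives $|\delta_\ve(\eta)-\delta_0(\eta)|\le C_{N,M}\,\ve^N(1+|\eta|)^{-M}$, whose integral over the annulus is $O(\ve^N)$ for any $N$. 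On $|\eta|>\hbar\sqrt 2/\ve$, $\delta_\ve\equiv 0$ and the Schwartz decay of $\delta_0$ yields $O(\ve^M)$ for any $M$.

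The hardest part will be the near-origin region: both $\det\eH(V^\ve,\eta)^{-1/2}$ and $\bsE_{\bbom^\ve(\eta)}(|\det B|)$ are singular as $|\eta|\to 0$ (blowing up and vanishing, respectively), and it is precisely the identity $\bbom^\ve_{\pm 1,j|k,\ell}(0)=0$ of Lemma~\ref{lemma: O2} together with the explicit rescaling (\ref{eta_rescale}) that keep the product $E_2^\ve(\eta)$ integrable. Upgrading Lemma~\ref{lemma: O2} from a pointwise limit to the \emph{uniform} $O(\ve^N)$ rate needed to control $\int_{|\eta|\le 1}|\delta_\ve-\delta_0|\,|d\eta|$ relies on Corollary~\ref{cor: silim1}, whose $C^0$-uniform convergence of the rescaled covariance entries (with super-polynomial rate in $\ve$) is the key quantitative input and the reason the whole argument produces $O(\ve^N)$ for every $N$ rather than a mere $o(1)$.
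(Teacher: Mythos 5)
Your proposal follows essentially the same route as the paper's proof: identify the candidate limit $\delta_0$ from (\ref{eq: delta0}), use the $\eta$-rescaling (\ref{eta_rescale}) and Lemma~\ref{lemma: O2}/Corollary~\ref{cor: silim1} to control the singular near-origin region, and exploit Schwartz decay of $V$ together with the super-polynomial approximation (\ref{eq: vve_appr}), (\ref{eq: vve0small}) and Proposition~\ref{prop: diff-gauss} off the origin. Your four-region decomposition is slightly finer than the paper's split into $|\eta|\le 1$, $1\le|\eta|\le\hbar\sqrt 2/\ve$, $|\eta|\ge\hbar\sqrt 2/\ve$ — separating a fixed compact $1\le|\eta|\le R$ from the expanding annulus — but that extra split is not actually needed, since the uniform $O(\ve^N)$ bound on the whole bounded region $|\eta|\le\hbar\sqrt 2/\ve$ already wins (the volume is only $O(\ve^{-m})$ and $N$ is arbitrary); this is the paper's $A_\ve$ estimate via (\ref{eq: detvve0}) and (\ref{eq: expve0}).
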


\begin{proof} As in  Remark \ref{rem: upsi0}  we deduce that as $\ve\searrow 0$  the Gaussian measure  $\Gamma_{\Upsilon^\ve}$  converges to the Gaussian measure  $\Gamma_{\Upsilon^0}$ given by the covariance equalities (\ref{eq: upsi00}) and
\[
\lim_{\ve\searrow 0} \frac{1}{\sqrt{\det\eH_\infty(V^\ve)}}\bsE_{\Upsilon^\ve\times \Upsilon^\ve}(|\det B|)= \frac{1}{\sqrt{\det\eH_\infty(V)}}\bsE_{\Upsilon^0\times \Upsilon^0}(|\det B|).
\]
From Lemma \ref{lemma: xilim} we deduce that
\[
\lim_{\ve\searrow 0}\frac{1}{\sqrt{\det\eH(V^\ve, \eta)}}\bsE_{\bbom^\ve(\,\eta\,)}(|\det B|)=\frac{1}{\sqrt{\det\eH(V, \eta)}}\bsE_{\bbom^0(\,\eta\,)}(|\det B|).
\]
Hence
\begin{equation}
\delta_0(\eta)= \frac{1}{\sqrt{\det\eH(V, \eta)}}\bsE_{\bbom^0(\,\eta\,)}(|\det B|)-\frac{1}{\sqrt{\det\eH_\infty(V^)}}\bsE_{\Upsilon^0\times \Upsilon^0}(|\det B|).
\label{eq: delta0}
\end{equation}
Observe that
\[
\lim_{|\eta|\to \infty} \eH(V,\eta)=\eH_\infty(V),\;\;\lim_{|\eta|\to \infty}\bbom^0(\eta)=\Upsilon^0\times\Upsilon^0.
\]
Since $V$ is a Schwartz function   we deduce  that the functions
\[
\eta \mapsto  \eH(V,\eta)-\eH_\infty(V),\;\; \eta\mapsto  \bbom^0(\eta)-\Upsilon^0\times\Upsilon^0
\]
 have  fast decay at $\infty$,   i.e., faster than any power $\eta\mapsto |\eta|^{-N}$, $N>0$. Invoking Proposition \ref{prop: diff-gauss} we deduce that the function $\delta_0(\eta)$ also has  fast decay at $\infty$ and thus it is integrable at $\infty$.
 
We now argue as in the proof  of Lemma \ref{lemma: newt}.  Using the rescaling (\ref{eta_rescale}) and the equality  (\ref{eq: resc_det}) we deduce   that
 \begin{equation}
  E_{\bbom^{0}(\eta)}(|\det B|) =|\eta|^2 E_{\bbom^{0,\eta}(\eta)}(|\det B^\eta|),
 \label{eq: resc_cond_hess0}
 \end{equation}
  where $\bbom^{0,\eta}(\eta)$ is defined  by the equalities (\ref{eq: bbomet}) and (\ref{eq: bbometb}) in which $\ve$ is globally replaced by  the superscript $0$.  From the computations  in  Appendix \ref{s: b}  we deduce that  the Gaussian measures  $\bbom^{0,\eta}(\eta)$  have a limit as $\eta \to 0$. Using (\ref{det_explode}) we conclude
\[
\delta_0(\eta)=O(|\eta|^{2-m})
\]
 for $|\eta|$ small. This establishes the integrability of $\delta_0$  at the origin.

To prove  (\ref{eq: del_lim}) we will show that for any $N>0$
 \begin{subequations}
 \begin{equation}
\int_{|\eta|\geq 1}\bigl(\, \delta_\ve(\eta)- \delta_0(\eta)\,\bigr) |d\eta|=O(\ve^N),\;\;\mbox{as $\ve\searrow 0$}.
 \label{eq: del_lima}
 \end{equation}
 \begin{equation}
\int_{|\eta|\leq 1} \bigl(\,\delta_\ve(\eta)-\delta_0(\eta) \,\bigr)|d\eta|=O(\ve^N),\;\;\mbox{as $\ve\searrow 0$}.
 \label{eq: del_limb}
 \end{equation}
 \end{subequations}
 \noindent {\bf Proof of (\ref{eq: del_lima}).}  Observe that
 \[
 \int_{|\eta|\geq 1} \bigl(\,\delta_\ve(\eta)- \delta_0(\eta) \,\bigr)|d\eta| =\underbrace{\int_{1\leq |\eta|\leq \frac{\hbar\sqrt{2}}{\ve}}\bigr(\,\delta_\ve(\eta)-\delta_0(\eta)\,\bigr)|d\eta|}_{=:A_\ve} \; - \;\underbrace{\int_{|\eta|\geq \frac{\hbar\sqrt{2}}{\ve}}\delta_0(\eta)|d\eta|}_{=:B_\ve}.
 \]
 Since $\delta_0$ has fast decay at $\infty$ we deduce that
 \[
 B_\ve=O(\ve^N),\;\;\mbox{as $\ve\searrow 0$}\;\;\forall N>0.
 \]
 Observe that
 \[
 A_\ve=\int_{1\leq |\eta|\leq \frac{\hbar\sqrt{2}}{\ve}}\Biggl(\, \frac{1}{\sqrt{\det\eH(V^\ve, \eta)}}\bsE_{\bbom^\ve(\,\eta\,)}(|\det B|)-\frac{1}{\sqrt{\det\eH(V, \eta)}}\bsE_{\bbom^0(\,\eta\,)}(|\det B|)\,\Biggr)|d\eta|.
 \]
 We have $\det \eH(V,\eta) >0$ for any $|\eta|\neq 0$ and
 \[
 \lim_{|\eta|\to\infty}\eH(V,\eta)=\eH_\infty(V),\;\;\det \eH_\infty(V)\neq 0.
 \]
  Hence there exists $c>0$ such that
  \[
  \det \eH(V,\eta)> c,\;\;\forall |\eta|>1.
 \]
 Observe that   for any  $N,k>0$ we have
 \begin{equation}
 \sup_{|\eta|\leq \frac{\hbar\sqrt{2}}{\ve}} \bigl|\,\pa^\alpha V^\ve(\eta)-\pa^\alpha V(\eta)\,\bigr|=O(\ve^N),\;\;\forall|\alpha|\leq k,\;\;\mbox{as $\ve\searrow 0$}.
 \label{eq: vve0small}
 \end{equation}
 Hence
  \begin{equation}
 \sup_{|\eta|\leq \frac{\hbar\sqrt{2}}{\ve}}\bigl|\, \det \eH(V,\eta)-\det\eH_\infty(V)\,\bigr|=O(\ve^N),\;\;\mbox{as $\ve\searrow 0$}.
\label{eq: detvve0}
\end{equation}
The estimate (\ref{eq: vve0small}) implies   that for any $N>0$
\[
 \sup_{|\eta|\leq \frac{\hbar\sqrt{2}}{\ve}} \bigl|\,\bbom^\ve(\eta)-\bbom^0(\eta)\,\bigr|=O(\ve^N),\;\;\mbox{as $\ve\searrow 0$}.
 \]
Using Proposition \ref{prop: diff-gauss} we deduce that for any $N>0$
\begin{equation}
\sup_{|\eta|\leq \frac{\hbar\sqrt{2}}{\ve}} \Bigl|\, \bsE_{\bbom^\ve(\,\eta\,)}(|\det B|)-\bsE_{\bbom^0(\,\eta\,)}(|\det B|)\,\Bigr|=O(\ve^N),\;\;\mbox{as $\ve\searrow 0$}.
\label{eq: expve0}
\end{equation}
The estimates (\ref{eq: detvve0}) and (\ref{eq: expve0}) imply that $A_\ve=O(\ve^N)$, $\forall N>0$.

\medskip

\noindent {\bf Proof of (\ref{eq: del_limb}).}  We have
\[
\int_{|\eta|\leq 1} \bigl(\,\delta_\ve(\eta)-\delta_0(\eta) \,\bigr)|d\eta|
\]
\[
=\int_{ |\eta|\leq 1}\Biggl(\, \frac{1}{\sqrt{\det\eH(V^\ve, \eta)}}\bsE_{\bbom^\ve(\,\eta\,)}(|\det B|)-\frac{1}{\sqrt{\det\eH(V, \eta)}}\bsE_{\bbom^0(\,\eta\,)}(|\det B|)\,\Biggr)|d\eta|
\]
\[
=\int_{ |\eta|\leq 1}\Biggl(\, \frac{|\eta|^2}{\sqrt{\det\eH(V^\ve, \eta)}}\bsE_{\bbom^{\ve,\eta}(\,\eta\,)}(|\det B|)-\frac{|\eta|^2}{\sqrt{\det\eH(V, \eta)}}\bsE_{\bbom^{0,\eta}(\,\eta\,)}(|\det B|)\,\Biggr)|d\eta|.
\]
Lemma \ref{lemma: O2} implies that the functions
\[
\eta\mapsto \bbom^{\ve,\eta}(\eta),\;\;\ve\geq 0
\]
are continuous  on $|\eta|\leq 1$ and  Corollary \ref{cor: silim1} implies that
\[
\sup_{|\eta|\leq 1}\bigl| \,\bbom^{\ve,\eta}(\eta)-\bbom^{0,\eta}(\eta)\,\bigr|=O(\ve^N)\\;\;\forall N>0.
\]
 Invoking Proposition \ref{prop: diff-gauss} we  deduce
 \[
 \bigl|\, \bsE_{\bbom^{\ve,\eta}(\,\eta\,)}(|\det B|)-\bsE_{\bbom^{0,\eta}(\,\eta\,)}(|\det B|)\,\bigr|=O(\ve^N)\;\;\forall N>0.
 \]
 Using  (\ref{eq: det_appr}) we deduce that
 \[
 \sup_{|\eta|\leq 1}|\eta|^m \left|\frac{1}{\sqrt{\det\eH(V^\ve, \eta)}}-\frac{1}{\sqrt{\det\eH(V, \eta)}}\right|= O(\ve^N),\;\;\forall N>0.
 \]
 We conclude that
 \[
 \left|\int_{|\eta|\leq 1} \bigl(\,\delta_\ve(\eta)-\delta_0(\eta) \,\bigr)|d\eta|\,\right| = O\left(\int_{|\eta|\leq 1} \ve^N|\eta|^{2-m}|d\eta|\,\right),\;\;\forall N>0.
 \]
 This completes the proof of Lemma \ref{lemma: punch}. \end{proof}
 
 Using (\ref{eq: delve}) and Lemma \ref{lemma: punch} in (\ref{eq: varvediag})  we deduce
\begin{equation}
\var^\ve= N_\ve +\frac{\ve^{-m}}{(2\pi)^m}\left(\int_{\bR^m} \delta_0(\eta) |d\eta|\right)\cdot \bigl(1+O(\ve^N)\,\bigr),\;\;\forall N>0,
\label{eq: punch}
\end{equation}
where $\delta_0$ is described by (\ref{eq: delta0}).

  \appendix
  
\section{Some technical inequalities}
\label{s: a}
\setcounter{equation}{0}

\begin{proof}[Proof of Lemma \ref{lemma: tech}] Set 
\[
\be_1^\dag :=(1,0, 0,\dotsc, 0),\;\;\be_2^\dag:=(0,1,0,\dotsc,0)\in\bR^m.
\]
 For $t\in\bR$  we have
\[
f(t^2/2)=V( t\be_1^\dag).
\]
 Using (\ref{eq: d2}) we deduce that  for any $t\in\bR$ we have
 \[
 f'(t^2)+t f''(t^2/2)=\frac{d}{dt^2}  V(t\be_1^\dag)=\frac{d^2}{dt^2} \int_{\bR^m} e^{-\ii tx_1} w(|x|) |dx|=- \int_{\bR^m} e^{-\ii tx_1} x_1^2 w(|x|) |dx|.
 \]
Then 
\[
|f'(0)|=-f'(0) = \int_{\bR^m}  x_1^2 w(|x|) |dx|
\]
and
\[
|f'(0)|+  f'(t^2)+t f''(t^2/2)= \int_{\bR^m}  (1-e^{-\ii tx_1})x_1^2 w(|x|) |dx|
\]
\[
= 2 \int_{\bR^m} \bigl(\sin(tx_1/2)\,\bigr)^2 x_1^2 w(|x|) |dx|
\]
\[
|f'(0)|- \bigl(\, f'(t^2)+t f''(t^2/2)\,\bigr)= \int_{\bR^m}  (1+e^{-\ii tx_1})x_1^2 w(|x|) |dx|
\]
\[
= 2 \int_{\bR^m} \bigl(\, \cos(tx_1/2)\,\bigr)^2 x_1^2 w(|x|) |dx|.
\]
This proved (\ref{eq: technical}). To prove (\ref{eq: technicalb}) observe that
\[
f(t^2/2+s^2/2)=V(t\be_1^\dag+s\be_2^\dag)
\]
and we deduce that
\[
f'(t^2/2)=\pa^2_s f(t^2/2+s^2/2)|_{s=0} =\pa^2_s V(t\be_1^\dag+s\be_2^\dag)|_{s=0} =
\]
\[
\pa^2_s\left(\int_{\bR^m} e^{-\ii(tx_1+sx_2)}  w(x)|dx|\right)_{s=0}= -\int_{\bR^m} x_2^2  e^{-\ii tx_1} w(x)|dx|.
\]
We now conclude as before.\end{proof}

Let $\bsV$ be a real Euclidean space  of dimension $N$.  We denote by $\eA(\bsV)$ the space of symmetric positive semidefinite operators $A:\bsV\to\bsV$.  For $A\in \eA(\bsV)$ we denote  by $\bgamma_A$ the centered   Gaussian measure on $\bsV$ with  covariance form $A$. Thus
\[
\bgamma_{\one}(d\bv)=\frac{1}{(2\pi)^{\frac{N}{2}}}e^{-\frac{1}{2}|\bv|^2} d\bv,
\]
and $\bgamma_A$ is the push forward of $\bgamma_\one$ via the linear map $\sqrt{A}$,
\begin{equation}
\bgamma_A=(\sqrt{A})_* \bgamma_{\one}.
\label{eq: push}
\end{equation}
For any  measurable $f:\bsV\to\bR$ with at most polynomial growth  we set
\[
\bsE_A(f)=\int_{\bsV} f(\bv) \bgamma_A(d\bv).
\]
\begin{proposition} Let $f:\bsV\to \bR$ be a locally  Lipschitz function which is positively homogeneous of degree $\alpha\geq 1$.   Denote by $L_f$ the  Lipschitz constant of the restriction of  $f$ to the unit ball of $\bsV$.  There exists a constant $C>0$ which depends only on $N$ and $\alpha$  such that, for any $\Lambda>0$  and any $A,B\in \eA(\bsV)$ such that $\Vert A\Vert,\Vert B\Vert\leq \Lambda$ we have
\begin{equation}
\bigl|\, \bsE_A(f)-\bsE_B(f)\,\bigr|\leq  CL_f \Lambda^{\frac{\alpha-1}{2}} \Vert A- B\Vert^{\frac{1}{2}}.
\label{eq: holder}
\end{equation}
\label{prop: diff-gauss}
\end{proposition}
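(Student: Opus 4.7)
The plan is to exploit the push-forward representation $\bgamma_A = (\sqrt{A})_*\bgamma_\one$ from (\ref{eq: push}) to rewrite
\[
\bsE_A(f) - \bsE_B(f) = \int_{\bsV} \bigl(\,f(\sqrt{A}\bv) - f(\sqrt{B}\bv)\,\bigr)\,\bgamma_\one(d\bv),
\]
and then to bound the integrand pointwise via three ingredients: a homogeneity-enhanced Lipschitz estimate for $f$, the classical operator H\"older inequality for the matrix square root, and finiteness of the Gaussian moments.

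First I would promote the unit-ball Lipschitz bound $L_f$ to the global quasi-Lipschitz estimate
\[
|f(x) - f(y)| \leq L_f \,\max(|x|,|y|)^{\alpha - 1}\,|x - y|, \qquad \forall x,y \in \bsV.
\]
This follows from a one-line rescaling: setting $R = \max(|x|,|y|)$, the points $x/R, y/R$ lie in the unit ball, so $|f(x/R) - f(y/R)| \leq L_f|x - y|/R$, and multiplying by $R^\alpha$ uses the positive homogeneity of degree $\alpha$. The hypothesis $\alpha \geq 1$ is exactly what makes $R^{\alpha - 1}$ well behaved.

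Second, I would invoke the operator-monotonicity of $t \mapsto t^{1/2}$ on $[0,\infty)$, which yields the Powers--St\o rmer inequality
\[
\bigl\Vert \sqrt{A} - \sqrt{B}\,\bigr\Vert \leq \Vert A - B\Vert^{\frac{1}{2}}
\]
for $A, B \in \eA(\bsV)$ (see e.g.\ Bhatia, \emph{Matrix Analysis}). This is the one nontrivial input; I would cite it rather than reprove it. Together with $\Vert\sqrt{A}\Vert, \Vert\sqrt{B}\Vert \leq \sqrt{\Lambda}$, it gives
\[
|f(\sqrt{A}\bv) - f(\sqrt{B}\bv)| \leq L_f\bigl(\sqrt{\Lambda}|\bv|\bigr)^{\alpha - 1}\bigl\Vert\sqrt{A} - \sqrt{B}\bigr\Vert |\bv| \leq L_f\,\Lambda^{\frac{\alpha - 1}{2}}\,\Vert A - B\Vert^{\frac{1}{2}}\,|\bv|^\alpha.
\]

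Finally, integrating against $\bgamma_\one$ and using that $C_{N,\alpha} := \int_{\bsV}|\bv|^\alpha \,\bgamma_\one(d\bv)$ is finite and depends only on $N$ and $\alpha$ yields (\ref{eq: holder}) with $C = C_{N,\alpha}$. The most delicate step is the matrix square-root H\"older inequality, but since it is a standard cited fact, the whole argument amounts to a short pointwise estimate plus a Gaussian moment computation.
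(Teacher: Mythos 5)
Your proof is correct and follows essentially the same route as the paper's: rewrite both expectations via the pushforward $\bgamma_A=(\sqrt{A})_*\bgamma_\one$, control $|f(\sqrt{A}\bv)-f(\sqrt{B}\bv)|$ by combining the degree-$\alpha$ homogeneity with the unit-ball Lipschitz constant and the operator inequality $\Vert\sqrt{A}-\sqrt{B}\Vert\leq\Vert A-B\Vert^{1/2}$, and then integrate using the finiteness of $\int|\bv|^\alpha\,\bgamma_\one(d\bv)$. The only cosmetic difference is that the paper first reduces to $\Lambda=1$ by the rescaling $\bsE_{tA}(f)=t^{\alpha/2}\bsE_A(f)$ and then applies the Lipschitz bound to the unit vectors $\sqrt{A}\bv/|\bv|$, whereas you fold the same homogeneity into a global quasi-Lipschitz estimate $|f(x)-f(y)|\leq L_f\max(|x|,|y|)^{\alpha-1}|x-y|$; these are the same calculation packaged differently.
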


\begin{proof}    We present the very elegant argument we learned from George Lowther on \href{http://mathoverflow.net/questions/130496/continuous-dependence-of-the-expectation-of-a-r-v-on-the-probability-measure}{MathOverflow}. In the sequel we will use the same  letter $C$ to denote various constant that depend only on $\alpha$ and $N$.

First of all let us observe that  (\ref{eq: push}) implies that
\[
\bsE_A(f)=\int_{\bsV} f(\sqrt{A}\bv) \bgamma_{\one}(d\bv).
\]
We deduce that for any $t>0$ we have
\[
\bsE_{tA}(f) =\int_{\bsV} f(\sqrt{tA}\bv) \bgamma_{\one}(d\bv)=t^{\frac{\alpha}{2}}\int_{\bsV} f(\sqrt{A}\bv) \bgamma_{\one}(d\bv)=t^{\frac{\alpha}{2}}\bsE_A(f),
\]
and thus it suffices to prove (\ref{eq: holder}) in the special case $\Lambda=1$, i.e. $\Vert A\Vert,\Vert B\Vert\leq 1$.    We have
\[
\bigl|\, \bsE_A(f)-\bsE_B(f)\,\bigr|\leq \int_{\bsV} \bigl|\, f(\sqrt{A}\bv)-f(\sqrt{B}\bv)\,\bigr| \bgamma_{\one}(d\bv)
\]
\[
=\int_{\bsV}|\bv|^\alpha \Bigl|\, f\Bigl(\sqrt{A}\frac{1}{|\bv|}\bv\Bigr)-f\Bigl(\sqrt{B}\frac{1}{|\bv|}\bv\Bigr)\,\Bigr| \bgamma_{\one}(d\bv)
\]
\[
\leq L_f \int_{\bsV}|\bv|^\alpha \Bigl|\,\sqrt{A}\frac{1}{|\bv|}\bv-\sqrt{B}\frac{1}{|\bv|}\bv\,\Bigr|\bgamma_{\one}(d\bv)
\]
\[
\leq L_f\Vert\sqrt{A}-\sqrt{B}\Vert \int_{\bsV}|\bv|^\alpha \bgamma_{\one}(d\bv)\leq CL_f\Vert A-B\Vert^{\frac{1}{2}}.
\]

\end{proof}

\section {Limiting conditional hessians} 
\label{s: b}   

We include here  a more  explicit description of the covariances $\bbom^0_{i,j|k,\ell}(\eta)$.  Again we  fix an orthonormal basis $(\be_i)_{1\leq i\leq m}$, such that $\eta=|\eta|\be_1$. Then
\begin{equation}
\begin{split}
\bbom^0_{i,j|k,\ell}(\eta):= V_{i,j,k,\ell}(0) -\sum_{a,b>0}V_{ija}(\eta)V_{k,\ell, b}(\eta)\tsi^0_{a,b}(\eta),\\
 \bbom^0_{-i,-j|k,\ell}(\eta):=V_{i,j,k,\ell}(\eta) +\sum_{a,b>0}V_{i,j,a}(\eta)V_{k,\ell, b}(\eta)\tsi^0_{a,-b}(\eta),
\end{split}
\label{xi063}
\end{equation}
where, according to (\ref{eq: sinv}), we have
\[
\bigl(\, \tsi^0_{a,b}(\eta)\,\bigr)_{1\leq a,b\leq m} = -R(\eta) \bsH(V,0),\;\;\bigl(\, \tsi^0_{-a,b}(\eta)\,\bigr)_{1\leq a,b\leq m} = R(\eta) \bsH(V,\eta),
\]
and $R(\eta)$ is defined in (\ref{eq: rinv}). The equalities (\ref{eq: hesv}) and  (\ref{eq: sinv}) show that
\[
\tsi^0_{a,b}(\eta)=\tsi^0_{-a,b}(\eta)=0,\;\;\forall 1,\leq a,b\leq m, \;\;a\neq b.
\]
Set  as usual
\[
r:=\frac{1}{2}|\eta|^2.
\]
  The symmetric random matrix $A^0$   defined by the covariances   $\bbom^0_{\pm i, \pm j|k,\ell}(\eta)$  is a direct sum of two $m\times m$ random symmetric matrices $A^0=A^0_-\oplus A^0_+$. We    divide   a symmetric  $m\times m$ array  of numbers into four regions $a,b,c,d$ as depicted in the left hand side of Figure \ref{fig: 2}.   The array   defined by $A^0$  has  a corresponding partition depicted  in the right-hands side of Figure \ref{fig: 2}.
  
\begin{figure}[ht]
\centering{\includegraphics[height=2.5in,width=4in]{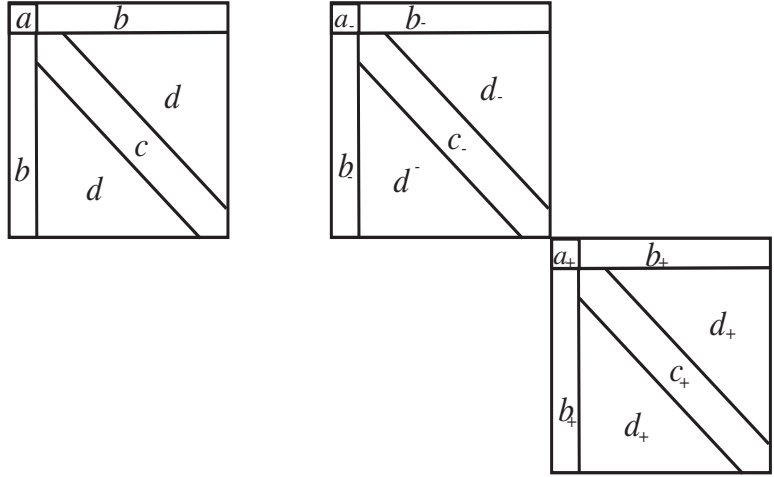}}
\caption{\sl Dividing a symmetric array into four parts.}
\label{fig: 2}
\end{figure}

If $u, v$   are two regions $u,v\in \{a_\pm,b_\pm,c_\pm,d_\pm\}$, then by  a $u-v$ correlation we mean  a correlation between   an entry of $A^0$ located in the region $u$ and an entry of $A^0$ located in the region $v$.

Using  (\ref{eq: rinv}) and (\ref{eq: sinv}) we deduce that   we have
\begin{equation}
\tsi_{1,1}^0(\eta)=\frac{-f'(0)}{f'(0)^2-f'(r)^2 -2|\eta|^2 f'(r)f''(r)-|\eta|^4f''(r)^2},\;\;V(\xi)=f(|\xi|^2/2).
\label{tsi11}
\end{equation}
The denominator of the above fraction admits a Taylor expansion ($r=\frac{|\eta|^2}{2}$)
\[
f'(0)^2-f'(r)^2 -2|\eta|^2 f'(r)f''(r)-|\eta|^4 f''(r)^2
\]
\[
=f'(0)^2 -\Biggl(\, f'(0) +\frac{f''(0)|\eta|^2}{2}  +\frac{f'''(0)|\eta|^4}{8}  \,\Biggr)^2
\]
\[
-2|\eta|^2\Bigl(f'(0)+\frac{f''(0)|\eta|^2}{2}\,\Bigr) \Bigl(f''(0)+\frac{f'''(0)|\eta|^2}{2}\Bigr)-|\eta|^4 f''(0)^2+O(|\eta|^6)
\]
\[
=-|\eta|^2\Biggl( 3f'(0)f''(0) + |\eta|^2\Bigl(\frac{9}{4}f''(0)^2+\frac{5}{4}f'(0)f'''(0)\,\Bigr)+O(|\eta|^4) \Biggr)
\]
Hence
\[
\tsi_{1,1}^0(\eta) =\frac{1}{3f''(0)|\eta|^2}\bigl( 1+c_{1,1}|\eta|^2+ O(|\eta|^4)\,\Bigr),\;\;c_{1,1}=-\frac{3}{4}\frac{f''(0)}{f'(0)}-\frac{5}{12}\frac{f'''(0)}{f''(0)}.
\]
Next,
\[
\tsi_{i,i}^0(\eta)=-\frac{f'(0)}{f'(0)^2-f'(r)^2}
\]
\[
=-\frac{f'(0)}{f'(0)^2 -\Bigl(\, f'(0) +\frac{f''(0)|\eta|^2}{2}  +\frac{f'''(0)|\eta|^4}{8}  \Bigr)^2+O(|\eta|^6)}.
\]
We conclude that 
\[
\tsi^0_{i,i}(\eta)= \frac{1}{f''(0)|\eta|^2}\frac{1}{ 1+\frac{1}{4}\Bigl(\frac{f'''(0)}{f''(0)}+\frac{f''(0)}{f'(0)}\Bigr)|\eta|^2+O(|\eta|^4)}
\]
\[
= \frac{1}{f''(0)|\eta|^2}\Bigl(1+c_0|\eta|^2+O(|\eta|^4)\,\Bigr),\;\; c_0=-\frac{1}{4}\Bigl(\frac{f'''(0)}{f''(0)}+\frac{f''(0)}{f'(0)}\Bigr),
\]
\[
\tsi_{-1,1}^0(\eta)=-\frac{f'(r)}{f'(0)} \tsi^0_{1,1}(\eta)
\]
\[
=-\frac{1}{3f''(0)|\eta|^2}\Bigl( 1+c_{1,1}|\eta|^2+ O(|\eta|^4)\,\Bigr)\Bigl( 1+\frac{f''(0)}{2f'(0)}|\eta|^2+O(|\eta|^4)\,\Bigr)
\]
\[
=-\frac{1}{3f''(0)|\eta|^2}\Bigl( 1+d_{1,1}|\eta|^2 +O(|\eta|^4)\,\Bigr),\;\;d_{1,1}=c_{1,1}+ \frac{f''(0)}{2f'(0)},
\]
\[
\tsi_{-i,i}^0(\eta)=-\frac{f'(r)}{f'(0)}\tsi^0_{i,i}
\]
\[
=-\frac{1}{f''(0)|\eta|^2} \Bigl( 1+d_{0}|\eta|^2 +O(|\eta|^4)\,\Bigr),\;\;d_{0}=c_{0}+ \frac{f''(0)}{2f'(0)}.
\]
\begin{center}
\ding{73}\ding{73}\ding{73}
\end{center}
 \[
 V_{1,1,1}(\eta)=3|\eta|f''(r)+|\eta|^3f'''(r)=|\eta|\Bigl(\, 3f''(0)+\frac{5}{2} |\eta|^2f'''(0) +O(|\eta|^4)\,\Bigr),
 \]
 \[
 V_{i,i,1}(\eta)=V_{i1i}(\eta)= |\eta|f''(r)=|\eta|\Bigl(\,f''(0)+\frac{1}{2}|\eta|^2 f'''(0) +O(|\eta|^4)\,\Bigr),
 \]
 \[
 V_{11i}(\eta)=0,\;\;i>1,
 \]
 \[
 V_{i,j,1}(\eta)=0,\;\;j>i>1,
 \]
 \[
 V_{i,j,k}(\eta)=0,\;\;i,j,k>1.
 \]
 
 \begin{center}
\ding{73}\ding{73}\ding{73}
\end{center}

\[
V_{i,j,k,\ell}(0)= \bigl(\, \delta_{ij}\delta_{k\ell}+ \delta_{ik}\delta_{j\ell} + \delta_{i\ell}\delta_{jk}\,\bigr)f''(0),
\]
\[
V_{1,1,1,1}(\eta) =3f''(r) + 6|\eta|^2 f'''(r) + |\eta|^4 f^{(4)}(r)=3f''(0)+\frac{15}{2}f'''(0)|\eta|^2+ O(|\eta|^4),
\]
\[
V_{1,1,1,i}(\eta)=0,\;\;i>1,
\]
\[
V_{1,1,i,i}(\eta)=V_{1,i,1,i}(\eta)=  f''(r)+|\eta|^2 f'''(r)=f''(0)+ \frac{3}{2}f'''(0)|\eta|^2+O(|\eta|^4),\;\;i>1,
\]
\[
V_{1,1,i,j}(\eta)=V_{1i1j}(\eta)=0,\;\;1<i<j,
\]
\[
V_{1,i,j,k}(\eta)=0,\;\;i,j,k>1.
\]
\begin{center}
\ding{86}\ding{86}
\end{center}
\[
V_{i,i,j,j}(\eta)=V_{ijij}(\eta)= f''(r),\;\;1<i<j,
\]
\[
V_{i,i,i,i}(\eta)=3f''(r),\;\;i>1,
\]
\[
V_{i,i,j,k}(\eta) = 0,\;\;  i>1,\; k>j>1,
\]
\[
V_{i,j,k,\ell}(\eta)=0,\;\; 1<i<j,\;\;1<k<\ell,\;\;(i,j)\neq (k,\ell). 
\]

\begin{center}
\ding{73}\ding{73}\ding{73}
\end{center}

If $1<i <j$, then
\[
 \bbom^0_{i,j|k,\ell}(\eta):= V_{i,j,k,\ell}(0) ,\;\;  \bbom^0_{-i,-j|k,\ell}(\eta):=V_{i,j,k,\ell}(\eta).
 \]
 
 \noindent $\bullet$  $d_+-d_+$   correlations. 
  \begin{subequations}
 \label{xiij}
 \begin{gather}
 \bbom^0_{i,j|k,\ell}(\eta)= 0,\;\;1<k\leq \ell,\;\;(i,j)\neq (k,\ell),\\
 \bbom^0_{i,j|i,j}(\eta) =V_{i,j,i,j}(0) -\sum_{a>0}V_{i,j,a}(\eta)V_{i,j, a}(\eta)\tsi^0_{a,a}(\eta)=V_{i,j,i,j}(0)=f''(0).
 \end{gather}
 \end{subequations}
 \noindent $\bullet$  $c_+-c_+$ correlations. If $1<i <j$, then
 \begin{subequations}
 \label{xcc}
 \begin{gather}
 \bbom^0_{i,i|j,j}(\eta)=V_{i,i,j,j}(0)-V_{1,i,i}(\eta)^2\tsi_{1,1}^0(\eta)= \frac{2}{3}f''(0) \Bigl(1+O(|\eta|^2)\Bigr),\\
 \bbom^0_{i,i|i,i}(\eta)=V_{i,i,i,i}(0)-V_{1,i,i}(\eta)^2\tsi_{1,1}^0(\eta)= \frac{8}{3}f''(0)\Bigl(1+O(|\eta|^2)\Bigr).
 \end{gather}
 \end{subequations}
 \noindent $\bullet$ The $a_+-a_+$, $a_+-b_+$, $a_+-c_+$ and $a_+-d_+$ correlations.  If $1<i <j$, then
 \begin{subequations}
\label{xi11}
 \begin{gather}
 \bbom^0_{1,1|1,1}(\eta)=  V_{1,1,1,1}(0)- V_{1,1,1}(\eta)^2\tsi^0_{1,1}(\eta) =\bar{c}_{1,1} |\eta|^2\bigl(1+O(\eta)^2\bigr),\\ \bar{c}_{1,1}=-9f'''(0)-3c_{1,1}f''(0),\\
  \bbom^0_{1,1|1,i}(\eta)=0,\;\;i>1,\\
 \bbom^0_{1,1|i,i}(\eta) =V_{1,1,i,i}(0)- V_{1,1,1}(\eta)V_{1,i,i}(\eta)\tsi^0_{1,1}(\eta) \sim const. |\eta|^2,\\
\bbom^0_{1,1| i,j}(\eta)=0.
\end{gather}
\end{subequations}
 \noindent $\bullet$  $b_+-b_+$ correlations. If $1<i <j$, then
\begin{subequations}
\label{b+b-}
\begin{gather}
 \bbom^0_{1,i|1,i}(\eta)= V_{1,i,1,i}(0)-V_{1,i,i}(\eta)^2\tsi^0_{i,i}(\eta)= \bar{c}_0|\eta|^2\Bigl(1+O(|\eta|^2)\Bigr),\\
 \bar{c}_0=-f'''(0)-f''(0)c_0,\\
 \bbom^0_{1,i|1,j}(\eta)= 0. \end{gather}
\end{subequations}
\noindent $\bullet$ The $b_+-c_+$ and $b_+-d_+$ correllations. If $1<i <j$, then
 \begin{subequations}
 \label{xi1i}
 \begin{gather}
 \bbom^0_{1,i|i,i}(\eta)=V_{1,i,i|i,i}(0)-\sum_{a} V_{1,i,a}(\eta) \tsi_{a,a}^0(\eta)V_{a,i,i}(\eta)=0,\\
  \bbom^0_{1,i| j,j}(\eta)=V_{1,i,j,j}(0)-\sum_{a} V_{1,i,a}(\eta) \tsi_{a,a}^0(\eta)V_{a,j,j}(\eta)=0,\\
   \bbom^0_{1,i|k,\ell }(\eta)= V_{i,j,k,\ell}(0)-\sum_{a} V_{1,i,a}\tsi_{a,a}^0(\eta)V_{a,k,\ell}(\eta)=0,\;\;\forall 1<k<\ell.
   \end{gather}
   \end{subequations}
  \noindent $\bullet$ The $a_--a_+$, $a_--b_+$, $a_--c_+$ and $a_--d_+$ correlations.  If $1<i <j$, then
 \begin{subequations}\label{xi-1}
 \begin{gather} 
 \bbom^0_{-1,-1|1,1}(\eta)=  V_{1,1,1,1}(\eta)+ V_{1,1,1}(\eta)^2\tsi^0_{-1,1}= \bar{d}_{1,1}|\eta|^2\Bigl( 1+O(|\eta|^2)\Bigr),\\
 \bar{d}_{1,1}=-3f''(0)d_{1,1}-\frac{3}{2}f'''(0),\\
 \bbom^0_{-1,-1|1,i}(\eta)=0,\;\;i>1,\\
 \bbom^0_{-1,-1| i,i}(\eta) =V_{1,1,i,i}(\eta)+V_{1,1,1}(\eta)V_{1,i,i}(\eta)\tsi^0_{-1,1}(\eta)=O( |\eta|^2),\\
 \bbom^0_{-1,-1| i,j}(\eta)= V_{1,1,i,j}(\eta) =0,\;\; 1<i<j.
 \end{gather}
 \end{subequations}
 \noindent $\bullet$  The $b_--b_+$, $b_--c_+$  and $b_--d_+$ correlations. 
 \begin{subequations}
 \label{b-d+}
 \begin{gather}
 \bbom^0_{-1,-i|1,i}(\eta)= V_{1,i,1,i}(\eta)+V_{1,i,i}(\eta)^2\tsi^0_{-i,i}(\eta)=\bar{d}_0|\eta|^2\Bigl( 1+O(|\eta|^2)\,\Bigr),\\
  \bar{d}_0=2f'''(0)-d_0f''(0),\;\;i>1,\\
 \bbom^0_{-1,-i| 1,j}(\eta)=0,\;\;1<i<j,\\
 \bbom^0_{-1,-i|j,j}(\eta)=0, \;\;i,j>1,\\
 \bbom^0_{-1,-i| j,k}(\eta) =V_{1,i,j,k}(\eta) =0,\;\;1<j<k,\;\;1<i.
 \end{gather}
 \end{subequations}
\noindent $\bullet$  The $c_--c_+$  and $c_--d_+$ correlations.
\begin{subequations}
\label{xi-i-j}
\begin{gather}
 \bbom^0_{-i,-i| j,j}(\eta)= V_{i,i,j,j}(\eta)+ V_{i,i,1}(\eta)^2\tsi^0_{-1,1}= \frac{2}{3}f''(0)\Bigl(1+O(|\eta|^2)\Bigr),\;\; 1<i<j, \\
 \bbom^0_{-i,-i|i,i}(\eta)=V_{i,i,i,i}(\eta)+ V_{i,i,1}(\eta)^2\tsi^0_{-1,1}(\eta)= \frac{8}{3}f''(0)\Bigl(1+O(|\eta|^2)\Bigr),\;\;i>1,\\
 \bbom^0_{-i,-i|j,k}=0,\;\;i>1, k>j>1.
 \end{gather}
 \end{subequations}
 \noindent $\bullet$ The $d_--d_+$ correlations.
 \begin{subequations}
 \begin{gather}
 \bbom^0_{-i,-j|i,j}(\eta)= V_{iijj}(\eta)=f''(r)=f''(0)\Bigl(1+O(|\eta|^2)\Bigr),\;\;1<i<j,\\
 \bbom^0_{-i,-j|k,\ell}(\eta)=0,\;\;1<i<j,\;\;1<k<\ell,\;\;(i,j)\neq (k,\ell).
 \end{gather}
 \end{subequations}
 
\section{Invariant random symmetric matrices}
\label{s: gmat}
\setcounter{equation}{0}

We denote by $\Sym_m$ the space of real symmetric   $m\times m$  matrices.   The group $O(m)$   acts by conjugation on $\Sym_m$. Would would like to describe  the collection $\eG_m$ of $O(m)$-invariant Gaussian measures on $\eS_n$.

Observe that $\Sym_m$ is an  Euclidean space with respect to the inner product
\begin{equation}
(A,B):=\tr(AB).
\label{nat-met}
\end{equation}
This inner product is invariant with respect to the action of $\SO(m)$ on $\Sym_m$. We set
\[
\widehat{\bsE}_{ij}:=\begin{cases}
\bsE_{ij}, & i=j\\
\frac{1}{\sqrt{2}}E_{ij}, & i<j.
\end{cases}.
\]
The collection  $(\widehat{\bsE}_{ij})_{i\leq j}$ is a basis  of $\Sym_m$ orthonormal with respect to the above inner product.  We  defined the \emph{normalized} entries
\begin{equation}
\hat{a}_{ij}:= \begin{cases}
a_{ij}, & i=j\\
\sqrt{2}a_{ij}, & i<j.
\end{cases}
\label{eq: norm-ent}
\end{equation}
The collection $(\hat{a}_{ij})_{i\leq j}$  the orthonormal basis of $\Sym_m\dual$ dual to $(\widehat{\bsE}_{ij})$.  The volume density induced by this metric is
\[
|dA|:=\Bigl\vert\prod_{i\leq j} d\widehat{a}_{ij}\,\Bigr\vert= 2^{\frac{1}{2}\binom{m}{2}}\Bigl\vert\,\prod_{i\leq j} da_{ij}\,\Bigr\vert.
\]
This volume density is $O(m)$-invariant.  Thus, the collection of $O(m)$-invariant Gaussian measures on $\Sym_m$  can be identified with the collection of positive definite $O(m)$-invariant quadratic forms  on $\Sym_m$.

The space of  $O(m)$-invariant  quadratic forms on $\Sym_m$ is   two dimensional and spanned  by the forms $\tr A^2$ and $(\tr A)^2$.  For any    real numbers  $u,v$ such that
\begin{equation}
v>0, mu+2v>0,
\label{eq: uv}
\end{equation}
 we  denote by  $d\bGamma_{u,v}(A)$ the centered Gaussian measure $d\bGamma_{u,v}(A)$ uniquely determined by the covariance equalities
 \[
 \bsE(a_{ij}a_{k\ell})=  u\delta_{ij}\delta_{k\ell}+ v(\delta_{ik}\delta_{j\ell}+ \delta_{i\ell}\delta_{jk}),\;\;\forall 1\leq i,j,.k,\ell\leq m.
 \]
 In particular we have
 \[
 \bsE(a_{ii}^2)= u+2v,\;\;\bsE(a_{ii}a_{jj})=u,\;\;\;\bsE(a_{ij}^2)=v,\;\;\forall 1\leq i\neq j\leq m,
 \]
while all other covariances are trivial.  We denote by $\Sym_m^{u,v}$ the space $\Sym_m$ equipped with the probability measure  $d\bGamma_{u,v}$  The  ensemble  $\Sym_m^{0,v}$ is  a rescaled version of of the Gaussian Orthogonal Ensemble  (GOE) and we will refer to it as $\GOE_m^v$.   

The  Gaussian measure $d\bGamma_{u,v}$ coincides with the Gaussian measure $d\bGamma_{u+2v,u,v}$ defined in \cite[App. B]{N2}.  We recall a few facts from   \cite[App. B]{N2}. 

The  probability density  $d\bGamma_{u,v}$  has the explicit description
\begin{equation}
d\bGamma_{u,v}(A)= \frac{1}{(2\pi)^{\frac{m(m+1)}{4}}  \sqrt{D(u,v)}} e^{-\frac{1}{4v}\tr A^2-\frac{u'}{2}(\tr A)^2} |dA|,
\label{eq: gamauv}
\end{equation}
 where
 \[
 D(u,v)= (2v)^{(m-1)+\binom{m}{2}}\bigl( mu +2v\,\bigr),
 \]
 and 
 \[
 u'=\frac{1}{m}\left(\frac{1}{mu+2v}-\frac{1}{2v}\right)=-\frac{u}{2v(mu+2v)}.
  \]
This shows that the  Gaussian measure  $d\Gamma_{u,v}$  is  $O(m)$-invariant.    Moreover the family $d\bGamma_{u,v}$, where $u,v$ satisfy (\ref{eq: uv}) exhausts $\eG_m$.

For $u>0$ the ensemble $\Sym_m^{u,v}$ can be given an alternate description.   More precisely   a random $A\in \Sym_m^{u,v}$ can be described as a sum
\[
A= B+ \ X\one_m,\;\;B\in \GOE_m^v,\;\; X\in \bsN(0, u),\;\;\mbox{ $B$ and $X$ independent}.
\]
We write  this
\begin{equation}
\Sym_m^{u,v} =\GOE_m^v\hat{+}\bsN(0,u)\one_m,
\label{eq: smgoe}
\end{equation}
where $\hat{+}$ indicates a sum of \emph{independent} variables.

In the special case  $\GOE_m^v$ we have $u=u'=0$  and  
\begin{equation}
d\bGamma_{0,v}(A)=\frac{1}{(2\pi v)^{\frac{m(m+1)}{4}} } e^{-\frac{1}{4v}\tr A^2} |dA|.
\label{eq: gov}
\end{equation}

Fix  a unit vector $\eta\in\bR^m$ and denote  by $O_\eta(m)$ the subgroup of  orthogonal  map $T:\bR^m\to\bR^m$ such that $T\eta=\eta$.    Group $O_\eta(m)$ continues to act by conjugation on $\Sym_m$ and we would like to describe the collection $\eG_{m,\eta}\supset \eG_m$ of $O_\eta(m)$-invariant Gaussian measures on $\Sym_m$.

The space of $O_\eta(m)$-invariant  quadratic forms on $\Sym_m$ is five dimensional and it is spanned by the quadratic forms\footnote{I learned this fact from \href{http://mathoverflow.net/questions/104751/invariants-of-symmetric-matrices}{Robert Bryant, on MathOverflow}.}
\[
\tr A^2,\;\;(\tr A)^2,\;\;(A^2\eta,\eta),\;\; (A\eta,\eta)^2,\;\;(\tr A)(A\eta,\eta).
\]
If we choose an orthonormal  basis $\be_1,\dotsc,\be_m$  of  $\bR^m$ such that $\eta=\be_1$,  then
\[
(A\eta,\eta)= a_{11},\;\; (A^2\eta,\eta)=\sum_{k=1}^m a_{1k}^2.
\]
If we block decompose a symmetric $m\times m$  matrix
\begin{equation}
A =\left[
\begin{array}{cc}
a_{11} & L^\dag\\
L & B
\end{array}
\right],
\label{eq: block}
\end{equation}
where $B\in\eS_{m-1}$, $b$ is a $(m-1)\times 1$ matrix, then we see that  a basis of the  space of  $O_\eta(m)$-invariant  is given by
\[
q_1(A)=(A\eta,\eta)^2 = \hat{a}_{11}^2,
\]
\[
 q_2(A)= 2|L|^2=\sum_{k=2}^m \hat{a}_{1k}^2=   2|A\eta|^2-2(A\eta,\eta)^2,
 \]
\[
q_3(A)=2\hat{a}_{11} \tr B=2 \hat{a}_{11} (\hat{a}_{22}+\cdots +\hat{a}_{mm})= 2(A\eta,\eta)\tr A-2(A\eta,\eta)^2,
\]
\[
 q_4(A)=(\tr B)^2 =(\hat{a}_{22}+\cdots +\hat{a}_{mm})^2=\bigl(\,\tr A- (A\eta,\eta),\bigr)^2
\]
\[
q_5(A)=\tr B^2= \tr A^2-2|L|^2 -a_{11}^2=\sum_{k=2}^m a_{kk}^2+ \sum_{2\leq i <j} \hat{a}_{ij}^2 .
\]
This suggests  dividing a  symmetric $m\times m$ array into four regions $a,b,c,d$ as in Figure \ref{fig: 1}

\begin{figure}[ht]
\centering{\includegraphics[height=2in,width=2in]{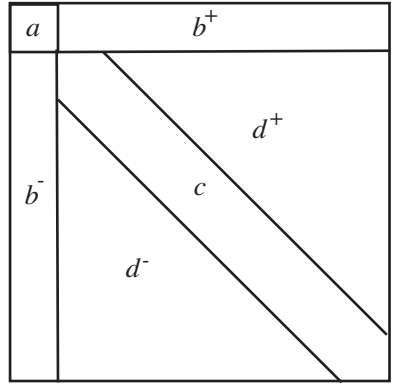}}
\caption{\sl Dividing a symmetric array into four parts.}
\label{fig: 1}
\end{figure}
More precisely, the parts  of these regions on or above the diagonal are
\[
a=\bigl\{\,(1,1)\,\bigr\},\;\; b^+= \bigl\{\, (1,j);\;\;j\geq 2\,\bigr\},\;\;c=\bigl\{\, (i,i);\;\;i\geq 2\,\bigr\},\;\; d^+=\bigl\{\, (i,j);\;\;i>j\geq 2\,\bigr\}.
\]
Suppose that $A$ is a Gaussian random symmetric  $m\times m$ matrix, where the Gaussian measure is $O_\eta(m)$-invariant. Then  the entries  withing the same region are identically distributed  Gaussian  variables with variance $v_a,\dotsc, v_d$.  Moreover, if  we fix regions $r_1,r_2\in \{a,b^+,c,d^+\}$, not necessarily distinct, and $x_1, x_2$ are   above or on the diagonal \emph{distinct normalized entries},  $x_1$ in the region $r_1$ and $x_2$ in the region $r_2$, then the covariance  $E(x_1,x_2)$ is independent of the location  of $x_1$ and $x_2$ in their respective regions, as long as $x_1\neq x_2$. We denote this covariance $\kappa_{r_1r_2}$.   Thus  a $O_\eta(m)$-invariant Gaussian measure on $\Sym_m$ is determined by a    variance vector
\[
\bv=(v_a,v_b,v_c,v_d),
\]
and a symmetric  covariance $4\times 4$ matrix
\[
K=(\kappa_{r_1r_2})_{r_1,r_2\in\{a,b,c,d\}}.
\]
When representing $K$ as a $4\times 4$ matrix we tacitly assume the order relation $a<b<c<d$.

The quantities $\bv$ and $K$  can be associated  to any $O_\eta(m)$-invariant quadratic form $q$, whether it is  positive semidefinite or not. We denote these quantities $\bv(q)$ and  $K(q)$.  Observe that
\[
\bv(xq+x'q')= x\bv(q)+x'\bv(q'),\;\;  K(xq+x'\bq')= xK(q)+x'K(q'),
\]
for any real numbers $x,x'$ and any  $O_\eta(m)$-invariant forms $q,q'$.  Observe that
\[
\bv(q_1)=(1,0,0,0),\;K(q_1)=0, 
\]
\[
\bv(q_2)=(0,1,0,0),\;\; K(q_2)=\left[
\begin{array}{cccc}
0 &  0& 0 & 0\\
0 & 0 & 0 &0 \\
0 & 0 & 0 & 0\\
0 & 0 & 0 & 0
\end{array}
\right],
\]
\[
\bv( q_3)=(0,0,0,0),\;\; K(q_3)=\left[
\begin{array}{cccc}
0 &  0& 1 & 0\\
0 & 0 & 0 & 0\\
1 & 0 & 0 & 0\\
0 & 0 & 0 & 0
\end{array}
\right],
\]
\[
\bv( q_4)=(0,0,1,0),\;\; K(q_4)=\left[
\begin{array}{cccc}
0 &  0& 0 & 0\\
0 & 0 & 0 &0 \\
0 & 0 & 1 & 0\\
0 & 0 & 0 & 0
\end{array}
\right],
\]
\[
 \bv( q_5)=(0,0,1,1),\;\; K(q_5)=0.
\]
Thus
\[
\bv( c_1q_1+c_2q_2+c_3q_3+c_4q_4+c_5 q_5)=(c_1, c_2,  c_5+c_4,c_5),
\]
\[
K( c_1q_1+c_2q_2+c_3q_3+c_4q_4+c_5 q_5)= \left[
\begin{array}{cccc}
0 &  0& c_3 & 0\\
0 & 0 & 0 & 0\\
c_3 & 0 &  c_4& 0\\
0 & 0 & 0 & 0
\end{array}
\right]
\]
We denote by $\Sym_m(c_1,\dotsc,c_5)$ the space $\Sym_m$ equipped with the $O_\eta(m)$-invariant  Gaussian measure with covariance 
\[
Q_{\vec{c}}:=c_1q_1+c_2q_2+c_3q_3+c_4q_4+c_5 q_5,
\]
whenever this quadratic form is positive semidefinite.  

For every  region $r\in \{a,b,c,d\}$ we denote by $\Sym_m(r)$ the vector subspace  of $\Sym_m$  consting of  matrices whose  entries  in regions other than $r$ are trivial.  We get an orthogonal direct sum decomposition
\[
\Sym_m=\Sym_m(a)\oplus \Sym_m(b)\oplus \Sym_m(c)\oplus \Sym_m(d).
\]
This leads to a corresponding decomposition
\[
A= A(a)+A(b)+ A(c) + A(d),  A\in\Sym_m.
\]
If  $A$  belongs to the ensemble $\Sym_m(c_1,\dotsc, c_5)$,  then

\begin{itemize}

\item   the components $A(a), A(b), A(c), A(d)$ are Gaussian  vectors, 

\item the component  $A(d)$ is independent of the rest of the other components, 

\item  the  $(m-1)\times (m-1)$-random matrix $A(c)+A(d)$ belongs to the ensemble $\Sym_{m-1}^{c_4,c_5}$.

\end{itemize}

To decide when $Q_{\vec{c}}\geq 0$ we need to  compute the   eigenvalues of the  the matrix  $\eQ_{\vec{c}}$  describing $Q_{\vec{c}}$  in  the  orthonormal coordinates $\hat{a}_{ij}$, $1\leq i\leq j\leq m$.    

For any $r\in\{a,b,c,d\}$ denote by $Q(r)$ the matrix   representing   the restriction of $Q_{\vec{c}}$ to $\Sym_m(r)$.   Moreover, for  any  positive integer $n$, we denote by $C_n$ the symmetric $n\times n$ matrix all whose  entries are equal to $1$.We have
\[
Q(a)= c_1,\;\;Q(d)=c_5 \one_N,\;\; N=\dim\Sym_{m-1}(d)=\frac{(m-1)(m-2)}{2},
\]
\[
Q(b)=c_2\one_{m-1},\;\; Q(c)=c_5\one_{m-1}+c_4 C_{m-1}
\]
Finally, denote by $L_n$ the $1\times n$ matrix whose entries are all equal to $1$.  With this notation we deduce that   $\eQ_{\vec{c}}$  has the block decomposition 
\[
\eQ_{\vec{c}}=\left[
\begin{array}{cccc}
Q(a) & c_2 L_{m-1} & c_3 L_{m-1} & 0\\
c_2 L_{m-1}^\dag& Q(b) & 0 & 0\\
c_3L_{m-1}^\dag & 0 & Q(c) & 0\\
0 & 0 & 0  & Q(d)
\end{array}
\right]=\left[
\begin{array}{cccc}
c_1 & 0 & c_3 L_{m-1} & 0\\
0&  c_2\one_{m-1} & 0 & 0\\
c_3L_{m-1}^\dag & 0 & c_5\one_{m-1}+c_4 C_{m-1}& 0\\
0 & 0 & 0  & c_5 \one_N\end{array}
\right].
\]
To compute the spectrum  of  $\eQ_{\vec{c}}$ it suffices to compute the spectrum of the matrix
\[
 \left[
\begin{array}{ccc}
c_1 & 0& c_3 L_{m-1} \\
0&  c_2\one_{m-1} & 0 \\
c_3L_{m-1}^\dag & 0 & c_5\one_{m-1}+c_4 C_{m-1}\end{array}
\right],
\]
viewed as a symmetric operator acting on the  space $\bR\oplus \bR^{m-1}\oplus\bR^{m-1}$. We see that  the subspace $0\oplus \bR^{m-1}\oplus 0$  is an invariant subspace of this operator  and its restriction to this subspace is $c_2\one_{m-1}$. Thus it suffices to find the  spectrum of the operator
\[
\bar{\eQ}_{\vec{c}}= \left[
\begin{array}{cc}
c_1 &  c_3 L_{m-1} \\

c_3L_{m-1}^\dag  & c_5\one_{m-1}+c_4 C_{m-1}\end{array}
\right]
\]
acting  on $\bsV=\bR\oplus \bR^{m-1}$.  A vector $\bv\in\bsV$ has a decomposition
\[
\bv=\left[
\begin{array}{c}
t\\
\bx
\end{array}
\right],\;\;\bx=\left[
\begin{array}{c}
x_1\\
\vdots\\
x_{m-1}
\end{array}\right].
\]
We set
\[
\si(\bx)=x_1+\cdots+x_{m-1},\;\;\bu=L_{m-1}^\dag=\left[
\begin{array}{c}
1\\
\vdots\\
1
\end{array}\right]\in\bR^{m-1}.
\]
We have
\begin{equation}
\bar{\eQ}_{\vec{c}} \left[
\begin{array}{c}
t\\
\bx
\end{array}
\right]=\left[
\begin{array}{c}
c_1 t+c_3\si(\bx)\\
c_3t\bu+ c_5\bx+ c_4\si(\bx)\bu.
\end{array}
\right].
\label{eq: large-mat}
\end{equation}
This  shows that the codimension $2$ subspace $\bsV_0\subset \bsV$  given by
\[
t=0,\;\;\si(\bx)=0,
\]
is $\bar{\eQ}_{\vec{c}}$-invariant, and the restriction of $\bar{\eQ}_{\vec{c}}$ to this subspace  is $c_5\one_{\bsV_0}$.   

An orthogonal  basis   of the orthogonal complement  $\bsW_0:=\bsV_0^\perp$ is given by the vectors
\[
\bv_0=\left[
\begin{array}{c}
1 \\
0
\end{array}
\right] ,\;\;\bv_1=\left[
\begin{array}{c}
0 \\
\bu
\end{array}
\right].
\]
Using (\ref{eq: large-mat}) we deduce
\[
\bar{\eQ}_{\vec{c}}(t_0\bv_0+t_1\bv_1)=\bigl(\, c_1t_0+(m-1)c_3t_1\,\bigr)\bv_0
\]
\[
+\bigr(\,c_2t_0+c_5t_1\bigl)\bv_1 +\bigl(\,c_3t_0+(c_5+(m-1)c_4) t_2\,\bigr)\bv_2.
\]
Thus in the basis $\bv_0,\bv_1,\bv_2$ of $V_0^\perp$ the  restriction of $\bar{\eQ}_{\vec{c}}$ to this subspace is given by the  $2\times 2$ matrix\footnote{The matrix $\Sigma_{\vec{c}}$ is not symmetric since the basis $\bv_0,\bv_1,\bv_2$ is not \emph{orthonormal}.}
\[
\Sigma_{\vec{c}}=\left[
\begin{array}{ccc}
c_1 &  (m-1)c_3\\
c_3 &  c_5 +(m-1)c_4
\end{array}
\right].
\]
Putting together all the above facts we obtain the following result.
\begin{proposition} The  quadratic form $Q_{\vec{c}}$ is positive definite if and only if  $c_1, c_2,c_5>0$  and  $\det\Sigma_{\vec{c}}>0$. Moreover
\begin{equation}
\det Q_{\vec{c}}= \Delta(\vec{c})= c_2^{m-1}c_5^{N+\dim \bsV_0}\det\Sigma_{\vec{c}}= c_2^{m-1}c_5^{\frac{(m-1)(m-2)}{2}+m-2} \det\Sigma_{\vec{c}}.
\label{eq: qc}
\end{equation}\qed
\label{prop: qc}
\end{proposition}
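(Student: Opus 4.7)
Essentially all the preparatory work has been done above; only the bookkeeping remains. The plan is to exploit the orthogonal, $\eQ_{\vec{c}}$-invariant decomposition
\[
\Sym_m = \Sym_m(b)\oplus\Sym_m(d)\oplus \bsV_0\oplus\bsV_0^\perp,
\]
(identifying $\Sym_m(a)\oplus\Sym_m(c)\cong\bsV=\bR\oplus\bR^{m-1}$ and using the analysis already carried out in the paragraphs preceding the proposition). Since the four summands are pairwise orthogonal and each is invariant under $\eQ_{\vec{c}}$, the determinant factors as a product and positive definiteness reduces to positivity of each restriction.

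For the determinant formula I would simply read off the contribution from each summand. On $\Sym_m(b)$, $\eQ_{\vec{c}}$ acts as $c_2\one_{m-1}$, contributing $c_2^{m-1}$; on $\Sym_m(d)$, it acts as $c_5\one_N$ with $N=(m-1)(m-2)/2$, contributing $c_5^{N}$; on $\bsV_0$, which has dimension $m-2$, the explicit formula $\bar{\eQ}_{\vec{c}}(0,\bx)=(0,c_5\bx)$ for $\sigma(\bx)=0$ shows the action is $c_5\one$, contributing $c_5^{m-2}$; finally, on $\bsV_0^\perp$ (dimension $2$) the operator is represented in the basis $\bv_0,\bv_1$ by $\Sigma_{\vec{c}}$, and since the determinant of a linear operator is basis-invariant, the contribution is $\det\Sigma_{\vec{c}}$. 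Multiplying these four factors yields (\ref{eq: qc}).

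For the positive-definiteness criterion, $Q_{\vec{c}}$ is positive definite iff its restriction to each of the four invariant summands is positive definite. The restrictions to $\Sym_m(b)$, $\Sym_m(d)$ and $\bsV_0$ yield the conditions $c_2>0$ and $c_5>0$ (the latter twice). For the restriction to $\bsV_0^\perp$, evaluating the quadratic form at $\bv_0$ gives $\langle\bar{\eQ}_{\vec{c}}\bv_0,\bv_0\rangle=c_1$, so $c_1>0$ is necessary. To pin down the second necessary-and-sufficient condition, I would write the symmetric $2\times 2$ matrix of the quadratic form on $\bsV_0^\perp$ in the (non-orthonormal) basis $\bv_0,\bv_1$; using $|\bv_0|^2=1$, $|\bv_1|^2=m-1$ a direct computation gives
\[
M=\begin{pmatrix} c_1 & (m-1)c_3 \\ (m-1)c_3 & (m-1)(c_5+(m-1)c_4)\end{pmatrix},\qquad \det M=(m-1)\det\Sigma_{\vec{c}}.
\]
Sylvester's criterion then gives positive definiteness on $\bsV_0^\perp$ iff $c_1>0$ and $\det\Sigma_{\vec{c}}>0$, completing the proof.

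The only mild subtlety to keep in mind is the distinction between the matrix of $\bar{\eQ}_{\vec{c}}|_{\bsV_0^\perp}$ viewed as a linear operator (whose determinant $\det\Sigma_{\vec{c}}$ is basis-invariant) and the matrix of the associated quadratic form in the non-orthonormal basis $\bv_0,\bv_1$ (which picks up a Gram factor of $m-1$). This factor is irrelevant for the sign condition stated in the proposition, but it would introduce a spurious $(m-1)$ if one tried to assemble $\det\eQ_{\vec{c}}$ from quadratic-form matrices rather than linear-operator matrices.
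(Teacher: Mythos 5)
Your proof is correct and follows the same decomposition the paper sets up in the paragraphs preceding the proposition, where the $\qed$ is appended without any further argument; you are simply filling in the bookkeeping the author leaves implicit. You also correctly flag the point the paper relegates to a footnote: $\Sigma_{\vec{c}}$ is the matrix of the \emph{operator} on $\bsW_0$ in the non-orthonormal basis $(\bv_0,\bv_1)$ (so its determinant is basis-independent and contributes $\det\Sigma_{\vec{c}}$ to $\det Q_{\vec{c}}$ directly), whereas the matrix of the \emph{quadratic form} in that basis acquires the Gram factor $m-1$, which affects $\det M$ but not the sign condition in Sylvester's criterion.
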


\begin{remark}\label{rem: gmat} (a) The  matrix  $\Sigma_{\vec{c}}$ is similar to the symmetric matrix
\[
\hat{\Sigma}_{\vec{c}}=\left[
\begin{array}{ccc}
c_1 &  (m-1)^{\frac{1}{2}}c_3\\

(m-1)^{\frac{1}{2}}c_3 &  c_5 +(m-1)c_4
\end{array}
\right].
\]
Thus $\eQ_{\vec{c}}$ is positive definite iff $c_2, c_5>0$ and the symmetric matrix $\hat{\Sigma}_{\vec{c}}$ is positive definite.

\noindent (b)   The above proof  produced an orthogonal decomposition  of $\Sym_m$
\[
\Sym_m= \Sym_m(d)\oplus \Sym_m(b) \oplus \bsV,\;\;\bsV=\Sym_m(a)\oplus \Sym_m(c).
\]
In the proof  we used the  natural metric (\ref{nat-met}) on $\Sym_m$ to identify $Q_{\vec{c}}$ with a symmetric operator $\Sym_m\to\Sym_m$. The three factors above  are invariant  subspaces of this operator.  The restriction of $Q_{\vec{c}}$ to $\Sym_m(d)$ is $c_5\one$, while the restriction of $Q_{\vec{c}}$ to $\Sym_m(b)$ is $c_2\one$.  The subspace $\bsV$  decomposes further into two invariant subspaces:  the two-dimensional  subspace
\[
\bsW_0= \Sym_m(a)\oplus \spa\one_{m-1}=\bR\oplus \spa{\one}_{m-1},\;\;\one_{m-1}\in\Sym_m(c),
\]
and its orthogonal complement $\bsV_0$. The restriction  of $Q_{\vec{c}}$ to $\bsV_0$ is $c_5\one_{\bsV_0}$ while  the restriction to  $\bsW_0$ is  described in the canonical orthonormal basis
\[
1\oplus 0,\;\;0\oplus\frac{1}{\sqrt{m-1}}\one_{m-1}
\]
by the matrix  $\hat{\Sigma}_{\vec{c}}$.

\noindent (c) We denote by $\eT_\eta$ the space of $O_\eta(m)$-equivariant symmetric operators 
\[
T:\Sym_m\to \Sym_m
\]
The above discussion  shows that any $T\in\eT_\eta$ enjoys the following properties.

 \begin{itemize}
 
 \item  Each of the subspaces $\Sym_m(d)$, $\Sym_m(b)$, $\bsV_0$ and $\bsW_0$  are $T$-invariant, where $\bsV_0$ and $\bsW_0$ are defined as in (b). 
 
 \item There exist two real constants $\alpha,\beta$ such that the restriction of $T$ to $\Sym_m(b)$ is $\alpha\one$, while the restriction of $T$ to $\Sym_m(d)\oplus \bsV_0$ is $\beta\one$. ($\alpha=c_2$, $\beta=c_5$.)
 
 \end{itemize}
 
 We denote by $\hat{\Sigma}_T$ the restriction of $T$ to $\bsW_0$. We see that an operator $T\in\eT_\eta$ is determined by a triplet $(\alpha,\beta, S)$ where $\alpha,\beta\in\bR$ and $S:\bsW_0\to\bsW_0$ is a symmetric operator. We  will denote by $T_{\alpha,\beta, S}$ the operator associated to this triplet. Note also that $T_{\alpha,\beta,S}$ is invertible iff $\alpha\beta\det S\neq 0$ and
\[
T_{\alpha,\beta,S}^{-1}= T_{\alpha^{-1},\beta^{-1},S^{-1}}.
\]
A matrix $A\in \Sym_m$ has the  block form  (\ref{eq: block})
\[
A=\left[
\begin{array}{cc}
a_{11} & L^\dag\\
L & B
\end{array}
\right]
\]
we further decompose $B$ as a sum
\[
B:=\frac{c}{\sqrt{m-1}}\one_{m-1}+ B_0,\;\;\tr B_0 =0,
\]
so that $c\sqrt{m-1}=\tr B$.  We  we will refer to this matrix using the notation $A=A(a_{11}, L, c, B_0)$.  Then
\[
T_{a,b,S} A(a_{11}, L,c ,B_0)= A(a_{11}', L',c' ,B_0'),
\]
where
\[
B_0'=\beta B_0,\;\;L'= \alpha L,\;\;\left[
\begin{array}{cc}
a_{11}' \\
c'
\end{array}
\right]= S \left[
\begin{array}{cc}
a_{11} \\
c
\end{array}
\right].\proofend
\]
\end{remark}

\end{document}